\begin{document}
\title[Generalized letterplace ideals]
{Poset ideals of $P$-partitions and generalized
letterplace and determinantal ideals}

\author{Gunnar Fl{\o}ystad}
\address{Matematisk institutt\\
        Universitetet i Bergen\\ 
        Postboks 7803\\
        5020 Bergen} 
\email{gunnar@mi.uib.no}


\subjclass[2010]{Primary: 13F55, 05E40, Secondary: 13C40,14M12}
\date{\today}


\theoremstyle{plain}
\newtheorem{theorem}{Theorem}[section]
\newtheorem{corollary}[theorem]{Corollary}
\newtheorem*{main}{Main Theorem}
\newtheorem{lemma}[theorem]{Lemma}
\newtheorem{proposition}[theorem]{Proposition}
\newtheorem{conjecture}[theorem]{Conjecture}

\theoremstyle{definition}
\newtheorem{definition}[theorem]{Definition}
\newtheorem{fact}[theorem]{Fact}

\theoremstyle{remark}
\newtheorem{notation}[theorem]{Notation}
\newtheorem{remark}[theorem]{Remark}
\newtheorem{example}[theorem]{Example}
\newtheorem{claim}{Claim}


\newcommand{\psp}[1]{{{\bf P}^{#1}}}
\newcommand{\psr}[1]{{\bf P}(#1)}
\newcommand{\op}{{\mathcal O}}
\newcommand{\opw}{\op_{\psr{W}}}
\newcommand{\go}{\op}

\newcommand{\ini}[1]{\text{in}(#1)}
\newcommand{\gin}[1]{\text{gin}(#1)}
\newcommand{\kr}{{\Bbbk}}
\newcommand{\pd}{\partial}
\newcommand{\vardel}{\partial}
\renewcommand{\tt}{{\bf t}}


\newcommand{\coh}{{{\text{{\rm coh}}}}}


\newcommand{\modv}[1]{{#1}\text{-{mod}}}
\newcommand{\modstab}[1]{{#1}-\underline{\text{mod}}}

\newcommand{\sut}{{}^{\tau}}
\newcommand{\sumit}{{}^{-\tau}}
\newcommand{\til}{\thicksim}

\newcommand{\totp}{\text{Tot}^{\prod}}
\newcommand{\dsum}{\bigoplus}
\newcommand{\dprod}{\prod}
\newcommand{\lsum}{\oplus}
\newcommand{\lprod}{\Pi}

\newcommand{\La}{{\Lambda}}

\newcommand{\sirstj}{\circledast}

\newcommand{\she}{\EuScript{S}\text{h}}
\newcommand{\cm}{\EuScript{CM}}
\newcommand{\cmd}{\EuScript{CM}^\dagger}
\newcommand{\cmri}{\EuScript{CM}^\circ}
\newcommand{\cler}{\EuScript{CL}}
\newcommand{\clerd}{\EuScript{CL}^\dagger}
\newcommand{\clerri}{\EuScript{CL}^\circ}
\newcommand{\gor}{\EuScript{G}}
\newcommand{\cF}{\mathcal{F}}
\newcommand{\cG}{\mathcal{G}}
\newcommand{\cH}{\mathcal{H}}
\newcommand{\cM}{\mathcal{M}}
\newcommand{\cE}{\mathcal{E}}
\newcommand{\cD}{\mathcal{D}}
\newcommand{\cI}{\mathcal{I}}
\newcommand{\cP}{\mathcal{P}}
\newcommand{\cK}{\mathcal{K}}
\newcommand{\cL}{\mathcal{L}}
\newcommand{\cS}{\mathcal{S}}
\newcommand{\cC}{\mathcal{C}}
\newcommand{\cO}{\mathcal{O}}
\newcommand{\cJ}{\mathcal{J}}
\newcommand{\cU}{\mathcal{U}}
\newcommand{\cR}{\mathcal{R}}
\newcommand{\cQ}{\mathcal{Q}}
\newcommand{\mm}{\mathfrak{m}}

\newcommand{\dlim} {\varinjlim}
\newcommand{\ilim} {\varprojlim}

\newcommand{\CM}{\text{CM}}
\newcommand{\Mon}{\text{Mon}}


\newcommand{\Kom}{\text{Kom}}


\newcommand{\EH}{{\mathbf H}}
\newcommand{\res}{\text{res}}
\newcommand{\Hom}{\text{Hom}}
\newcommand{\inhom}{{\underline{\text{Hom}}}}
\newcommand{\Ext}{\text{Ext}}
\newcommand{\Tor}{\text{Tor}}
\newcommand{\ghom}{\mathcal{H}om}
\newcommand{\gext}{\mathcal{E}xt}
\newcommand{\id}{\text{{id}}}
\newcommand{\im}{\text{im}\,}
\newcommand{\codim} {\text{codim}\,}
\newcommand{\resol}{\text{resol}\,}
\newcommand{\rank}{\text{rank}\,}
\newcommand{\lpd}{\text{lpd}\,}
\newcommand{\coker}{\text{coker}\,}
\newcommand{\supp}{\text{supp}\,}
\newcommand{\Ad}{A_\cdot}
\newcommand{\Bd}{B_\cdot}
\newcommand{\Fd}{F_\cdot}
\newcommand{\Gd}{G_\cdot}


\newcommand{\sus}{\subseteq}
\newcommand{\sups}{\supseteq}
\newcommand{\pil}{\rightarrow}
\newcommand{\vpil}{\leftarrow}
\newcommand{\rpil}{\leftarrow}
\newcommand{\lpil}{\longrightarrow}
\newcommand{\inpil}{\hookrightarrow}
\newcommand{\pils}{\twoheadrightarrow}
\newcommand{\projpil}{\dashrightarrow}
\newcommand{\dotpil}{\dashrightarrow}
\newcommand{\adj}[2]{\overset{#1}{\underset{#2}{\rightleftarrows}}}
\newcommand{\mto}[1]{\stackrel{#1}\longrightarrow}
\newcommand{\vmto}[1]{\stackrel{#1}\longleftarrow}
\newcommand{\mtoelm}[1]{\stackrel{#1}\mapsto}
\newcommand{\bihom}[2]{\overset{#1}{\underset{#2}{\rightleftarrows}}}
\newcommand{\eqv}{\Leftrightarrow}
\newcommand{\impl}{\Rightarrow}

\newcommand{\iso}{\cong}
\newcommand{\te}{\otimes}
\newcommand{\into}[1]{\hookrightarrow{#1}}
\newcommand{\ekv}{\Leftrightarrow}
\newcommand{\equi}{\simeq}
\newcommand{\isopil}{\overset{\cong}{\lpil}}
\newcommand{\equipil}{\overset{\equi}{\lpil}}
\newcommand{\ispil}{\isopil}
\newcommand{\vvi}{\langle}
\newcommand{\hvi}{\rangle}
\newcommand{\susneq}{\subsetneq}
\newcommand{\sgn}{\text{sign}}


\newcommand{\xd}{\check{x}}
\newcommand{\ortog}{\bot}
\newcommand{\tL}{\tilde{L}}
\newcommand{\tM}{\tilde{M}}
\newcommand{\tH}{\tilde{H}}
\newcommand{\tvH}{\widetilde{H}}
\newcommand{\tvh}{\widetilde{h}}
\newcommand{\tV}{\tilde{V}}
\newcommand{\tS}{\tilde{S}}
\newcommand{\tT}{\tilde{T}}
\newcommand{\tR}{\tilde{R}}
\newcommand{\tf}{\tilde{f}}
\newcommand{\ts}{\tilde{s}}
\newcommand{\tp}{\tilde{p}}
\newcommand{\tr}{\tilde{r}}
\newcommand{\tfst}{\tilde{f}_*}
\newcommand{\empt}{\emptyset}
\newcommand{\bfa}{{\mathbf a}}
\newcommand{\bfb}{{\mathbf b}}
\newcommand{\bfd}{{\mathbf d}}
\newcommand{\bfl}{{\mathbf \ell}}
\newcommand{\bfx}{{\mathbf x}}
\newcommand{\bfm}{{\mathbf m}}
\newcommand{\bfv}{{\mathbf v}}
\newcommand{\bft}{{\mathbf t}}
\newcommand{\la}{\lambda}
\newcommand{\bfen}{{\mathbf 1}}
\newcommand{\bfe}{{\mathbf 1}}
\newcommand{\ep}{\epsilon}
\newcommand{\en}{r}
\newcommand{\tu}{s}
\newcommand{\Sym}{\text{Sym}}

\newcommand{\ome}{\omega_E}

\newcommand{\bevis}{{\bf Proof. }}
\newcommand{\demofin}{\qed \vskip 3.5mm}
\newcommand{\nyp}[1]{\noindent {\bf (#1)}}
\newcommand{\demo}{{\it Proof. }}
\newcommand{\demodone}{\demofin}
\newcommand{\parg}{{\vskip 2mm \addtocounter{theorem}{1}  
                   \noindent {\bf \thetheorem .} \hskip 1.5mm }}

\newcommand{\lcm}{{\text{lcm}}}


\newcommand{\dl}{\Delta}
\newcommand{\cdel}{{C\Delta}}
\newcommand{\cdelp}{{C\Delta^{\prime}}}
\newcommand{\dlst}{\Delta^*}
\newcommand{\Sdl}{{\mathcal S}_{\dl}}
\newcommand{\lk}{\text{lk}}
\newcommand{\lkd}{\lk_\Delta}
\newcommand{\lkp}[2]{\lk_{#1} {#2}}
\newcommand{\del}{\Delta}
\newcommand{\delr}{\Delta_{-R}}
\newcommand{\dd}{{\dim \del}}
\newcommand{\Del}{\Delta}

\renewcommand{\aa}{{\bf a}}
\newcommand{\bb}{{\bf b}}
\newcommand{\cc}{{\bf c}}
\newcommand{\xx}{{\bf x}}
\newcommand{\yy}{{\bf y}}
\newcommand{\zz}{{\bf z}}
\newcommand{\mv}{{\xx^{\aa_v}}}
\newcommand{\mF}{{\xx^{\aa_F}}}

\newcommand{\Symm}{\text{Sym}}
\newcommand{\pnm}{{\bf P}^{n-1}}
\newcommand{\opnm}{{\go_{\pnm}}}
\newcommand{\ompnm}{\omega_{\pnm}}

\newcommand{\pn}{{\bf P}^n}
\newcommand{\hele}{{\mathbb Z}}
\newcommand{\nat}{{\mathbb N}}
\newcommand{\rasj}{{\mathbb Q}}
\newcommand{\bfone}{{\mathbf 1}}

\newcommand{\dt}{\bullet}
\newcommand{\disk}{\scriptscriptstyle{\bullet}}

\newcommand{\cxF}{F_\dt}
\newcommand{\pol}{f}

\newcommand{\Rn}{{\mathbb R}^n}
\newcommand{\An}{{\mathbb A}^n}
\newcommand{\frg}{\mathfrak{g}}
\newcommand{\PW}{{\mathbb P}(W)}

\newcommand{\pos}{{\mathcal Pos}}
\newcommand{\g}{{\gamma}}

\newcommand{\Vaa}{V_0}
\newcommand{\Bp}{B^\prime}
\newcommand{\Bpp}{B^{\prime \prime}}
\newcommand{\bbp}{\mathbf{b}^\prime}
\newcommand{\bbpp}{\mathbf{b}^{\prime \prime}}
\newcommand{\bp}{{b}^\prime}
\newcommand{\bpp}{{b}^{\prime \prime}}

\newcommand{\oLa}{\overline{\Lambda}}
\newcommand{\ov}[1]{\overline{#1}}
\newcommand{\ovv}[1]{\overline{\overline{#1}}}
\newcommand{\tm}{\tilde{m}}
\newcommand{\po}{\bullet}

\def\CC{{\mathbb C}}
\def\GG{{\mathbb G}}
\def\ZZ{{\mathbb Z}}
\def\NN{{\mathbb N}}
\def\RR{{\mathbb R}}
\def\OO{{\mathbb O}}
\def\QQ{{\mathbb Q}}
\def\VV{{\mathbb V}}
\def\PP{{\mathbb P}}
\def\EE{{\mathbb E}}
\def\FF{{\mathbb F}}
\def\AA{{\mathbb A}}

\renewcommand{\op}{{\text{op}}}

\begin{abstract}
For any finite poset $P$ we have the poset of isotone maps
$\Hom(P,\NN)$, also called $P^{\op}$-partitions. 
To any poset ideal $\cJ$ in $\Hom(P,\NN)$, finite or infinite,
we associate monomial
ideals: the letterplace ideal $L(\cJ,P)$ and the Alexander dual co-letterplace
ideal $L(P,\cJ)$, and study them. We derive a class of 
monomial ideals in $\kr[x_p, p \in P]$ called $P$-stable. 
When $P$ is a chain we establish a duality
on strongly stable ideals. 
We study the case when $\cJ$ is a principal poset ideal. 
When $P$ is a chain we construct
a new class of determinantal ideals which generalizes ideals of 
{\it maximal} minors
and whose initial ideals are letterplace ideals of principal poset ideals.
\end{abstract}

\maketitle

\newcommand{\Supp}{\text{Supp}}
\newcommand{\hull}{\text{hull}}
\renewcommand{\cF}{{\mathcal F}}
\newcommand{\fp}{{\mathfrak p}}
\newcommand{\oI}{\overline{I}}
\newcommand{\hI}{\hat{I}}

\section*{Introduction}
For a finite poset $P$ the isotone maps $P \pil \NN$ form a 
poset $\Hom(P,\NN)$ where two maps $\phi \leq \psi$ if 
$\phi(p) \leq \psi(p)$ for every $p \in P$. Denoting by $P^{\op}$ the
opposite poset, $\Hom(P,\NN)$ identifies with {\it $P^{\op}$-partitions},
originally introduced and studied by R.Stanley in the classic \cite{StaPP},
see also \cite[Section 4.5]{StaEnu}. 
(For a poset $Q$ a $Q$-partition is a map $Q \pil \NN$ such that
$q_1 \leq q_2$ implies $\phi(q_1) \geq \phi(q_2)$.)
In \cite{FGH} the author together with B.Greve and J.Herzog introduced
letterplace and co-letterplace ideals, monomial ideals in a polynomial
ring, associated to poset ideals in 
$\Hom(P,[n])$ where $[n] = \{1 < 2 < \cdots < n \}$ is the chain. These
identify naturally as the finite poset ideals in $\Hom(P,\NN)$.  
Here we generalize this by associating letterplace and 
co-letterplace ideals to any poset ideal $\cJ$ of $\Hom(P, \NN)$ or,
alternatively formulated, to any poset ideal in the poset of 
$P^{\op}$-partitions. 

\noindent{\bf Letterplace and co-letterplace ideals.} To any isotone map 
$\phi : P \pil \NN$ there is associated two natural subsets of $P \times \NN$. 
The first is the graph $\Gamma \phi$, the second we call the {\it ascent} 
$\Lambda \phi$ and is
$\{ (p,i) \, | \, \phi(q) \leq i < \phi(p) \text{ for every } q < p \}$. Denote by 
$\kr[x_{P \times \NN}]$ the polynomial ring in the infinite number
of variables $x_{p,i}$ where $p \in P$ and $i \in \NN$. The co-letterplace ideal
associated to a finite poset ideal $\cJ \sus \Hom(P,\NN)$ is a monomial ideal
$L(P,\cJ)$ in $\kr[x_{P \times \NN}]$ whose generators are given by monomials 
associated to
graphs $\Gamma \phi \sus P \times \NN$  for $\phi$ in $\cJ$.
For a general poset ideal $\cJ$ the generators correspond to graphs 
of certain isotone maps $\phi : I \pil \NN$ where the $I$ are poset ideals 
in $P$. The letterplace ideal 
$L(\cJ,P)$ in $\kr[x_{P \times \NN}]$ has generators given by the
ascents $\Lambda \phi$ of $\phi$'s in the complement filter $\cJ^c$
of $\Hom(P,\NN)$. These monomial ideals are Alexander dual,
Proposition \ref{pro:defiAlexdual}. Furthermore these ideals have finite
minimal generating sets, so if we let $S$ be the set of all variables 
occurring in these generators, the support of these ideals,
we can consider these monomial ideals to be in a finitely
generated polynomial ring $\kr[x_S]$. 

\noindent{\bf Regular quotients.} If $S \sus P \times \NN$ is the support, 
an isotone map $\psi : S \pil R$ gives a map of polynomial rings
$\kr[x_S] \pil \kr[x_R]$ given by dividing $\kr[x_S]$ by a regular sequence
consisting of differences $x_s - x_{s^\prime}$ where $\phi(s) = \phi(s^\prime)$.
We give conditions, Theorems \ref{thm:regLP} and \ref{thm:regCOLP}, 
such that this is a regular sequence for $\kr[x_S]/L(\cJ,P)$ resp.
$\kr[x_S]/L(P,\cJ)$. This generalizes the main results of \cite{FGH} 
where we by many examples show the omnipresence of letter- and co-letterplace
ideals in the literature on monomial ideals.

\noindent{\bf $P$-stable ideals in $\kr[x_P]$.} In particular the projection
$S \sus P \times \NN \mto{p} P$ fulfill these conditions for the letterplace
ideal $L(\cJ,P)$. Hence we get ideals $L^p(\cJ,P) \sus \kr[x_P]$ 
whose quotient ring is a regular quotient of $\kr[x_S]/L(\cJ,P)$. 
When $P$ is the antichain these give  all the monomial ideals
in $\kr[x_P]$, and $L(\cJ,P)$ is the standard polarization of the monomial
ideal $L^p(\cJ,P) \sus \kr[x_P]$. 
When $P = [m]$ is the chain, these give precisely the 
strongly stable ideals in $\kr[x_{[m]}]$. For general $P$ we call such ideals 
$P$-stable and we give conditions for ideals to fulfill this notion.
The ideals $L(\cJ,P)$ will then be non-standard polarizations of
$L^p(\cJ,P)$. 
The notion of $P$-stable is not
the same as the notion of $P$-Borel in \cite{FMS}. Rather the notion
of $P$-stable is more subtle. For instance a power of the graded maximal
ideal $\mm^n$ in $\kr[x_P]$, with $n \geq 2$, is $P$-stable 
iff $P$ is the disjoint union of rooted trees with the roots on top.

\noindent{\bf Strongly stable ideals.} In case $P$ is the chain, we show the Alexander
duality of $L(\cJ,P)$ and $L(P,\cJ)$ induces a duality between strongly
stable ideals in $\kr[x_1, \ldots, x_m]$ and finitely generated $m$-regular 
strongly stable ideals in $\kr[x_{\NN}]$. In particular it gives a duality 
between $n$-regular strongly stable ideals in $\kr[x_1, \ldots, x_m]$ and
$m$-regular strongly stable ideals in $\kr[x_1, \ldots, x_n]$, which
seemingly has not been noticed before. These are joint results with
Alessio D'Ali and Amin Nematbakhsh. 

\noindent{\bf Principal poset ideals.} A distinguished class of poset ideals of 
$\Hom(P,\NN)$ arises by considering an isotone map $\alpha : P \pil \NN$ and
letting $\cJ$ be the set of all $\phi$ such that $\phi \leq \alpha$.
These give the {\it principal} letterplace ideals $L(\alpha,P)$ and
co-letterplace ideals $L(P,\alpha)$. The ideals $L(n,P)$ and $L(P,n)$
of \cite{FGH} 
are the special cases when $\alpha$ is the constant map sending each 
$p$ to $n$. When $P$ is the antichain $L(\alpha,P)$ is a complete
intersection, and these ideals may therefore be considered as generalizations
of complete intersections.

\noindent{\bf Determinantal ideals.} A particular case of principal letterplace
ideals is when $P$ is the chain $[m]$. In \cite{FGH} we showed that
$L(n,[m])$ is the initial ideal of the ideal of maximal minors of an
$(m+n-1)\times n$-matrix. Here we show that $L(\alpha, [m])$ is also an
initial ideal of a class of determinantal ideals. This is a class which
seemingly has not been considered before, and which in a natural way
generalizes the determinantal ideals of {\it maximal minors}. All these
ideals are Cohen-Macaulay of codimension $m = |P|$.


\medskip
The organization of the paper is as follows. In Section \ref{sec:defi}
we define the (generalized) letterplace ideal $L(\cJ,P)$ and the co-letterplace
ideal $L(P,\cJ)$ and show they are Alexander dual.
Section \ref{sec:reg} gives the conditions ensuring that we divide
out by a regular sequence of variable differences for the quotient
rings.  Section \ref{sec:monXP}
gives the conditions for a monomial ideal in $\kr[x_P]$ to be $P$-stable.
Principal letterplace ideals are introduced in Section \ref{sec:fin}
and we show how the initial ideals of certain classes of determinantal
ideals studied by W.Bruns and U.Vetter in \cite{BrVe} and J.Herzog and
N.Trung in \cite{HeTr} are regular quotients of principal letterplace ideals.
Section \ref{sec:detmax} gives the
class of determinantal ideals generalizing ideals of maximal minors.
In Section \ref{sec:ststable} we show the duality on strongly stable ideals,
and the last Section \ref{sec:proof} gives the proofs of the statements
of Section \ref{sec:reg} on regular sequences.

\section{The poset of isotone maps to $\NN$}
\label{sec:defi}

We first give various notions and results concerning order preserving
maps from a poset $P$ to the integers $\NN$. Then we define letterplace
and co-letterplace ideals in a more general setting than in the previous
article \cite{FGH}.

\subsection{Graphs and bases of isotone maps}

If $P$ and $Q$ are posets, an isotone map
$\phi: P \pil Q$ is a map such that $p \leq p^\prime$ implies
$\phi(p) \leq \phi(p^\prime)$. The set of isotone map is denoted
$\Hom(P,Q)$ and is itself a poset with $\psi \leq \phi$ if 
$\psi(p) \leq \phi(p)$ for every $p \in P$. Thus the category of posets
has an internal Hom.
We also have the product poset $P \times Q$.
This makes the category of posets into a symmetric monoidal
closed category. A subset $J \sus Q$ is a {\it poset ideal} if 
$q \in J$ and $q^\prime \leq q$ implies $q^\prime \in J$. A subset 
$F \sus Q$ is a {\it poset filter} if $q \in F$ and $q \leq q^\prime$
implies $q^\prime \in F$. Note that $J$ is a poset ideal, iff its
complement $J^c$ is a poset filter.

Denote by $\NN = \{0,1,2,\cdots\}$ the natural numbers including $0$.
Henceforth we shall always assume we have a {\it finite} partially 
ordered set $P$. 
We get the poset $\Hom(P,\NN)$.

\begin{remark} $\Hom(P,\NN)$ is also a monoid coming from addition on $\NN$:
$\phi + \psi$ is the map sending $p$ to $\phi(p) + \psi(p)$. 
Denote by $\hat{P} = P \cup \{1\}$ where $1$ is a maximal element, so
$1 > p$ for any $p \in  P$. Then the semi-group ring  $\kr \Hom(\hat{P},\NN)$ 
is the Hibi ring
of the distributive lattice $D(P)$ of order ideals in $P$. See
V.Ene \cite{Ene} for a survey on Hibi rings. The semi-group ring 
$\kr \Hom(P,\NN)$ was first studied by A.Garcia in \cite{Garsia} and 
more recently by  V.Feray and V.Reiner in \cite{FeRe}. 
We shall in this article only 
use the poset structure on $\Hom(P,\NN)$ and not the monoid structure.
\end{remark}

\begin{lemma}[Generalized Dickson's lemma] \label{lem:defiFinmin}
For a finite poset $P$, 
any subset $S$ of $\Hom(P, \NN)$ has a finite set of minimal elements.
\end{lemma}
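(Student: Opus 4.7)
The plan is to reduce the statement directly to the classical Dickson's lemma on $\NN^m$. Since $P$ is finite with, say, $|P| = m$, fix any enumeration $P = \{p_1, \ldots, p_m\}$. The assignment $\phi \mapsto (\phi(p_1), \ldots, \phi(p_m))$ gives an injection
\[ \iota : \Hom(P,\NN) \hookrightarrow \NN^m \]
which, by the very definition of the order on $\Hom(P,\NN)$, is both order-preserving and order-reflecting: $\phi \leq \psi$ in $\Hom(P,\NN)$ if and only if $\iota(\phi) \leq \iota(\psi)$ in $\NN^m$ with the product order. Consequently, for any $S \sus \Hom(P,\NN)$, the minimal elements of $S$ computed in $\Hom(P,\NN)$ coincide with the minimal elements of $\iota(S)$ computed in $\NN^m$. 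So it suffices to show that every subset $T \sus \NN^m$ has only finitely many minimal elements under the componentwise order.

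I would prove this classical fact by induction on $m$. The base case $m=1$ is immediate since $\NN$ is well-ordered, so $T$ has a unique minimum. For the inductive step, pick any element $t = (t_1, \ldots, t_m) \in T$. Any $u \in T$ that is not $\geq t$ satisfies $u_i < t_i$ for some coordinate $i$. For each $i \in \{1, \ldots, m\}$ and each $k \in \{0,1,\ldots, t_i - 1\}$, the slice
\[ T_{i,k} = \{ u \in T \, | \, u_i = k \} \]
may be viewed as a subset of $\NN^{m-1}$ (by deleting the $i$-th coordinate), and has finitely many minimal elements by the inductive hypothesis. Since any minimal element of $T$ distinct from $t$ must lie in some $T_{i,k}$ and be minimal there, the minimal elements of $T$ are contained in the finite union
\[ \{t\} \cup \bigcup_{i=1}^{m} \bigcup_{k=0}^{t_i - 1} \min(T_{i,k}), \]
which is finite.

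The argument has essentially no obstacle: the only point requiring a moment's thought is the observation that the order on $\Hom(P,\NN)$ really is the restriction of the product order on $\NN^P$, not a coarser order, so that minimality is preserved under $\iota$. Once this is noted, the lemma is Dickson's lemma verbatim.
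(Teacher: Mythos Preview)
Your proof is correct. The key observation---that the pointwise order on $\Hom(P,\NN)$ is literally the restriction of the product order on $\NN^P$, so minimality is computed the same way in both---immediately reduces the lemma to the classical Dickson lemma, and your inductive proof of the latter (pick any $t$, then every other minimal element has some coordinate strictly below $t$'s and hence lies in one of finitely many hyperplane slices) is standard and sound.

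The paper takes a slightly different route. It works directly in $\Hom(P,\NN)$ and inducts on $|P|$ by removing a \emph{maximal} element $q$ of $P$ (so that restrictions to $P\setminus\{q\}$ remain isotone), projects $S$ down, applies induction to get finitely many minimal projections $\alpha_i$, and uses these to produce a bound $N$ on the $q$-coordinate beyond which nothing new can be minimal; it then slices on the $q$-value up to $N$. So both arguments are induct-and-slice, but the paper first projects and uses the minimal elements of the projection to bound the remaining coordinate, whereas you first fix a reference point and slice on whichever coordinate is small. Your reduction to $\NN^m$ is the cleaner move: it strips away the isotonicity constraint at the outset and lets you quote Dickson verbatim, while the paper's version carries the poset structure through the induction unnecessarily.
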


\begin{proof}
Let $q \in P$ be a maximal element. 
The inclusion  $P\backslash \{q \} \hookrightarrow P$ induces a
projection $\Hom(P,\NN) \pil \Hom(P \backslash \{q \}, \NN)$. Let
$T$ be the image of $S$ under this projection. By induction
$T$ has a finite set of minimal elements (maps): $\alpha_1, \ldots, \alpha_r$.
Consider the $\phi$ such that the restriction $\phi_{|P\backslash{q}} = \alpha_i$,
and let $n_i$ be the minimal value of $\phi(q)$ for these $\phi$.
Let $N$ be the maximum of
the $n_i$. Each $\alpha_i$ extends to a map $\beta_i \in S$ 
with $\beta_i(q) \leq N$.

For $j \leq N$ let $S_j \sus S$ be the set of all $\phi \in S$
such that $\phi(q) = j$, and let $T_j$ be its projection
onto $\Hom(P \backslash \{q \}, \NN)$. By induction, each $T_j$
for $j = 1, \ldots, N$ has a finite set of minimal elements $T_j^{\min}$.
Extending $T_j^{\min}$ naturally to $S$ we get a set $S_j^{\min} \sus S$

If now $\phi \in S$, then
either $\phi(q) \geq N$, and so $\phi \geq \beta_i$ for some
$\beta_i$, or $\phi(q) = j \leq N$ and then $\phi \geq \beta$
for some $\beta \in S_j^{\min}$. Thus $S$ has a finite set of minimal
elements.
\end{proof}
 
\begin{definition} 
Let $\phi : P \pil \NN$ be an isotone map. The {\it graph} of
$\phi$ is 
\[ \Gamma \phi = \{ (p,i) \, | \, \phi(p) = i \}. \]
The {\it ascent} of $\phi$ is
\[ \Lambda\phi = \{ (p,i) \, | \, \phi(q) \leq i < \phi(p) \text{ for all }
q < p \}.\]
Both the graph and the ascent are subsets of $P \times \NN$.
\end{definition} 

\begin{remark}
The union $\Gamma \phi \cup \Lambda \phi$ is denoted $T_* \phi$ in 
\cite[Section2]{DFN-COLP}. The intervals $[\Gamma \phi, T_* \phi]$ index
the resolution of the co-letterplace ideals $L(P,n;\cJ)$, see 
\cite[Subsection 4.3]{DFN-COLP}. This generalizes the Eliahou-Kervaire
resolutions of strongly stable ideals generated in a single degree.
\end{remark}

\begin{lemma}
Let $\cF \sus \Hom(P, \NN)$ be a filter. 
The set of all ascents $\Lambda\phi$ where
$\phi \in \cF$, has a finite set of inclusion minimal elements, i.e.
which are minimal for the partial order of inclusion on sets.
\end{lemma}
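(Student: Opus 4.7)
The plan is to encode inclusion of ascents as a partial order $\sqsubseteq$ on $\Hom(P,\NN)$, reduce to principal filters via Lemma~\ref{lem:defiFinmin}, and bound the $\sqsubseteq$-minimal elements uniformly by a truncation argument. Observe first that the map $\phi \mapsto \Lambda\phi$ is injective: one recovers $\phi(p)$ inductively along a linear extension of $P$ as the unique $b \geq a_p(\phi) := \max_{q<p}\phi(q)$ with $\Lambda\phi \cap (\{p\} \times \NN) = \{p\} \times [a_p(\phi), b)$. Defining $\phi \sqsubseteq \psi$ to mean $\Lambda\phi \sus \Lambda\psi$, an induction on a linear extension shows $\phi \sqsubseteq \psi$ implies $\phi \leq \psi$ pointwise (when the ascent interval at $p$ is empty for $\phi$ one falls back on $\phi(p) = a_p(\phi) \leq a_p(\psi) \leq \psi(p)$). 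So the statement to prove becomes: $\cF$ has finitely many $\sqsubseteq$-minimal elements.

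Apply Lemma~\ref{lem:defiFinmin} to $\cF$ to obtain finitely many pointwise-minimal elements $\alpha_1,\ldots,\alpha_k$, so $\cF = \bigcup_j \cF_{\alpha_j}$ where $\cF_{\alpha_j} := \{\phi \in \Hom(P,\NN) : \phi \geq \alpha_j\} \sus \cF$ is principal. Any $\sqsubseteq$-minimal element of $\cF$ lies in some $\cF_{\alpha_j}$ and is $\sqsubseteq$-minimal there (a proper sub-ascent in that principal filter would also contradict minimality in $\cF$), so it suffices to show each $\cF_{\alpha_j}$ has finitely many such. Fix such a principal filter $\cF_\alpha$ and set $N := \max_{r \in P}\alpha(r)$. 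The main claim is that every $\sqsubseteq$-minimal $\phi \in \cF_\alpha$ satisfies $\phi(p) \leq N$ for every $p$; since $\{\phi : \phi(p) \leq N \text{ for all } p\}$ is finite, this yields the required bound.

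To prove the claim, given $\phi \in \cF_\alpha$ put $\phi'(r) := \min(\phi(r), N)$. Then $\phi'$ is isotone (pointwise minimum of two isotone functions), and $\alpha \leq \phi' \leq \phi$ since each $\alpha(r) \leq N$, so $\phi' \in \cF_\alpha$. Because truncation by a constant commutes with $\max$, one has $a_r(\phi') = \min(a_r(\phi), N)$; a three-case check on whether $\phi(r)$ and $a_r(\phi)$ exceed $N$ shows the ascent interval $[a_r(\phi'), \phi'(r))$ sits inside $[a_r(\phi), \phi(r))$ at every $r$: it is unchanged if $\phi(r) \leq N$, equal to $[a_r(\phi), N)$ if $\phi(r) > N \geq a_r(\phi)$, and empty if $a_r(\phi) > N$. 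Hence $\Lambda\phi' \sus \Lambda\phi$, and if $\phi(p) > N$ anywhere then $\phi'(p) = N < \phi(p)$, so $\phi' \neq \phi$; injectivity of $\Lambda$ then forces $\Lambda\phi' \subsetneq \Lambda\phi$, contradicting minimality of $\phi$.

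The main step to verify is the ascent-containment in the last paragraph, which rests on the identity $a_r(\phi') = \min(a_r(\phi), N)$ and the ensuing case analysis. Once these are in hand the argument assembles routinely from Lemma~\ref{lem:defiFinmin}.
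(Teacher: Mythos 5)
Your proof is correct and uses the same core idea as the paper: invoke Lemma~\ref{lem:defiFinmin} to obtain a bound $N$ from the minimal elements of $\cF$, then show that any $\psi \in \cF$ taking a value above $N$ has a non-minimal ascent by producing a smaller map in $\cF$. The paper decrements only the top level of $\psi$ by one and directly identifies the points removed from $\Lambda\psi$ (so it needs no injectivity statement), whereas you truncate all values at $N$ in a single step and appeal to the injectivity of $\phi \mapsto \Lambda\phi$ to conclude that the containment of ascents is strict; the organizational detour through principal filters $\cF_{\alpha_j}$ is harmless but not needed, since one may take $N$ to be the maximum over all the $\alpha_j$ at once.
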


\begin{proof}
Let $N$ be the maximum of all values $\phi(p)$ as $\phi$ ranges 
over the minimal elements of $\cF$ (which is a finite set by 
the previous Lemma \ref{lem:defiFinmin}) and $p$ ranges over $P$. 
Given an isotone map $\psi$ in $\cF$, let 
\[ n = \max \psi = \max \{ \psi(p) \, | \, p \in P \}. \]

If $n > N$ we want to show that $\Lambda \psi$ is not inclusion
minimal among $\Lambda \phi$ where $\phi \in \cF$. 
Then any inclusion  minimal $\Lambda \phi$ for $\phi \in \cF$ will have
$\phi(p) \leq N$ for all $p \in P$, and so there is only a finite
number of such $\phi$.

So suppose $n > N$. Then $\psi$ is not minimal in $\cF$.
Let $F \sus P$ be the poset filter of all $p \in P$ with $\psi(p) = n$, and
$F_{\min}$ the minimal elements in $F$. Then the map

\[ \phi(p) = \begin{cases} \psi(p) & p \not \in F \\
                                 n-1 & p \in F
                  \end{cases}, \]
is also in $\cF$.
But then clearly $\Lambda \phi = \Lambda \psi \backslash 
\{ (p, n -1) \, | \, p \in F_{\min} \}$. This proves the statement.
\end{proof}

\begin{definition}
Let $\cJ \sus \Hom(P, \NN)$ be a poset ideal. A {\it marker} for $\cJ$
is given by a poset ideal $I \sus P$ and an isotone map 
$\alpha : I \pil \NN$ such that {\it every} isotone map 
$\phi : P \pil \NN$ with 
restriction $\phi_{|I} = \alpha$, is in $\cJ$. It is a {\it minimal marker}
if no restriction $\alpha_{|J}$ to a poset ideal $J$
properly contained in $I$, is a marker for $\cJ$. This means that
the graph of the marker  $\alpha$ 
is inclusion  minimal among graphs of markers for $\cJ$.
\end{definition} 

\begin{remark} When $P$ is an antichain, 
the notion of poset ideal in $\Hom(P, \NN)$ is close to
the notion of multicomplex studied in \cite[Section 9]{HePo}.
\end{remark}

\begin{proposition} Let $\cJ \sus \Hom(P, \NN)$ be a poset ideal. 
The (unique) set of minimal markers is a finite set.
\end{proposition}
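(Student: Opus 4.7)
The plan is to use the complement filter $\cJ^c \sus \Hom(P,\NN)$: by Lemma \ref{lem:defiFinmin} it has finitely many minimal elements $\psi_1, \ldots, \psi_r$. Let $N := \max \{ \psi_j(p) \, | \, 1 \leq j \leq r, \, p \in P \}$. I will show every minimal marker $(I, \alpha)$ satisfies $\alpha(p) \leq N - 1$ for all $p \in I$, which together with the finiteness of the set of poset ideals of the finite poset $P$ yields finiteness of the set of minimal markers.

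First I would establish the following characterization: an isotone map $\alpha : I \pil \NN$ on a poset ideal $I \sus P$ is a marker if and only if for every $j \in \{1, \ldots, r\}$ there is some $p \in I$ with $\alpha(p) < \psi_j(p)$. The nontrivial direction uses the following extension principle: whenever $\alpha(p) \geq \psi(p)$ for all $p \in I$ and $\psi \in \Hom(P,\NN)$ is arbitrary, one can extend $\alpha$ to an isotone $\phi : P \pil \NN$ with $\phi \geq \psi$, by defining $\phi$ on $P \setminus I$ inductively along a topological sort via $\phi(p) := \max(\{\psi(p)\} \cup \{\phi(q) : q < p\})$. The reverse direction is immediate: if some $p_j \in I$ gives $\alpha(p_j) < \psi_j(p_j)$ for each $j$, then every extension $\phi$ of $\alpha$ fails to dominate any $\psi_j$, hence lies in $\cJ$.

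Now suppose $(I, \alpha)$ is a minimal marker and $p \in I$ is a maximal element. Removing a maximal element from a poset ideal yields a poset ideal, so $I \setminus \{p\}$ is a poset ideal of $P$, and by minimality $\alpha|_{I \setminus \{p\}}$ fails to be a marker. The characterization then supplies some $j$ with $\alpha(q) \geq \psi_j(q)$ for every $q \in I \setminus \{p\}$. Since $\alpha$ itself is a marker, the characterization applied to $\alpha$ forces some point of $I$ to witness $\alpha(\cdot) < \psi_j(\cdot)$, and that point must be $p$; hence $\alpha(p) < \psi_j(p) \leq N$. For a non-maximal $q \in I$, picking any maximal $p \in I$ with $p \geq q$ and applying isotonicity gives $\alpha(q) \leq \alpha(p) \leq N - 1$.

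Consequently every minimal marker $(I, \alpha)$ has $\alpha$ taking values in $\{0, 1, \ldots, N - 1\}$, and $I$ ranges over the finite set of poset ideals of $P$, so the set of minimal markers is finite. The main step requiring care is the characterization in terms of the generators $\psi_j$ of the filter $\cJ^c$; once that equivalence is in hand, the uniform bound on values of $\alpha$ on a minimal marker is a direct consequence of minimality combined with isotonicity.
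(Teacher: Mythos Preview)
Your proof is correct and takes a genuinely different route from the paper's. The paper argues by contradiction: assuming infinitely many minimal markers with domain $P$, it finds a maximal element $q$ along which values are unbounded, applies Dickson's lemma to the restrictions to $P\setminus\{q\}$, and extracts a marker on a smaller domain, contradicting minimality; it then repeats the argument for each poset ideal $I\subseteq P$. Your argument is more direct and constructive: you apply Dickson's lemma once, to the complement filter $\cJ^c$, obtain its finitely many minimal elements $\psi_1,\ldots,\psi_r$, and then characterize markers as those $\alpha$ that strictly undercut each $\psi_j$ somewhere on their domain. From this you read off the explicit uniform bound $\alpha(p)\leq N-1$ for any minimal marker. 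The characterization you prove is a useful statement in its own right, and your approach yields the sharper conclusion that all minimal markers take values bounded by the values of the minimal generators of $\cJ^c$; the paper's proof gives no such explicit bound. One small point: your argument tacitly assumes $r\geq 1$ (equivalently $\cJ\neq\Hom(P,\NN)$) and $I\neq\emptyset$ when choosing a maximal element of $I$; both edge cases are trivial but worth a sentence.
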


\begin{proof}
We first consider minimal markers $\phi: P \pil \NN$, i.e. minimal
markers whose domain is $P$. 
Let $\cM$ be the set of such minimal markers.
Suppose $\cM$ is infinite. Then there is some maximal $q \in P$ 
such that $\phi(q)$ may become arbitrarily large for $\phi \in \cM$. 
If the set $\cM_q$ of restrictions $\phi_{| P \backslash \{q \}}$ where $\phi \in \cM$
is finite, then clearly one of these restricted maps is a marker,
contradicting that
$\phi$ is a minimal marker. So the set $\cM_q$ is infinite. But by the 
above Proposition \ref{lem:defiFinmin}
it contains a finite set of minimal elements. Then for at least
one of these minimal elements, $\alpha$, there is an infinite number of 
$\phi \in \cM$ such that $\phi_{| P \backslash \{q \}} \geq \alpha$ and
$\phi(q)$ is arbitrarily large.
But since $\cJ$ is a poset ideal, there must then be $\phi : P \pil \NN$ with 
$\phi_{| P \backslash \{q \}} =  \alpha$ and $\phi(q)$ arbitrarily large, 
contradicting that $\cM$ consists of minimal markers. 

  Hence the set of inclusion minimal markers whose domain is $P$, is
finite. Let $I$ be a poset ideal contained in $P$.
Let $\cJ_{|I}$ be the poset ideal of $\Hom(I,\NN)$ consisting of 
markers supported on $I$. 
By the argument as above, there is a finite number of minimal markers
whose domain is $I$. Since there is only a finite number of such $I$
there will be only a finite number of minimal markers for $\cJ$.
\end{proof}

\subsection{Letterplace and co-letterplace ideals}

For a set $S$ let $\kr[x_S]$ be the polynomial ring over a field $\kr$ 
in the variables
$x_s$ where $s \in S$. We shall in particular consider the polynomial
ring $\kr[x_{P \times \NN}]$, a polynomial ring in the infinite number
of variables $x_{p,i}$ where $p \in P, i \in \NN$. 

\begin{definition} \label{def:defiLP}
Let $\cJ \sus \Hom(P,\NN)$ be a poset ideal.
The {\it co-letterplace ideal} $L(P,\cJ)$ is the monomial ideal
in $\kr[x_{P \times \NN}]$ generated by the monomials 
$m_{\Gamma \alpha}$ associated to the graphs
of markers $\alpha$ of $\cJ$.
It is clearly the same as the ideal generated by the finite set of monomials 
associated to graphs of minimal markers.

The {\it letterplace ideal} $L(\cJ,P)$ is the monomial ideal in 
$\kr[x_{P \times \NN}]$ generated by the monomials $m_{\Lambda\phi}$ associated to
ascents of the isotone maps $\phi$ in the complement filter $\cJ^c$. 
It is clearly the same as the ideal generated by the finite set of monomials of 
inclusion minimal ascents.
\end{definition}

\begin{remark} Let $[n] = \{1 < 2 < \cdots < n \}$.
In \cite{FGH} the setting was a poset ideal $\cJ \sus \Hom(P,[n])$
and we defined letterplace and co-letterplace ideals 
$L(n,P;\cJ)$ and $L(P,n;\cJ)$ in the polynomial ring $\kr[x_{P \times [n]}]$. 
This corresponds to finite poset ideals
$\cJ$ in the definition  above, see Section \ref{sec:fin}.
In \cite{DFN-COLP} we showed that the Stanley-Reisner ideal 
$L(n,P;\cJ)$ defines a simplicial ball
and described precisely the Gorenstein ideal defining its boundary.
\end{remark} 

Recall that for two squarefree monomial  ideals $I$ and $J$ in a
polynomial ring $S$ then $I$ is said to be {\it Alexander dual} to $J$  if 
the monomials in $I$ are precisely 
the monomials in $S$ which have nontrivial common 
divisor with every monomial in $J$. Then $J$ will also be Alexander dual to 
$I$. 

The following generalizes \cite[Theorem 1.1]{EHM} and \cite[Prop.1.2]{FGH}.

\begin{proposition} \label{pro:defiAlexdual}
The letterplace ideal $L(\cJ,P)$ and the co-letterplace ideal $L(P,\cJ)$
are Alexander dual.
\end{proposition}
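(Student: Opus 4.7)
The plan is to verify directly the Alexander duality characterization: a squarefree monomial $x^T$ lies in $L(\cJ,P)$ iff $T$ meets the graph $\Gamma\alpha$ of every marker $\alpha$ of $\cJ$. This amounts to $L(\cJ,P)^\vee = L(P,\cJ)$, from which $L(P,\cJ)^\vee = L(\cJ,P)$ follows by involutivity of Alexander duality. Both directions rest on the following combinatorial key lemma: \emph{for every marker $\alpha : I \to \NN$ of $\cJ$ and every $\phi \in \cJ^c$, the sets $\Gamma\alpha$ and $\Lambda\phi$ intersect}. This lemma is where the interplay between the marker condition and the structure of poset ideals in $\Hom(P,\NN)$ is exploited.

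To prove the key lemma I would split on whether $\phi$ strictly exceeds $\alpha$ somewhere on $I$. If the set $\{p \in I : \phi(p) > \alpha(p)\}$ is nonempty, pick a minimal such $p_0$. Since $I$ is a poset ideal of $P$, every $q < p_0$ lies in $I$, and minimality of $p_0$ combined with the isotonicity of $\alpha$ gives $\phi(q) \leq \alpha(q) \leq \alpha(p_0) < \phi(p_0)$, so that $(p_0, \alpha(p_0)) \in \Gamma\alpha \cap \Lambda\phi$. In the complementary case $\phi|_I \leq \alpha$ pointwise, I would extend $\alpha$ to $\psi : P \to \NN$ by setting $\psi(p) := \max\bigl(\phi(p),\, \max_{q \in I,\, q \leq p} \alpha(q)\bigr)$ for $p \notin I$; a routine check confirms $\psi$ is isotone, restricts to $\alpha$ on $I$, and dominates $\phi$. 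Since $\cJ$ is a poset ideal and $\phi \notin \cJ$, also $\psi \notin \cJ$, contradicting the marker property of $\alpha$.

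The forward implication of the duality is now immediate: if $x^T \in L(\cJ,P)$ then $T \supseteq \Lambda\phi$ for some $\phi \in \cJ^c$, and the key lemma yields $T \cap \Gamma\alpha \supseteq \Lambda\phi \cap \Gamma\alpha \neq \emptyset$ for every marker $\alpha$. The main obstacle is the converse: from the assumption that the finite set $T \subseteq P \times \NN$ meets every marker graph, I must exhibit some $\phi \in \cJ^c$ with $\Lambda\phi \subseteq T$. My construction is the greedy recursion
\[
\phi(p) := \min\bigl\{\, i \geq \textstyle\max_{q < p} \phi(q)\ :\ (p,i) \notin T \,\bigr\},
\]
well-defined because $T$ is finite. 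By the minimality in this definition, every pair $(p,i)$ with $\max_{q<p}\phi(q) \leq i < \phi(p)$ lies in $T$, so $\Lambda\phi \subseteq T$; simultaneously $\Gamma\phi$ is disjoint from $T$ by construction. If $\phi$ were in $\cJ$, then $\phi$ itself (viewed with $I = P$) would be a marker, and the hypothesis would force $T \cap \Gamma\phi \neq \emptyset$, a contradiction. Hence $\phi \in \cJ^c$ and $x^T \in L(\cJ,P)$, which closes the argument.
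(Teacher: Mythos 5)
Your proof is correct and follows the same two-part skeleton as the paper's: first the nonemptiness $\Gamma\alpha \cap \Lambda\phi \neq \emptyset$ for every marker $\alpha$ and every $\phi \in \cJ^c$, then the converse that any $T$ hitting every marker graph contains some $\Lambda\phi$ with $\phi \in \cJ^c$. The difference is that you make the argument self-contained where the paper delegates to external facts: the paper invokes Lemma 5.7 of \cite{FGH} (the special case of your key lemma with $I=P$) and then patches it to general markers via a terse ``choose an extension with large values off $I$'' step, whereas your Case~1/Case~2 dichotomy handles both at once, with Case~2 replacing the large-constant extension by $\psi(p) = \max(\phi(p), \max_{q \in I, q \leq p}\alpha(q))$. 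Likewise for the converse direction, the paper appeals to Lemma 5.8 of \cite{FGH} together with the generalized Dickson's lemma to get a minimal $\psi$ disjoint from $T$ and conclude $\Lambda\psi \subseteq T$; your greedy recursion $\phi(p) = \min\{i \geq \max_{q<p}\phi(q) : (p,i)\notin T\}$ explicitly constructs that minimal map and verifies both $\Gamma\phi\cap T=\emptyset$ and $\Lambda\phi\subseteq T$ directly, bypassing the existence-plus-Dickson argument. Same mathematics, but your version is closer to a from-scratch proof and makes the combinatorial content of the cited lemmas visible.
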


Our proof is close to the proof of Theorem 5.9 of \cite{FGH}.
Let us first recall some lemmata.
Let $\cJ$ be a poset ideal in $\Hom(P,\NN)$ and $\cJ^c$ its complement filter.
The following is Lemma 5.7 in \cite{FGH}.

\begin{lemma} \label{lem:DefiGaLa}
Let $\phi \in \cJ$ and $\psi \in \cJ^c$. Then $\Gamma \phi \cap \Lambda\psi$ 
is nonempty.
\end{lemma}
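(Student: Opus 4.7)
The plan is to exhibit an explicit element of the intersection, namely $(p,\phi(p))$ for a carefully chosen $p\in P$. Since $\phi\in\cJ$, $\psi\in\cJ^c$, and $\cJ$ is a poset ideal, we cannot have $\psi\leq\phi$ pointwise --- otherwise $\psi$ would belong to $\cJ$. Consequently the set
\[ A \;=\; \{\,p\in P : \psi(p) > \phi(p)\,\} \]
is nonempty. I would choose $p$ to be any minimal element of $A$ with respect to the order on $P$ (which exists because $P$ is finite), and then set $i=\phi(p)$.

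With this choice, $(p,i)\in\Gamma\phi$ by definition of the graph. To confirm that $(p,i)\in\Lambda\psi$, there are two inequalities to check. The upper bound $i<\psi(p)$ is immediate from $p\in A$. For the lower bound, given any $q<p$ in $P$, minimality of $p$ in $A$ forces $q\notin A$, so $\psi(q)\leq\phi(q)$; since $\phi$ is isotone, $\phi(q)\leq\phi(p)=i$. Chaining these two inequalities gives $\psi(q)\leq i$, as required by the definition of the ascent.

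There is no substantive obstacle: the argument is the standard "descend to a minimal point of disagreement" device. The only step that deserves emphasis is the first --- it is exactly the fact that $\cJ$ is a \emph{poset ideal}, not merely a subset, that upgrades the membership statement $\psi\notin\cJ$ to the pointwise comparison $\psi\not\leq\phi$, thereby populating $A$ and making the rest of the argument possible.
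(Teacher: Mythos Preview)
Your proof is correct. The paper does not supply its own proof here but simply cites \cite[Lemma~5.7]{FGH}; your argument is the natural one, and indeed the paper uses exactly the same ``minimal point of disagreement'' device when proving the closely related Lemma~\ref{lem:DefiTopp}.
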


The following is Lemma 5.8 in \cite{FGH}.
\begin{lemma} \label{lem:DefiSLa}
Given a subset $S$ of $P \times \NN$. Suppose it is disjoint from $\Gamma \phi$
for some $\phi$ in $\Hom(P, \NN)$. If $\phi$ is minimal such w.r.t.
the partial order on $\Hom(P, \NN)$, then $S \supseteq \Lambda\phi$.
\end{lemma}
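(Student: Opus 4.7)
The plan is a proof by contradiction using a one-point downward perturbation of $\phi$. Suppose there exists $(p,i) \in \Lambda\phi$ with $(p,i) \notin S$; I will construct an isotone map $\psi$ strictly below $\phi$ in the componentwise order with $\Gamma\psi \cap S = \emptyset$, contradicting the minimality assumption on $\phi$.

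Define $\psi: P \to \NN$ by $\psi(p) = i$ and $\psi(q) = \phi(q)$ for all $q \neq p$. The condition $(p,i) \in \Lambda\phi$ gives $\phi(q) \leq i$ for every $q < p$, so $\psi(q) = \phi(q) \leq i = \psi(p)$; and it gives $i < \phi(p)$, so for every $r > p$ we have $\psi(p) = i < \phi(p) \leq \phi(r) = \psi(r)$ since $\phi$ is isotone. For incomparable elements nothing needs to be checked, so $\psi$ is isotone. Since $\psi(p) = i < \phi(p)$ and $\psi$ agrees with $\phi$ elsewhere, we have $\psi < \phi$ in $\Hom(P,\NN)$.

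Finally, I verify that $\Gamma\psi \cap S = \emptyset$. For $q \neq p$, the pair $(q,\psi(q)) = (q,\phi(q))$ lies in $\Gamma\phi$, which is disjoint from $S$ by hypothesis. For $q = p$, the pair $(p, \psi(p)) = (p,i)$ lies outside $S$ by the choice of $(p,i)$. Thus $\psi$ contradicts the minimality of $\phi$, so no such $(p,i)$ can exist and $\Lambda\phi \subseteq S$. The only (very mild) subtlety is making sure the perturbation respects isotonicity both below and above $p$, which is exactly what the two inequalities in the definition of $\Lambda\phi$ guarantee.
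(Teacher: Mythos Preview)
Your proof is correct. The paper does not give its own argument here; it simply cites Lemma~5.8 of \cite{FGH}. Your one-point downward perturbation is exactly the natural proof: the two inequalities defining $(p,i)\in\Lambda\phi$ are precisely what is needed to keep $\psi$ isotone, and the rest is immediate. There is nothing to add.
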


\begin{proof}[Proof of Proposition \ref{pro:defiAlexdual}.]
We show the following:

\noindent 1. The letterplace ideal $L(\cJ,P)$ is contained in the Alexander dual of
$L(P,\cJ)$: Every monomial in $L(\cJ,P)$ has non-trivial common divisor
with every monomial in $L(P,\cJ)$.

\noindent 2. The Alexander dual of $L(P,\cJ)$ is contained in the 
letterplace ideal $L(\cJ,P)$:
If a finite $S \sus P \times \NN$ intersects every 
$\Gamma \alpha$ where $\alpha : I \pil \NN$ is a marker for $\cJ$, 
the monomial $m_S$ is in $L(\cJ,P)$.

\medskip
\noindent 1. Let $\alpha : I \pil \NN$ be a marker for $\cJ$ and
let $\psi \in \cJ^c$.
By Lemma \ref{lem:DefiGaLa}, for every extension $\phi$ of $\alpha$,
$\Gamma \phi$ and $\Lambda \psi$ intersect nonempty. But then clearly
$\Gamma \alpha$ and $\Lambda \psi$ intersect nonempty.

\medskip
\noindent 2. Suppose a finite $S$ intersects every $\Gamma \alpha$ where 
$\alpha : I \pil \NN$ is a marker for $\cJ$. Then $S$ intersects every
$\Gamma \phi$ where $\phi \in \cJ$. Let $\psi$ be a minimal element in
$\Hom(P,\NN)$ such that $\Gamma \psi$ and $S$ intersect empty. Then
$\psi \in \cJ^c$ and by Lemma \ref{lem:DefiSLa} $S \supseteq \Lambda \psi$. 
Therefore the monomial $m_S$ is in $L(\cJ,P)$.  
\end{proof}

When $I$ and $J$ are two Alexander dual monomial ideals, their
set of minimal generators will involve exactly the same variables.
The {\it support} $\Supp(\cJ)$ of the poset ideal $\cJ$ of $\Hom(P,\NN)$ is
the set pairs $(p,i)$ such that $x_{p,i}$ is a variable in one of the 
minimal generators of $L(P,\cJ)$, or equivalently $L(\cJ,P)$.
This is a finite set by the minimality observations in Definition 
\ref{def:defiLP}.
We can then consider the letterplace and co-letterplace
ideals $L(P,\cJ)$ and $L(\cJ,P)$ to live in the ring $\kr[x_{\Supp(\cJ)}]$, 
or in any polynomial ring $\kr[x_S]$ where $\Supp(\cJ) \sus S \sus P \times
\NN$. Most of our statements will be independent of what the ambient ring is.
In general the set $\Supp(\cJ)$ is not so easily described, but in the
case when $\cJ$ is finite there is a nice description, Section 
\ref{sec:fin}.

\section{Regular quotients}
\label{sec:reg}

In \cite[Section 3]{FGH} we gave many examples of ideals which derive from
letterplace ideals, and in \cite[Section 6]{FGH} many examples of ideals which derive from
co-letterplace ideals. But the point is that they {\it derive} from 
them by cutting down the corresponding quotient ring by a regular sequence.
The conditions ensuring that we have a regular sequence, are the main
technical results of \cite{FGH}. Here we generalize these results
to the setting of any poset ideal in $\Hom(P, \NN)$. 

\subsection{Regular sequences}
A subset $S$ of $P \times \NN$ gets the induced poset structure. 
Let $S \mto{\phi} R$ be an isotone map.
Denote by $P^{\op}$ be the opposite poset of $P$, i.e. $p \leq_{\op} q$
if $p \geq q$. 
We say that $\phi$ has {\it right} strict chain fibers if $\phi^{-1}(r)
\sus S$ is a chain when considered in $P^{\op} \times \NN$, and all
elements in this chain have distinct second coordinate. 
So if $(p,i)$ and $(q,j)$ are
distinct elements in the fiber with $i \leq j$, then $i < j$ and $p \geq q$
for the partial order on $P$.

Given a poset ideal $\cJ \sus \Hom(P, \NN)$, choose a {\it finite} 
$S$ such that 
$\Supp(\cJ) \sus S \sus P \times \NN$.
Any map $S \mto{\phi} R$ gives a map of linear spaces 
$\langle x_S \rangle \mto{\tilde{\phi}} \langle x_R \rangle$. Let $B$ be a 
basis for
the kernel of $\tilde{\phi}$, whose elements are differences $x_{p,i} - x_{q,j}$
such that $\phi(p,i) = \phi(q,j)$. 

\begin{theorem} \label{thm:regLP}
Given a poset ideal $\cJ \sus \Hom(P, \NN)$,  and let 
$\Supp(\cJ) \sus S \sus P \times \NN$ with $S$ finite.  
Let $\phi : S \pil R$ be an isotone map with right strict chain fibers. 
Then the basis $B$ is a regular sequence for $\kr[x_S]/L(\cJ,P)$. 
\end{theorem}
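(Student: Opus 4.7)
The plan is to combine a structural consecutivity lemma for the generators of $L(\cJ,P)$ on the fibers of $\phi$ with the explicit description of the minimal primes of $L(\cJ,P)$ through its Alexander dual $L(P,\cJ)$, and then to promote individual nonzerodivisor-ness to sequence regularity by induction on $\sum_F(|F|-1)$.

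\emph{Consecutivity lemma and individual nonzerodivisor-ness.} List a fiber $F = \phi^{-1}(r)$ as $s_1,\ldots,s_k$ in order of increasing second coordinate; the right strict chain fiber condition then forces $s_j = (p_j, i_j)$ with $p_1 \geq p_2 \geq \cdots \geq p_k$ in $P$ and $i_1 < i_2 < \cdots < i_k$ in $\NN$. For every isotone $\psi \in \cJ^c$ the intersection $\Lambda\psi \cap F$ is a consecutive segment $\{s_a,\ldots,s_b\}$: if $s_a, s_b \in \Lambda\psi$ and $a \leq c \leq b$, then isotonicity of $\psi$ gives $\psi(p_c) \geq \psi(p_b) > i_b > i_c$, while any $q < p_c \leq p_a$ satisfies $\psi(q) \leq i_a < i_c$, placing $s_c$ in $\Lambda\psi$. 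Next, Proposition \ref{pro:defiAlexdual} identifies the minimal primes of $L(\cJ,P)$ as the ideals $\fp_\alpha = (x_{p,\alpha(p)} : p \in I)$ indexed by markers $\alpha : I \to \NN$ of $\cJ$. For any pair $s = (p,i)$ and $s' = (q,j)$ in a common fiber (so $p \geq q$ and $i < j$), both elements cannot lie in $\Gamma\alpha$ simultaneously: $p = q$ would demand $\alpha(p) = i = j$, and $p > q$ would force $\alpha(p) \geq \alpha(q)$, contradicting $i < j$. Hence each $x_s - x_{s'} \in B$ avoids every associated prime of $L(\cJ,P)$ and is therefore a nonzerodivisor on $\kr[x_S]/L(\cJ,P)$.

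\emph{Inductive step.} To upgrade from individual nonzerodivisor-ness to sequence regularity, select a fiber $F$ with $|F| \geq 2$ and two adjacent elements $s_{k-1}, s_k$; modding out by $x_{s_{k-1}} - x_{s_k}$ yields a quotient $\kr[x_{S'}]/L'$ with $S' = S \setminus \{s_{k-1}\}$. By the consecutivity lemma, the generators of $L'$ fall into two types: consecutive squarefree blocks (when the original block in $F$ did not contain both $s_{k-1}$ and $s_k$), and blocks carrying a squared variable $x_{s_k}^2$ (when it did). The main obstacle is precisely this: once $L'$ fails to be squarefree, the clean minimal-primes argument above no longer directly governs the remaining differences in $B$. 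To complete the induction one must either track the primary decomposition of $L'$ through the non-squarefree reduction by exploiting the consecutivity structure, or equivalently match the Hilbert series of $\kr[x_S]/L(\cJ,P)$ with $(1-t)^{|B|}$ times that of the image of $L(\cJ,P)$ in $\kr[x_R]$. This is the technical core of the proof and is what Section \ref{sec:proof} undertakes.
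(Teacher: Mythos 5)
Your consecutivity lemma and the minimal-primes argument are both correct as far as they go: since $L(\cJ,P)$ is squarefree, its associated primes equal its minimal primes, which by Alexander duality with $L(P,\cJ)$ are the face ideals $(x_{p,\alpha(p)} : p \in I)$ of graphs of markers $\alpha : I \to \NN$, and the right strict chain fiber condition on $\phi$ indeed prevents two elements of one fiber from both lying in a single $\Gamma\alpha$. This correctly establishes that each individual difference $x_s - x_{s'} \in B$ is a nonzerodivisor on $\kr[x_S]/L(\cJ,P)$. You also correctly identify where the argument stalls: after modding out by one such difference the image ideal is no longer squarefree, so the Stanley--Reisner description of its associated primes is lost and the same argument cannot be run again.

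The paper closes this gap by a different device than either of the two you sketch. It factors $\phi$ into elementary steps $S \mto{\phi'} R' \mto{\eta} R$ where $\eta$ identifies exactly two points $r_1, r_2$, and it chooses the singleton fiber $\phi'^{-1}(r_1) = \{(p,i)\}$ to be the element of $\phi^{-1}(r)$ with \emph{minimal} second coordinate. The intermediate ideal $L^{\phi'}(\cJ,P)$ is indeed not squarefree in general, but it \emph{is} squarefree in the one variable $x_{r_1}$, precisely because $\phi'^{-1}(r_1)$ is a singleton; this one-variable squarefreeness is all that Lemma~\ref{lem:regx01} needs, and it reduces nonzerodivisor-ness of $x_{r_1}-x_{r_2}$ to showing that no nonzero monomial $m$ can satisfy $x_{r_1}m = 0 = x_{r_2}m$. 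That monomial claim is then proved directly: if $m_{\ov{\Lambda\phi}} \mid x_{r_1}m$ and $m_{\ov{\Lambda\psi}} \mid x_{r_2}m$ with $\phi,\psi \in \cJ^c$, the minimality of $i$ plus the right strict chain condition forces an inequality between the relevant indices, and splicing the two chains of poset filters gives a new map $\phi'' \in \cJ^c$ with $m_{\ov{\Lambda\phi''}} \mid m$, a contradiction. So the paper never needs to describe the primary decomposition of the intermediate non-squarefree ideals, and it uses no Hilbert series comparison here (that device appears only later, in the proof of Theorem~\ref{thm:DetMain}). The key missing ideas in your proposal are the squarefree-in-one-variable reduction via Lemma~\ref{lem:regx01} and the choice of the minimal-coordinate element as the one to split off at each step.
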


We prove this in Section \ref{sec:proof}. It is a generalization of 
Theorem 5.6 in \cite{FGH}. Its proof follows rather closely the proof 
in \cite{FGH} but with some new elements.
We write $L^\phi(\cJ,P)$ for the ideal in $\kr[x_R]$ 
generated by the image of $L(\cJ,P)$ by $\phi$.

Consider again an isotone map $S \mto{\phi} R$ as above. We say it has
{\it left} strict chain fibers if $\phi^{-1}(r) \sus S$ is a chain 
when considered in $P^\op \times \NN$ and all its elements have distinct
first coordinates.

\begin{theorem} \label{thm:regCOLP}
Given a poset ideal $\cJ \sus \Hom(P, \NN)$, and let 
$\Supp(\cJ) \sus S \sus P \times \NN$ with $S$ finite.
Let $\phi : S \pil R$ be an isotone map with left strict chain fibers. 
Then the basis $B$ is a regular sequence for $\kr[x_S]/L(P,\cJ)$. 
\end{theorem}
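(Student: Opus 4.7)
The plan is to argue by induction on the cardinality of the basis $B$ that each binomial $x_s-x_{s'}\in B$ remains a nonzerodivisor on the intermediate quotient, mirroring the structure of the proof of Theorem \ref{thm:regLP} with ascents $\Lambda\psi$ playing the role of marker graphs $\Gamma\alpha$. The key reduction is to show (a) that a single variable difference coming from a left strict chain fiber is a nonzerodivisor on $\kr[x_S]/L(P,\cJ)$, and (b) that the quotient by such a difference is again a co-letterplace ring $\kr[x_{S^*}]/L(P,\cJ^*)$ for a reduced configuration whose induced map still has left strict chain fibers, so that the induction can continue.

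For step (a), the ideal $L(P,\cJ)$ is squarefree, so its associated primes coincide with its minimal primes. By Proposition \ref{pro:defiAlexdual} these minimal primes are indexed by the minimal generators of the Alexander dual $L(\cJ,P)$, namely $\fp_\psi=(x_{a,b}\,|\,(a,b)\in\Lambda\psi)$ as $\Lambda\psi$ ranges over inclusion-minimal ascents of isotone maps $\psi\in\cJ^c$. A binomial $x_s-x_{s'}$ fails to be a nonzerodivisor precisely when some $\fp_\psi$ contains both variables, i.e., when some minimal ascent $\Lambda\psi$ contains both $s$ and $s'$. Writing $s=(p,i)$, $s'=(q,j)$ and using the left strict chain fiber hypothesis, we may assume $p>_P q$ and $i\leq j$. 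If both points lay in $\Lambda\psi$, then the condition $\psi(r)\leq i$ for all $r<_P p$ applied to $r=q$ would give $\psi(q)\leq i\leq j$, contradicting the requirement $\psi(q)>j$ forced by $(q,j)\in\Lambda\psi$. Thus no such ascent exists, and $x_s-x_{s'}$ is a nonzerodivisor. This is exactly dual to the argument for Theorem \ref{thm:regLP}, in which the strict inequality on second coordinates clashed with the isotonicity of markers.

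Step (b) is the main obstacle. After quotienting by $x_s-x_{s'}$ one must identify the quotient ring $\kr[x_S]/(L(P,\cJ)+(x_s-x_{s'}))$ with $\kr[x_{S^*}]/L(P,\cJ^*)$ for a suitable pair $(S^*,\cJ^*)$, and verify that the remaining binomials in $B$ still come from a map $S^*\pil R$ with left strict chain fibers. The chain-fiber property is obviously preserved by passing to $S^*=S\setminus\{s\}$, so the real work is recognising the identified ideal as a co-letterplace ideal. I would do this by tracking minimal markers: a minimal marker $\alpha$ for $\cJ$ whose graph $\Gamma\alpha$ contains exactly one of $s,s'$ produces, after identification, the monomial of a graph of a minimal marker of a modified poset ideal $\cJ^*\sus\Hom(P,\NN)$ obtained from $\cJ$ by a controlled modification at the identified coordinate, and the distinct-first-coordinate condition is what prevents two entries of a single marker from collapsing under the identification. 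This bookkeeping parallels the proof of the letterplace version in \cite{FGH} and is deferred to Section \ref{sec:proof}.
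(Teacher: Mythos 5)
Your step (a) is a correct and appealingly slick observation: since $L(P,\cJ)$ is a squarefree monomial ideal, its associated primes are exactly its minimal primes, which by Alexander duality (Proposition \ref{pro:defiAlexdual}) are the ideals $\fp_\psi=(x_s : s\in\Lambda\psi)$ for $\psi$ with inclusion-minimal ascent in $\cJ^c$, and your verification that no $\Lambda\psi$ can contain two elements $(p,i),(q,j)$ of a left strict chain fiber is correct. This cleanly handles the \emph{first} element of $B$.

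The genuine gap is step (b), which is the whole difficulty and which you defer without evidence. The claim that $\kr[x_S]/(L(P,\cJ)+(x_s-x_{s'}))$ is again of the form $\kr[x_{S^*}]/L(P,\cJ^*)$ is in general \emph{false}, because the image ideal $L^{\phi'}(P,\cJ)$ need not even be squarefree: two elements $(p,i)$ and $(q,i)$ of a marker graph $\Gamma\alpha$ with $p>q$ and equal second coordinate can lie in a single fiber (the left strict chain condition does not exclude this, and isotonicity of $\alpha$ actually forces equal second coordinates whenever two graph points fall in one fiber), so the image of $m_{\Gamma\alpha}$ acquires a square. Since co-letterplace ideals are squarefree by construction, the quotient cannot be recognised as one, and your associated-primes argument from step (a) cannot simply be reapplied. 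The paper's proof does not make this claim: it fixes a factorization $S\to R'\to R$ with $\phi'^{-1}(r_1)$ a singleton (so the image ideal $L^{\phi'}(P,\cJ)$ is at least squarefree in the one variable $x_{r_1}$, which is exactly what Lemma \ref{lem:regx01} needs), and then shows directly by a combinatorial argument with minimal markers $i$, $j$ and a constructed interpolating marker $\ell$ that if $m$ is nonzero in $\kr[x_{R'}]/L^{\phi'}(P,\cJ)$ then $mx_{r_1}$ and $mx_{r_2}$ cannot both vanish. To complete your proof you would need to replace step (b) by an argument of this kind; the reduction you are hoping for is not available.
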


Again we prove this in Section \ref{sec:proof}, and it is a generalization of 
Theorem 5.12 in \cite{FGH}. Its proof also follows rather closely the proof 
in \cite{FGH}.
We write $L^\phi(P,\cJ)$ for the ideal in $\kr[x_R]$ 
generated by the image of $L(P,\cJ)$ by $\phi$.

\section{Monomial ideals in $\kr[x_P]$}
\label{sec:monXP}

When $P$ is the antichain on $m$ elements, then $\Hom(P,\NN)$ identifies
as $\NN^m$. A poset filter $\cJ^c$ in $\NN^m$ corresponds naturally to 
a monomial ideal in $\kr[x_1, \ldots, x_m]$. In this case
the letterplace ideal $L(\cJ,P)$ is the standard polarization of this
monomial ideal.
  In this section we associate monomial ideals in $\kr[x_P]$ to poset filters
$\cJ^c \in \Hom(P,\NN)$ for any poset $P$. The extra structure added 
in this setting, is that the letterplace ideal
$L(\cJ,P)$ may be considered a (non-standard) polarization of the monomial
ideal in $\kr[x_P]$.

Let $p : P \times \NN \pil P$ be the projection map onto the first coordinate
$P$. This map has right strict chain fibers, and so for any poset 
ideal $\cJ \sus P \times \NN$ we get by Theorem \ref{thm:regLP} an ideal
$L^p(\cJ,P)$ in $\kr[x_P]$ whose quotient ring is a regular quotient of 
the quotient ring $\kr[x_S]/L(\cJ,P)$, for suitable finite
$S \sus P \times \NN$.

In the first subsection we get a direct description of the ideal 
$L^p(\cJ,P)$ and its quotient ring in terms of the poset ideal $\cJ$. 
In the next subsection we achieve an intrinsic description of which monomial
ideals in $\kr[x_P]$ that come from a poset ideal $\cJ$ in $\Hom(P,\NN)$.


\subsection{Correspondence between monomials in 
$\kr[x_P]$ and elements of $\Hom(P,\NN)$}
Given a poset $P$. The polynomial ring $\kr[x_P]$ may be identified
as the semi-group ring of the monoid $\NN P$ consisting of sums
$\sum_{p \in P} n_p p$ where $n_p \in \NN$. This monoid is naturally also
a poset by $\sum_{p \in P} n_p p
\leq \sum_{p \in P} m_p p$ if each $n_p \leq m_p$
(so this poset structure is determined only by the cardinality of $P$).
The monoid $\NN P$ naturally identifies as the monomials in 
$\kr[x_P]$ and we shall freely use this identification. 

Now given $\phi \in \Hom(P, \NN)$. The ascent 
$\Lambda \phi \sus P \times \NN$.
Taking the formal sum of its elements, we consider it as an element
of the monoid $\NN (P \times \NN)$. By the projection $P \times \NN \pil P$
we get the map of monoids $\NN(P \times \NN) \pil \NN P$. Denote by
$\overline{\Lambda}\phi$ the image of $\Lambda\phi$ by this map.

\begin{example}
Consider the poset $P$ below and the map $\phi : P \pil \NN$ sending
the vertices to the numbers to the right.
\begin{center}
\begin{tikzpicture}[scale=1, vertices/.style={draw, fill=black, circle, inner sep=1pt}]
              \node [vertices, label=right:{$a$}] (0) at (0,0){};
              \node [vertices, label=right:{$c$}] (1) at (0,1){};
              \node [vertices, label=right:{$b$}] (2) at (0.75,0){};
              \node [vertices, label=right:{$d$}] (3) at (0.75,1){};
              \foreach \to/\from in {0/1, 1/2, 2/3}
      \draw [-] (\to)--(\from);
\end{tikzpicture}
\hskip 5mm
$\mapsto$
\hskip 5mm
\begin{tikzpicture}[scale=1, vertices/.style={draw, fill=black, circle, inner sep=1pt}]
              \node [vertices, label=right:{$2$}] (0) at (0,0){};
              \node [vertices, label=right:{$5$}] (1) at (0,1){};
              \node [vertices, label=right:{$1$}] (2) at (0.75,0){};
              \node [vertices, label=right:{$3$}] (3) at (0.75,1){};
              \foreach \to/\from in {0/1, 1/2, 2/3}
      \draw [-] (\to)--(\from);

\end{tikzpicture}
\end{center}
Then $\overline{\Lambda}\phi$ is the monomial $x_a^2x_bx_c^3x_d^2$.
\end{example}

For antichains $A$ and $B$ in $P$ define an order relation by $A \leq B$ if
for every $b \in B$ there is an $a \in A$ with $a \leq b$
(but for $a \in A$ we do not require there to be $b \in B$ with $a \leq b$).
If $A$  and $B$ generate the filters $F_A$ and $F_B$, this corresponds
precisely to $F_A \supseteq F_B$.

Given an isotone map 
\begin{equation} \label{eq:regPN} 
\phi : P \pil \NN
\end{equation}
we get a filter $F_m = \phi^{-1}[m, +\infty \rangle$
and a descending chain 
\begin{equation} \label{eq:regChfilter}
P = F_0 \supseteq F_1 \supseteq F_2 \supseteq \cdots
\end{equation}
of poset filters. Let $A_i = \min F_i$. This is an antichain and the $A_i$
fulfill the order relation given above
\begin{equation} \label{eq:regAchfilter}
A_0 \leq A_1 \leq A_2 \leq \cdots .
\end{equation}

\begin{lemma} \label{lem:regFilters}
There are one-to-one correspondences between isotone maps \eqref{eq:regPN},
chains of poset filters \eqref{eq:regChfilter}, and chains of 
antichains \eqref{eq:regAchfilter}.
 Furthermore:
\begin{itemize}
\item[a.] $(p,i) \in \Lambda \phi$ iff $p \in A_{i+1}$
\item[b.] The projection
$\oLa \phi$ is $\sum_{i>0}  A_i$ considered as an element of $\NN P$.
\end{itemize}
\end{lemma}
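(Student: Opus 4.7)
The plan is to verify the three bijections and then deduce the two bulleted facts, all of which are routine unpackings of definitions once we fix the dictionary between isotone maps and their superlevel sets.

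First, for the bijection between isotone maps $\phi: P \pil \NN$ and descending chains of filters \eqref{eq:regChfilter}, I would define in one direction $F_m = \phi^{-1}[m, +\infty \rangle$; isotonicity of $\phi$ makes each $F_m$ a filter, and clearly $F_m \supseteq F_{m+1}$. Since $P$ is finite, $F_m = \emptyset$ for $m$ large. In the other direction, given the chain of filters, set $\phi(p) = \max \{ m \, | \, p \in F_m \}$ (with the convention $\max \emptyset = 0$, which matches $F_0 = P$). If $p \leq q$ and $p \in F_m$ then $q \in F_m$, so $\phi(p) \leq \phi(q)$. A short check shows these constructions are mutually inverse.

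Next, for the bijection between filters $F_i$ and antichains $A_i = \min F_i$, I would use that in a finite poset every filter is the up-set of its set of minimal elements, so $F \mapsto \min F$ is a bijection with antichains. It remains to match the order relation: $F_i \supseteq F_{i+1}$ if and only if every element of $F_{i+1}$ lies above some element of $A_i$, equivalently every $b \in A_{i+1} \sus F_{i+1}$ satisfies $b \geq a$ for some $a \in A_i$, which is exactly $A_i \leq A_{i+1}$ in the relation defined above. (The converse direction uses that $F_{i+1}$ is generated by $A_{i+1}$.)

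For part (a), by definition $p \in A_{i+1}$ means $p \in F_{i+1}$ and no $q < p$ is in $F_{i+1}$. Unpacking with $F_{i+1} = \phi^{-1}[i+1, +\infty \rangle$ this says $\phi(p) \geq i+1$, i.e.\ $\phi(p) > i$, and $\phi(q) \leq i$ for all $q < p$. This is exactly the defining condition for $(p,i) \in \Lambda\phi$. Part (b) then follows immediately: by (a),
\[ \Lambda\phi = \bigsqcup_{i \geq 0} (A_{i+1} \times \{i\}), \]
and the projection $P \times \NN \pil P$ discards the second coordinate, giving $\oLa\phi = \sum_{i \geq 0} A_{i+1} = \sum_{j > 0} A_j$ in $\NN P$. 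The only mildly delicate point is keeping the index shift straight between $\phi(p) > i$ and $p \in A_{i+1}$; once that is fixed, no real obstacle remains.
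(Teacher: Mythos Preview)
Your proof is correct and follows essentially the same approach as the paper: the bijections are dismissed there as ``clear'' (you spell them out a bit more carefully), and part (a) is the identical unpacking of the definitions of $\Lambda\phi$ and $A_{i+1}$. The only cosmetic difference is in (b): the paper computes the coefficient of each fixed $p$ on both sides (getting $\phi(p) - \max_{q<p}\phi(q)$ in each case), whereas you project the disjoint-union decomposition $\Lambda\phi = \bigsqcup_{i\geq 0}(A_{i+1}\times\{i\})$ coming from (a); both are immediate consequences of (a).
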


\begin{proof}
The one-to-one correspondences are clear. That $(p,i) \in \Lambda \phi$
means $i < \phi(p)$ and $\phi(q) \leq i$ for every $q < p$. Then $p \in F_{i+1}$
and since $q$ is not in $F_{i+1}$ we get $p \in A_{i+1}$. 
Conversely if $p \in A_{i+1}$, then $\phi(p) \geq i+1$ and when $q < p$ 
then $q \not \in F_{i+1}$ and so $\phi(q) \leq i$. Thus $(p,i)$ in $\La \phi$.

For statement b. let $\phi(p) = r$ and $s = \max \{ \phi(q) \, | \, q < p \}$, 
so $\Lambda \phi$ contains $\{ (p,s),(p,s+1), \ldots, (p,r-1) \}$ and
these are all the elements with $p$ as first coordinate. Then 
$\oLa \phi = \cdots + (r-s)p + \cdots$. But $p$ is then in precisely
$A_{s+1}, \cdots, A_r$ and so $\sum_{i>0} A_i = \cdots + (r-s)p + \cdots$.  
\end{proof}

\begin{proposition} \label{pro:regLbar}
The map 
\[ \oLa : \Hom(P, \NN)  \pil \NN P \]
is a bijection.
\end{proposition}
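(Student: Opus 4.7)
The plan is to exhibit an explicit two-sided inverse to $\oLa$, built directly from the formula that appears in the proof of Lemma \ref{lem:regFilters}(b). That proof establishes, for every $\phi \in \Hom(P,\NN)$ and every $p \in P$, that the coefficient of $p$ in $\oLa \phi \in \NN P$ equals $\phi(p) - \max\{\phi(q) : q < p\}$, with the convention $\max \emptyset = 0$ for $p$ minimal. All the content of the proposition sits inside this one identity.

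For injectivity, I would read the identity as a recipe for reconstructing $\phi$ from $\oLa \phi$. Proceeding by induction on the height of $p$ in the finite poset $P$, the formula recovers $\phi(p)$ from the coefficient of $p$ in $\oLa \phi$ together with the already-determined values $\phi(q)$ for $q < p$: for $p$ minimal, $\phi(p)$ is simply the coefficient of $p$; in general, $\phi(p) = n_p + \max_{q<p} \phi(q)$, where $n_p$ denotes the coefficient of $p$ in $\oLa \phi$. Hence $\oLa$ is injective.

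For surjectivity, I would run the same recursion in reverse. Given $m = \sum_{p \in P} n_p \cdot p \in \NN P$, define $\phi : P \pil \NN$ by the inductive rule $\phi(p) = n_p + \max\{\phi(q) : q < p\}$, with $\max \emptyset = 0$; this is well-defined because $P$ is finite. Because each $n_p \geq 0$, whenever $p' < p$ we get $\phi(p') \leq \max_{q<p} \phi(q) \leq \phi(p)$, so $\phi$ is isotone. Applying Lemma \ref{lem:regFilters}(b) to this $\phi$ then yields $\oLa \phi = m$. I do not anticipate any genuine obstacle, since the statement is essentially a repackaging of Lemma \ref{lem:regFilters}(b) as an invertible correspondence; the only verifications needed are that the recursive definition of $\phi$ is well-defined and isotone, both of which are immediate from finiteness of $P$ and non-negativity of the $n_p$.
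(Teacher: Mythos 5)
Your proof is correct. The key identity you extract — that the coefficient of $p$ in $\oLa\phi$ equals $\phi(p) - \max\{\phi(q) : q < p\}$, with $\max\emptyset = 0$ — is indeed established explicitly in the proof of Lemma \ref{lem:regFilters}(b), and reading it as a recursion on the height of $p$ in the finite poset $P$ gives an explicit two-sided inverse to $\oLa$, exactly as you argue. The paper's own proof is organized differently: for surjectivity it iteratively peels off antichains of minimal elements of the support of $m \in \NN P$ and subtracts, building the chain of poset filters $P = F_0 \supseteq F_1 \supseteq \cdots$ which then determines $\phi$ via Lemma \ref{lem:regFilters}; for injectivity it compares the filter chains $F_i^\phi$ and $F_i^\psi$ at the first index where they disagree and shows the resulting coefficients in $\NN P$ differ. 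Both arguments hinge on the same structural content, but yours is a depth-first reconstruction via an explicit coefficient formula while the paper's is a breadth-first reconstruction via the antichain/filter correspondence set up in the rest of the section. Your version is more direct and self-contained; the paper's phrasing in terms of filtrations keeps the argument aligned with the $A_i$/$F_i$ machinery that recurs elsewhere (e.g.\ in the proof of Theorem \ref{thm:regLP}).
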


\begin{proof}
Given an isotone $\phi : P \pil \NN$ we get by the above lemma
a filtration of poset filters 
\begin{equation} \label{eq:MonXPFilt}
P = F_0 \supseteq F_1 \supseteq \cdots \supseteq F_N = \emptyset. 
\end{equation}

\noindent{Injectivity of $\oLa$:} If $\phi \neq \psi$, then $F_i^\phi \neq F_i^\psi$
for some $i$. Let $i$ be minimal such. Then for say $\phi$ there is a minimal 
$p \in F^\phi_i$ such that
no $q \in F_i^\psi$ is $\leq p$.
Then $\oLa\phi = \cdots + j_1p + \cdots$ and 
$\oLa\psi = \cdots + j_2 p + \cdots$ where $j_2 < j_1$ (since $i$ is minimal)
and so $\oLa\phi \neq \oLa(\psi)$. 

\noindent{Surjectivity of $\oLa$:} Given $\bfa_1 = \sum a_{1p} p \in \NN P$. Let 
$A_1$ be the set of minimal elements of $\{p \, | \, a_{1p} \neq 0 \}$ 
and $F_1$ the filter generated by 
$A_1$. Considering the set $A_1$ as a formal sum in $\NN P$, 
let $\bfa_2 = \bfa_1 - A_1$ and let $F_2$ be the filter generated by $A_2$, 
the set of minimal elements $\min \{ p \, | \, a_{2p} \neq 0 \}$. 
Continuing we get a sequence
\[ P = F_0 \supseteq F_1 \supseteq \cdots \supseteq F_N = \emptyset.\]
This determines a map $\phi$ such that $\oLa\phi = \bfa_1$. 
\end{proof}


\begin{proposition}
If $\cJ$ is a poset ideal in $\Hom(P,\NN)$, then $\oLa\cJ$ is a poset
ideal in $\NN P$. 
\end{proposition}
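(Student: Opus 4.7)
The plan is to use the bijection $\oLa : \Hom(P,\NN) \to \NN P$ established in Proposition \ref{pro:regLbar}. Given $\phi \in \cJ$ and $\bfb \in \NN P$ with $\bfb \leq \oLa \phi$, there is a unique $\psi \in \Hom(P,\NN)$ with $\oLa \psi = \bfb$. It then suffices to show $\psi \leq \phi$ in $\Hom(P,\NN)$, for then $\psi \in \cJ$ since $\cJ$ is a poset ideal.

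The key ingredient is a closed formula for the coefficients of $\oLa\phi$. From the proof of Lemma \ref{lem:regFilters}(b), if one sets $M_\phi(p) := \max\{\phi(q) \mid q < p\}$ (and $M_\phi(p) := 0$ when $p$ is minimal in $P$), then
\[
[\oLa\phi]_p \;=\; \phi(p) - M_\phi(p) \quad \text{for every } p \in P,
\]
and similarly for $\psi$. The hypothesis $\oLa\psi \leq \oLa\phi$ therefore reads
\[
\psi(p) - M_\psi(p) \;\leq\; \phi(p) - M_\phi(p) \quad \text{for every } p \in P.
\]

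The conclusion $\psi(p) \leq \phi(p)$ will be obtained by induction on the height of $p$ in $P$ (the length of a longest chain in $P$ below $p$). For minimal $p$ both $M_\phi(p)$ and $M_\psi(p)$ vanish, so the displayed inequality directly gives $\psi(p) \leq \phi(p)$. For the inductive step, the hypothesis $\psi(q) \leq \phi(q)$ for all $q < p$ implies $M_\psi(p) \leq M_\phi(p)$; combining this with the displayed inequality yields
\[
\psi(p) \;\leq\; \phi(p) + (M_\psi(p) - M_\phi(p)) \;\leq\; \phi(p).
\]

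The only nontrivial point is recognizing that the partial order on $\NN P$ does not a priori reflect the pointwise order on $\Hom(P,\NN)$ (e.g.\ raising a single value of $\phi$ can change several coordinates of $\oLa\phi$). The coefficient formula from Lemma \ref{lem:regFilters}, however, makes the induction above essentially automatic, so there is no real obstacle once one has that formula in hand.
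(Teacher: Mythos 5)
Your proof is correct, and it takes a cleaner route than the paper's. What you prove is the (equivalent, but conceptually sharper) statement that the inverse bijection $\oLa^{-1} : \NN P \to \Hom(P,\NN)$ is isotone: whenever $\oLa\psi \leq \oLa\phi$ componentwise, the coefficient formula $[\oLa\phi]_p = \phi(p) - M_\phi(p)$ (extracted directly from the proof of Lemma \ref{lem:regFilters}(b), with $M_\phi(p)=0$ for $p$ minimal) lets you run an induction on height to conclude $\psi \leq \phi$ pointwise. Since a bijection of posets maps every poset ideal to a poset ideal precisely when its inverse is isotone, this immediately gives the proposition. The paper instead argues through the chain-of-antichains description: given the chain $A_1 \leq A_2 \leq \cdots \leq A_n$ attached to $\phi$ and the task of deleting one $p \in A_i$, it constructs a re-chaining $B_1 \leq B_2 \leq \cdots \leq B_n$ by propagating the deletion forward (choosing at each step the largest compatible $D_{j+1} \subseteq A_{j+1}$), and checks that the resulting $\psi$ satisfies $\psi \leq \phi$ and $\oLa\psi = \oLa\phi - p$; iterating then handles arbitrary decreases. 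Both arguments are correct and in fact prove the same underlying monotonicity of $\oLa^{-1}$; yours replaces the somewhat delicate antichain bookkeeping with a one-line formula and a transparent height induction, and arguably explains better \emph{why} the statement is true. The one thing worth making explicit in a write-up is the equivalence you are tacitly using, namely that for a bijection $f$ of posets, $f$ sends poset ideals to poset ideals iff $f^{-1}$ is isotone; it is a two-line verification, but spelling it out closes the logical loop.
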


\begin{proof}
Let $A_1 \leq A_2$ be antichains, and $D_1 \sus A_1$. Let $D_2$ be the largest
subset of $A_2$ such that $B_1 = (A_1 \backslash D_1) \cup D_2$ is an antichain.
(Note that there is a unique maximal such $D_2$.) 
Let $B_2 = A_2 \backslash D_2$.
Then $B_1 \leq B_2$ are antichains, $A_1 \leq B_1$ and $A_2 \leq B_2$, and
$A_1 + A_2 - D_1 = B_1 + B_2$ in $\NN P$. 
If we have a chain $A_1 \leq A_2 \leq A_3$ we may let 
$B_2 = (A_2 \backslash D_2) \cup D_3$ and $B_3 = A_3 \backslash D_3$. 
Again we have $A_3 \leq B_3$
and $A_1 + A_2 + A_3 - D_1$ equals $B_1 + B_2 + B_3$. 
In this way we may continue if we have longer chains.

Now given $\phi \in \cJ$. Let $n = \max_{p \in P} \phi(p)$. Then $\phi$
corresponds to a chain, with $A_i = \min F_i$ in \eqref{eq:MonXPFilt}:
\[ A_1 \leq A_2 \leq \cdots \leq A_n. \]
Furthermore $\oLa(\phi) = \sum_{i=1}^n A_i$. 
Let $p \in A_i$. We show that $\oLa\phi - p$ is also in $\oLa \cJ$, proving
that $\oLa \cJ$ is also a poset ideal.
If we remove a $p \in A_i$
from this, the above procedure gives a chain 
\[ B_1 \leq B_2 \leq \cdots \leq B_n \]
where $B_j = A_j$ for $j \leq i-1$ and $B_i = A_i \backslash \{ p \}$.  
Also $A_i \leq B_i$ for every $i$ and so this chain corresponds to 
an isotone map $\psi$ with $\psi \leq \phi$ and with 
\[ \oLa \psi = \sum_i  B_i = (\sum_i A_i) - p = \oLa \phi -p, \]
and so the latter is in $\oLa \cJ$.
\end{proof}

\begin{remark}
It is {\it not} true in general that 
if $Q \pil P$ is a bijective isotone map,
so $Q$ is a weakening of the partial order on $P$, the natural 
bijection 
\[ \oLa_Q^{-1} \circ \oLa_P : \Hom(P,\NN) \pil \Hom(Q,\NN) \]
takes poset ideals to poset ideals.
\end{remark}

Now we can round off with our goal of getting a description
of $L^p(\cJ,P)$ in $\kr[x_P]$ where
$p : P \times \NN \pil P$ is the projection map. 

\begin{corollary} \label{cor:regLbar}
The set of monomials in $L^p(\cJ,P)$ is precisely the
image by $\oLa$ of the filter $\cJ^c$. 
In other words, the normal (i.e. nonzero) monomials in $\kr[x_P]/L^p(\cJ,P)$ are
precisely the $\oLa\phi$ for $\phi \in \cJ$. 
\end{corollary}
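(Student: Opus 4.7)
The plan is to reduce the statement to two facts already established: that $\oLa:\Hom(P,\NN)\to \NN P$ is a bijection (Proposition \ref{pro:regLbar}), and that $\oLa\cJ$ is a poset ideal in $\NN P$ (the preceding proposition). The starting point is to unwind the definition of $L^p(\cJ,P)$. Under the ring map $\kr[x_{P\times \NN}]\to \kr[x_P]$ induced by $p:P\times\NN\to P$, which sends $x_{(q,i)}\mapsto x_q$, the generator $m_{\Lambda\phi}$ of $L(\cJ,P)$ maps to the monomial corresponding to the image in $\NN P$ of the formal sum of $\Lambda\phi$, namely $\oLa\phi$. This is immediate from the definition of $\oLa$, and is also recorded in Lemma \ref{lem:regFilters}(b). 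So $L^p(\cJ,P)$ is the monomial ideal in $\kr[x_P]$ generated by $\{\oLa\phi:\phi\in\cJ^c\}$.

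Identifying monomials in $\kr[x_P]$ with elements of $\NN P$, a monomial $m$ lies in $L^p(\cJ,P)$ iff $m\geq \oLa\phi$ in $\NN P$ for some $\phi\in\cJ^c$, by the standard characterization of membership in a monomial ideal. So I would next aim to show
\[
\{\,m\in\NN P : m\geq \oLa\phi \text{ for some } \phi\in\cJ^c\,\}\;=\;\oLa(\cJ^c).
\]
For this, use that $\oLa$ is a bijection to write $\oLa(\cJ^c)=\NN P\setminus\oLa(\cJ)$, and then invoke the preceding proposition, which tells us $\oLa(\cJ)$ is a poset ideal in $\NN P$. Hence $\oLa(\cJ^c)$ is a poset filter in $\NN P$; in particular it is upward-closed, so it contains every $m\geq\oLa\phi$ with $\phi\in\cJ^c$. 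The reverse inclusion is trivial since each $\oLa\phi\in \oLa(\cJ^c)$ is itself such a monomial. This gives the asserted equality of sets of monomials $L^p(\cJ,P)=\oLa(\cJ^c)$.

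The equivalent reformulation follows at once: the nonzero monomials of $\kr[x_P]/L^p(\cJ,P)$ are the monomials in $\NN P\setminus L^p(\cJ,P)=\NN P\setminus \oLa(\cJ^c)=\oLa(\cJ)$, where the last equality uses bijectivity of $\oLa$ once more.

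There is no real obstacle here; all the substance has been absorbed into the two input results. The only thing that requires any care is the identification of the projected generator $m_{\Lambda\phi}$ with $\oLa\phi$, which is a matter of bookkeeping between the additive notation on $\NN P$ and the multiplicative notation on monomials, and the clean dualization between poset ideals and poset filters under the bijection $\oLa$.
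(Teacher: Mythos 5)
Your proof is correct and takes essentially the same route as the paper: both reduce to the facts that $L^p(\cJ,P)$ is generated by $\{\oLa\phi : \phi\in\cJ^c\}$ and that $\oLa(\cJ^c)$ is a poset filter in $\NN P$ (equivalently, a monomial ideal), and conclude that the two sets of monomials coincide. You simply spell out the upward-closure step and the dualization via bijectivity of $\oLa$ a bit more explicitly than the paper does.
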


\begin{proof}
$L(\cJ,P)$ is generated by $\Lambda\phi$ for $\phi \in \cJ^c$, so
$L^p(\cJ,P)$ is generated by the $\oLa\phi$
for $\phi \in \cJ^c$. But the image of $\cJ^c$ by $\oLa$ is a poset filter in 
$\NN P$ or equivalently a monomial ideal in $\kr[x_P]$. 
Hence the image is precisely $L^p(\cJ,P)$.
\end{proof}

\begin{example}
If $P$ is an antichain $\underline{m} = \{1,2,\ldots,m\}$, 
the map $\oLa$ of Proposition \ref{pro:regLbar}
is an isomorphism of posets. Hence in this case we get a one-to-one 
correspondence between poset filters in $\Hom(\underline{m},\NN)$ and 
monomial ideals in $\kr[x_{\underline{m}}]$, given by the map $\oLa$. The ascent
$\La$ sends the filter $\cJ^c$ to the letterplace ideal 
$L(\cJ,P)$ in $\kr[x_{P \times \NN}]$, which is a squarefree monomial ideal. 
This ideal is the {\it standard polarization}
of the monomial ideal $L^p(\cJ,P)$ in $\kr[x_P]$, whose normal
monomials are precisely $\oLa \cJ$.
\end{example}

In Section \ref{sec:ststable} we show that the image of $\oLa$ when
$P$ is a chain, is precisely the strongly stable ideals in $\kr[x_P]$.

\subsection{Which monomial ideals in $\kr[x_P]$ come from 
$\Hom(P,\NN)$?}

Let $I \sus \kr[x_P]$ be a monomial  ideal. We want to investigate when 
$I = \oLa\cJ$ for a poset ideal $\cJ \sus \Hom(P,\NN)$. First we need
some definitions. 

\begin{definition}
Given $b \in P$, a multichain 
\[ C : p_1 \leq p_2 \leq \cdots \leq p_r \] in $P$ is a {\it $b$-chain} if
$p_r \leq b$. The associated monomial $m_C$ is $\prod_{i=1}^r x_{p_i}$. 
The $b$-chain {\it goes through $a$} if $a \leq b$ and $a$ may be inserted
in $C$ to make it a larger multichain (so $a$ may or may not
be equal to one of the $p_i$'s).

The $b$-chain $C$ is {\it in a monomial} $m$ if $m_C$ divides $m$. 
The $b$-chain $C$ is a {\it longest} $b$-chain in $m$ if there is no
$b$-chain $C^{\prime}$ of $m$ with cardinality $|C^\prime| > |C|$.
Note that there may be several longest $b$-chains of $m$.
\end{definition}

\begin{lemma} If $m = \oLa \phi$ and $b \in P$, the longest $b$-chain
in $m$ has length $\phi(b)$. 
\end{lemma}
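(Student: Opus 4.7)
The plan is to prove the lemma by establishing both an upper bound and a matching lower bound on the length of the longest $b$-chain in $m = \oLa \phi$.

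\medskip

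\noindent\textbf{Preliminary observation.} By Lemma \ref{lem:regFilters}(a) and the description in the proof of part (b), for each $p \in P$ the exponent of $x_p$ in $m = \oLa\phi$ equals the number of indices $i$ with $p \in A_i$, and this in turn equals $\phi(p) - s_p$, where $s_p := \max\{\phi(q) : q < p\}$ (with the convention $s_p = 0$ when $p$ is minimal in $P$). This is the key numerical input I will use for both directions.

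\medskip

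\noindent\textbf{Upper bound.} Given a $b$-chain $p_1 \leq \cdots \leq p_r \leq b$ in $m$, I would group the entries by their distinct values: let $p^{(1)} < p^{(2)} < \cdots < p^{(k)} \leq b$ be the distinct elements appearing, with multiplicity $n_i$ for $p^{(i)}$, so $r = \sum n_i$. Since $m_C$ divides $m$, the preliminary observation gives $n_i \leq \phi(p^{(i)}) - s_{p^{(i)}}$. Because $p^{(i-1)} < p^{(i)}$, we have $\phi(p^{(i-1)}) \leq s_{p^{(i)}}$ (setting $\phi(p^{(0)}) = 0$). The sum then telescopes:
\[
r \;=\; \sum_{i=1}^k n_i \;\leq\; \sum_{i=1}^k \bigl(\phi(p^{(i)}) - \phi(p^{(i-1)})\bigr) \;=\; \phi(p^{(k)}) \;\leq\; \phi(b).
\]

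\medskip

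\noindent\textbf{Lower bound (construction).} I will build a $b$-chain of length exactly $\phi(b)$ by producing a multichain $q_1 \leq q_2 \leq \cdots \leq q_{\phi(b)} \leq b$ with $q_j \in A_j$ for each $j$. Start at the top: since $b \in F_{\phi(b)}$, there exists $q_{\phi(b)} \in A_{\phi(b)} = \min F_{\phi(b)}$ with $q_{\phi(b)} \leq b$. Recursively, given $q_{j+1} \in F_{j+1} \subseteq F_j$, pick $q_j \in A_j$ with $q_j \leq q_{j+1}$. Then $C : q_1 \leq \cdots \leq q_{\phi(b)}$ is a $b$-chain of length $\phi(b)$. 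To verify $m_C \mid m$, I note that for each $p \in P$ the multiplicity of $p$ among the $q_j$ is bounded by $\#\{j : p \in A_j\}$, which is precisely the exponent of $x_p$ in $m$ by the preliminary observation.

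\medskip

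\noindent\textbf{Expected obstacle.} The argument itself is short once the preliminary observation is isolated, so the main conceptual step is just recognizing that the exponent of $x_p$ in $\oLa\phi$ coincides with the length of the interval of levels $j$ at which $p$ lies in $A_j$. This is already essentially contained in Lemma \ref{lem:regFilters}, so I do not anticipate a genuine obstacle beyond carefully handling boundary conventions (minimal elements of $P$, and the case $\phi(b) = 0$, where the chain is empty and both sides equal $0$).
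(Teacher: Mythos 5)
Your proof is correct, and it follows the same overall strategy as the paper: establish an upper bound $r \leq \phi(b)$ for any $b$-chain in $m$, then construct a $b$-chain of length exactly $\phi(b)$. The specific mechanisms differ, however. For the upper bound, the paper works directly with the ascent set $\Lambda\phi$: it shows that if $(p,i),(q,j) \in \Lambda\phi$ with $p \leq q \leq b$ are distinct then $i < j < \phi(b)$, so the (distinct) second coordinates force $r \leq \phi(b)$. You instead isolate the exponent formula $\phi(p) - s_p$ for $x_p$ in $\oLa\phi$ and close the argument with a telescoping sum; this is arguably cleaner since it avoids juggling second coordinates. For the lower bound, the paper induces on the height of $b$, starting from a $q$-chain for $q<b$ with $\phi(q)$ maximal and appending $b$ the right number of times; you go the other way, descending from level $\phi(b)$ through the filtration $F_1 \supseteq F_2 \supseteq \cdots$ and picking $q_j \in A_j$ with $q_j \leq q_{j+1}$, then verifying divisibility by counting membership in the antichains $A_j$. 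Both constructions are valid and comparably short; yours packages the argument more uniformly around Lemma~\ref{lem:regFilters}(b), while the paper's induction keeps the relation to the multichain definition a bit more visible. One small remark: your divisibility check for the constructed chain only needs that $\{j : q_j = p\}$ is a subset of $\{j : p \in A_j\}$, which you state; it would be worth making explicit that the $q_j$ are drawn from distinct levels $j$ (which is automatic from the construction), so this inclusion directly bounds multiplicities.
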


\begin{proof}
Suppose $(p,i)$ and $(q,j)$ are distinct elements in $\La \phi$
with $p \leq q \leq b$. 
If $p < q$ then $i < \phi(p) \leq j < \phi(q) \leq \phi(b)$.
If $p = q$ we may assume $i < j$ and we have $j < \phi(p) \leq \phi(b)$.
Thus in both cases $i < j < \phi(b)$. 
The consequence of this is that if 
$p_1 \leq p_2 \leq \cdots \leq p_r \,\,(\leq b)$ is a $b$-chain
in $m$, then $r \leq \phi(b)$.

\medskip
We now show that there is a $b$-chain in $m$ of length $r = \phi(b)$. 
If $b$ is minimal then 
\[ m = \oLa \phi = r \cdot b + \mbox{ other terms }, \]
and so clearly $b \leq b \leq \cdots \leq b$ repeated $r$ times is 
a longest $b$-chain in $m$. 

Suppose $b$ is not minimal and let $q < b$ be such that $s = \phi(q)$
is maximal. By induction on the height of elements of $P$ we may assume that
there is a $q$-chain $p_1 \leq \cdots \leq p_s$ in $m$. Since 
$(b,s), \ldots, (b,r-1)$ is in $\Lambda \phi$, this extends
to a $b$-chain
\[ p_1 \leq \cdots \leq p_s \leq b \leq \cdots \leq b \]
in $m$, with $b$ repeated $r-s$ times. This chain has length $r = \phi(b)$.
\end{proof} 

\begin{definition} An ideal $I \sus \kr[x_P]$ is {\it $P$-stable} if the
following holds: Let $m = nm_B$ be a monomial in $I$ where 
$B = \{ b_1, \ldots, b_r \}$ is an antichain in $P$. 
Let $a \in P$ and suppose for every $b \in B$ there is
a longest $b$-chain in $m$ going through $a$. Then $n \cdot x_a \in I$. 
\end{definition}

\begin{proposition} \label{pro:MXPPstabil} 
An ideal $I \sus \kr[x_P]$ is an image $I = \oLa\cJ^c$
for some poset filter $\cJ^c \sus \Hom(P, \NN)$ iff $I$ is $P$-stable.
\end{proposition}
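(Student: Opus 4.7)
The plan is to verify both implications via the bijection $\oLa: \Hom(P,\NN) \to \NN P$ of Proposition \ref{pro:regLbar}, together with the preceding lemma identifying $\phi(b)$ with the length of the longest $b$-chain in the monomial $\oLa\phi$.

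For the forward direction, assume $I = \oLa\cJ^c$ and take $m = n m_B \in I$ subject to the $P$-stable hypothesis for some $a \in P$. I will write $m = \oLa\phi$ with $\phi \in \cJ^c$ and define $\psi_0 := \oLa^{-1}(nx_a)$; since $\cJ^c$ is a filter, proving $\psi_0 \geq \phi$ is enough, as it forces $nx_a = \oLa\psi_0 \in I$. The inequality $\psi_0(b) \geq \phi(b)$ will be reduced via the lemma to producing a $b$-chain of length $\phi(b)$ inside $nx_a$. A preliminary observation is that $a \in B$ forces $B = \{a\}$ (using the antichain condition together with $a \leq b$ for every $b \in B$, built into ``goes through $a$''), in which case $m = nx_a$ and the conclusion is trivial; so I assume $a \notin B$. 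For $b \notin B \cup \{a\}$, any $b$-chain in $m$ automatically avoids $B$, since a $p \in B$ with $p \leq b$ would have to equal $b$, so the chain already fits in $n$. For $b = a$, I append one more $a$ to a longest $a$-chain of $m$. The substantive case is $b \in B$: given a longest $b$-chain $C_b$ in $m$ going through $a$ (of length $\phi(b)$, containing at least one copy of $b$ since $x_b \mid m$), I delete a copy of $b$ and insert $a$ at a position allowed by the hypothesis. The resulting $b$-chain still has length $\phi(b)$, and its multiplicities are bounded by those of $nx_a$.

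For the backward direction, I set $\cJ^c := \oLa^{-1}(I)$ and show it is a poset filter. Since any $\psi \leq \psi'$ in $\Hom(P,\NN)$ is connected by a sequence of isotone covers that increment $\psi$ by one at a single element $q_0$ (with $\psi(r) > \psi(q_0)$ for every $r > q_0$ to preserve isotonicity), it suffices to treat a single cover. A direct calculation with the filters $F_i^\psi$ shows that such a cover only alters the antichain at level $\psi(q_0)+1$: $q_0$ enters, and the set $B' := \{p > q_0 : p \in A_{\psi(q_0)+1}\}$ leaves, giving the identity
\[ \oLa\psi' \cdot \prod_{p \in B'} x_p = \oLa\psi \cdot x_{q_0}. \]
Then $P$-stability is applied to $\tilde m := \oLa\psi \in I$, with antichain $B := B'$ and distinguished element $a := q_0$. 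The hypothesis ``there is a longest $b$-chain through $q_0$'' for each $b \in B'$ is verified by using that $q_0 < b$ and $\psi(q_0)$ equals the maximum of $\psi$ strictly below $b$: a longest $q_0$-chain concatenated with the $\psi(b)-\psi(q_0)$ available copies of $b$ furnishes a longest $b$-chain that either already contains $q_0$ or admits $q_0$ between its two halves. $P$-stability then yields $\oLa\psi' = (\tilde m / m_{B'}) \cdot x_{q_0} \in I$, completing the cover step; the degenerate case $B' = \emptyset$ is immediate from $\oLa\psi' = \oLa\psi \cdot x_{q_0}$.

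The principal obstacle is the multiplicity bookkeeping in the forward direction, namely verifying that $(C_b \setminus \{b\}) \cup \{a\}$ fits inside $nx_a$; this hinges on $a \notin B$ together with the antichain property of $B$, which precludes elements of $B \setminus \{b\}$ from appearing in any $b$-chain. The backward direction becomes routine once the effect of a single cover on $\oLa$ is isolated, because the chain-through-$q_0$ condition then matches the explicit construction used in the proof of the preceding lemma.
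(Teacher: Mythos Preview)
Your backward direction is correct and essentially the same as the paper's (the paper also increments $\psi$ at a single element, identifies the elements removed from the relevant antichain, and applies $P$-stability).

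Your forward direction, however, has a genuine gap in the case $b \notin B \cup \{a\}$. You claim that ``a $p \in B$ with $p \leq b$ would have to equal $b$,'' but this is simply false: $B$ is only an antichain, and nothing prevents some $b' \in B$ from lying strictly below $b$. For example, take $P = \{1 < 2 < 3\}$, $B = \{2\}$, $a = 1$, and $m = x_2^2 x_3$. Then $b = 3 \notin B \cup \{a\}$, yet the unique longest $3$-chain in $m$ is $2 \leq 2 \leq 3$, which uses the $B$-element $2$; this chain does not fit in $n = x_2 x_3$. (The conclusion $\psi_0(3) \geq \phi(3)$ is still true---there is the $3$-chain $1 \leq 2 \leq 3$ in $nx_a = x_1 x_2 x_3$---but your argument does not produce it.)

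Your overall strategy of proving $\psi_0 \geq \phi$ directly is sound and can be salvaged by inducting on the height of $b$. The key identity is that the multiplicity of $x_b$ in $\oLa\phi$ equals $\phi(b) - \max_{q<b}\phi(q)$; using this and the inductive hypothesis $\psi_0(q) \geq \phi(q)$ for $q < b$, one gets $\psi_0(b) \geq \phi(b)$ immediately whenever $b \notin B$, while the case $b \in B$ is handled by your chain-modification argument (which is correct). The paper, by contrast, takes a different route in the forward direction: it first reduces by induction on the maximal chain-length from $B$ down to $a$ to the case where every element of $B$ \emph{covers} $a$, and then treats that covering case by a direct manipulation of the isotone maps. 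Your direct approach is arguably more transparent once the gap is patched; the paper's induction avoids the need to control all $b \notin B$ simultaneously.
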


We prove this after the remark and the example.

\begin{remark} If  $a < b$  and $x_b$ divides $m$, we cannot say
whether $\frac{mx_a}{x_b} \in I$. Only if there is a longest $b$-chain 
going through $a$ we can say this. Our notion  of $P$-stable is
therefore quite distinct from the notion of $P$-Borel in
\cite{FMS}.
\end{remark}

\begin{example}
Let $P$ be the poset
\begin{center}
\begin{tikzpicture}[scale=1, vertices/.style={draw, fill=black, circle, inner sep=1pt}]
              \node [vertices, label=right:{$x$}] (0) at (-1.5+0,0){};
              \node [vertices, label=right:{$a$}] (1) at (0,0){};
              \node [vertices, label=right:{$y$}] (2) at (1.5,0){};
              \node [vertices, label=right:{$b$}] (4) at (-.75,1){};
              \node [vertices, label=right:{$c$}] (5) at (.75,1){};
              \foreach \to/\from in {0/4, 1/4, 1/5, 2/5}
      \draw [-] (\to)--(\from);
\end{tikzpicture}
\end{center}
and $m = x^4a^2y^3 bc^2$ (we write $p$ instead of $x_p$ for the variables). 
The only longest $b$-chain in $m$ is $x \leq x \leq x \leq x \leq b$.
If $m$ is in a $P$-stable ideal $I$ then $\frac{mx}{b} \in I$. 
There is no longest $b$-chain through $a$, so even though $a < b$ we do not
need to have $\frac{ma}{b} \in I$.

\medskip
Let $P$ be 
\begin{center}
\begin{tikzpicture}[scale=1, vertices/.style={draw, fill=black, circle, inner sep=1pt}]
              \node [vertices, label=right:{$a$}] (0) at (0,0){};
              \node [vertices, label=right:{$b$}] (1) at (-.75,1){};
              \node [vertices, label=right:{$c$}] (2) at (0,1){};
              \node [vertices, label=right:{$d$}] (3) at (.75,1){};
              \foreach \to/\from in {0/1, 0/2, 0/3}
      \draw [-] (\to)--(\from);
\end{tikzpicture}
\end{center}
and $m = ab^2c^3d^2$. Then each of $b,c,d$ have longest
chains in $m$ through $a$. Thus if $m$ is in a $P$-stable ideal $I$
then, letting $B = \{b,c,d\}$
\[ m^\prime = \frac{ma}{bcd}= a^2bc^2d \in I. \] Similarly 
\[ \frac{m^\prime a}{bcd} = a^3c \in I, \] 
\end{example}

\begin{remark}
When $P$ is a chain $[m] = \{1 < 2 < \cdots < m\}$, an ideal 
$I \sus \kr[x_{[m]}]$ is $P$-stable iff it is strongly stable,
see Section \ref{sec:ststable}. When $P$ is an antichain 
$\underline{m} = \{1,2, \ldots, m\}$ i.e. with no distinct comparables,
then any monomial  ideal  $I \sus \kr[x_{\underline{m}}]$ is $P$-stable.
\end{remark}

\begin{proof}[Proof of Proposition \ref{pro:MXPPstabil}.] We
will first show that if $I$ is $\oLa\cJ^c$ for some ideal $\cJ \sus
\Hom(P,\NN)$, then $I$ is $P$-stable. We divide into two parts. 
Recall that if $q > p$ and there are no $r$ with $q > r > p$, then 
$q$ is said to cover $p$. Let $m$ be a monomial in $I$. By Proposition
\ref{pro:regLbar} $m = \oLa\psi$ for some 
$\psi \in \Hom(P, \NN)$. Let $m = m_B \cdot n$ where $B$ is an 
antichain in $P$ and $a$ an element of $P$ such that for every $b \in B$
there is a longest $b$-chain through $a$. 

1. In the first part, let each of the elements of the set $B$ cover $a$.
For $b \in B$ a longest $b$-chain in $m$ through $a$, is then longer than a
a longest $a$-chain in $m$, and so $\psi(b) > \psi(a)$. 
Let $C \supseteq B$ be the set of all covers of $a$. 

Let $C^\prime \sus C$ be the $c \in C$ such that $\psi(c) = \psi(a)$.
Then a longest $c$-chain and a longest $a$-chain in $m$ have the same
length. Also $C^\prime$ is disjoint from $B$. Let $B^\prime = B \cup C^\prime$,
let $m^\prime = m \cdot m_{C^\prime}$ and let $m^\prime$ correspond to 
$\psi^\prime$ by $\oLa$. For $c \in C^\prime$ the longest $c$-chain in $m^\prime$
is one more than the longest $a$-chain in $m^\prime$. So $\psi^\prime(c) = 
\psi(a) + 1$. For $c \in (C \backslash C^\prime) \cup B$ clearly 
$\psi (c) > \psi(a)$ and so $\psi^\prime (c) > \psi(a) = \psi^\prime(a)$.
Thus $\psi^\prime(c) > \psi^\prime(a)$ for every $c$ covering $a$. 
Define $\psi^{\prime \prime}$ by 
\[ \psi^{\prime \prime}(p) = \begin{cases} \psi^\prime(p), & p \neq a \\
\psi^\prime (a) +1, & p = a \end{cases}.  \] 
Thus $\psi^{\prime \prime}$ is an isotone map covering $\psi^\prime$,
and $\psi^{\prime}$ corresponds to $m^\prime = n \cdot m_B \cdot m_{C^\prime}$
and $\psi^{\prime \prime}$ corresponds to $m^{\prime \prime} = n x_a$.

\medskip
2. In the second part we now induct on the longest chain from any
$b \in B$ to $a$. Let $a_1, \ldots, a_r$ be the elements covering $a$.
Then for every $b \in B$ there is a longest $b$-chain through some $a_i$
and $a$. Partition  $B = B_1 \cup \cdots \cup B_r$ such that the elements in 
$B_i$ have a longest $b$-chain through $a_i$ and $a$. By induction 
$n x_{a_1} m_{B_2} \cdots m_{B_r} \in I$ and $n x_{a_1}x_{a_2}m_{B_3} \cdots
m_{B_r} \in I$ and in the end $n x_{a_1} \cdots x_{a_r} \in I$. 
By part 1. above we now get $n x_a \in I$. 

\medskip
We now show that if $I$ is $P$-stable, then $I = \oLa\cJ^c$ for
some poset ideal $\cJ \sus \Hom(P, \NN)$. By Proposition \ref{pro:regLbar}, 
$I = \oLa\cF$ for some subset $\cF \sus \Hom(P, \NN)$. 
We want to show that $\cF$ is a filter. Given $\psi \in \cF$ and
$\tilde{\psi} > \psi$. Choose a maximal $a$ such that 
$\tilde{\psi}(a) > \psi(a)$.
Define $\psi^\prime$ by 
\[ \psi^\prime(p) = \begin{cases} \psi(p), & p \neq a \\
                       \psi(a) +1, & p = a
                    \end{cases}
\]
Then $\psi^\prime$ is an isotone map. Let $C \sus P$ be the elements 
$c$ covering $a$,
so $\psi(a) < \psi(c)$ for $c \in C$ (by definition of $a$). 
Let $C^\prime \sus C$ be the subset with 
a longest chain in $m$ through $a$. If $m = \oLa\psi$ then 
$\oLa\psi^\prime = \frac{mx_a}{m_{C^\prime}} $ which is in $I$
since $I$ is $P$-stable,
and so $\psi^\prime \in \cF$. In this way we can successively increase 
$\psi$ by isotone maps in $\cF$ and eventually reach $\tilde{\psi}$. 
Thus the latter is in $\cF$ and so $\cF$ is a poset filter.
\end{proof}

\begin{example}
The ideal $I = (a,b,c)^2 \sus \kr[a,b,c]$ is $P$-stable when $P$ is either the
chain or the antichain:
\begin{center}
\begin{tikzpicture}[scale=1, vertices/.style={draw, fill=black, circle, inner sep=1pt}]
              \node [vertices, label=right:{$a$}] (0) at (-2,-1){};
              \node [vertices, label=right:{$b$}] (1) at (-2,0){};
              \node [vertices, label=right:{$c$}] (2) at (-2,1){};
              \node [vertices, label=right:{$a$}] (4) at (1,0){};
              \node [vertices, label=right:{$b$}] (5) at (2,0){};
              \node [vertices, label=right:{$c$}] (6) at (3,0){};
              \foreach \to/\from in {0/1, 1/2}
      \draw [-] (\to)--(\from);
\end{tikzpicture}
\end{center}
However it is not $P$-stable if $P$ is
\begin{center}
\begin{tikzpicture}[scale=1, vertices/.style={draw, fill=black, circle, inner sep=1pt}]
              \node [vertices, label=right:{$a$}] (0) at (0,0){};
              \node [vertices, label=right:{$b$}] (1) at (-.75,1){};
              \node [vertices, label=right:{$c$}] (2) at (.75,1){};
              \foreach \to/\from in {0/1, 0/2}
      \draw [-] (\to)--(\from);
\end{tikzpicture}
\end{center}
Since $bc \in I$, being $P$-stable would imply $a \in I$, which is not so.
\end{example}

\begin{proposition} Let $\mm = (x_p)_{p \in P}$ be the maximal ideal of 
$\kr[x_P]$, and $d \geq 2$. Then the power $\mm^d$ is $P$-stable iff $P$
is a disjoint union of posets whose Hasse diagrams are rooted trees, with the
roots on top. 
\end{proposition}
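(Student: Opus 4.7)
The plan is to recast the condition on $P$ combinatorially: a finite poset whose Hasse diagram is a disjoint union of rooted trees with the roots on top is exactly a poset in which every element has at most one cover (the cover, when it exists, being the unique parent in the tree; maximal elements of each component are the roots). I will establish the equivalence with $P$-stability of $\mm^d$ via this reformulation, treating the two implications separately.

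For the forward direction I argue by contrapositive. Suppose some $a \in P$ has two distinct covers $b_1, b_2$; as minimal elements strictly above $a$, they are automatically incomparable, so $B = \{b_1, b_2\}$ is an antichain. I will test $P$-stability on the monomial
\[ m \; = \; x_a^{d-2}\, x_{b_1} x_{b_2}, \]
which lies in $\mm^d$ (its degree is $d$) and factors as $m = n \cdot m_B$ with $n = x_a^{d-2}$. The only elements $p \le b_i$ with $x_p$ dividing $m$ are $a$ and $b_i$ itself, so the longest $b_i$-chain in $m$ is the length-$(d-1)$ multichain $a \le a \le \cdots \le a \le b_i$ with $d-2$ copies of $a$; inserting one more $a$ yields a strictly longer valid multichain, so this chain goes through $a$. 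If $\mm^d$ were $P$-stable we would conclude $n x_a = x_a^{d-1} \in \mm^d$, contradicting $\deg x_a^{d-1} = d-1 < d$.

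For the backward direction, assume every element of $P$ has at most one cover. The key claim is that, whenever $m = n m_B \in \mm^d$ with $B$ an antichain and some $a \in P$ admits a longest $b$-chain in $m$ through $a$ for every $b \in B$, then necessarily $|B| \le 1$. Indeed, a chain through $a$ forces $a \le b$ for each $b \in B$; if $|B| \ge 2$, the case $a \in B$ is immediately ruled out by combining $a \le b'$ for $b' \in B \setminus \{a\}$ with the antichain property, so $a < b$ for every $b \in B$. But under the tree hypothesis the set of elements strictly above $a$ in $P$ is the unique upward path from $a$ to the root of its component, hence totally ordered, so any two elements of $B$ are comparable, contradicting antichain-ness. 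With $|B| \le 1$, the degree count
\[ \deg(n x_a) \; = \; \deg m - |B| + 1 \; \ge \; d - 1 + 1 \; = \; d \]
immediately gives $n x_a \in \mm^d$, proving $P$-stability. The main obstacle is not conceptual but rather verificational: in the forward direction, certifying that the explicit monomial really realizes the hypothesis of $P$-stability (the chain $a \le \cdots \le a \le b_i$ is genuinely of maximal length and genuinely goes through $a$); and in the backward direction, the compact treatment of the degenerate possibilities $a \in B$ and $|B| \in \{0,1\}$ so that a single degree inequality covers every case.
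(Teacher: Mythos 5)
Your proof is correct and follows essentially the same approach as the paper's: in the forward direction you exhibit a degree-$d$ monomial on which the $P$-stability axiom would force a degree-$(d-1)$ monomial into $\mm^d$ (you use $x_a^{d-2}x_{b_1}x_{b_2}$ where the paper uses $x_{b_1}x_{b_2}^{d-1}$, both of which work), and in the reverse direction you observe that the tree hypothesis forces $|B|\le 1$, reducing the axiom to a trivial degree comparison. The only difference is that you treat the edge cases $a\in B$ and $|B|=0$ slightly more explicitly than the paper does.
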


\begin{proof}
Suppose there are elements $a, b_1, b_2$ of $P$ with $a < b_1$ and 
$a < b_2$
and $\{ b_1, b_2 \}$ an antichain. Then $m = b_1b_2^{d-1} \in \mm^d$. 
The longest $b_1$-chain in $m$ is $b_1$ which goes through $a$. 
The longest $b_2$-chain is $b_2 \geq \cdots \geq b_2$ a number of $d-1$ times,
which also goes through $a$. By Proposition \ref{pro:MXPPstabil},
$ab_2^{d-2} \in \mm^d$ which is not so. Thus each $a$ in $P$ has at most
one cover and so $P$ is a disjoint union of rooted trees with roots
at the top.

On the other hand let $P$ be a disjoint union of posets whose Hasse
diagrams are trees with roots on top. If $B$ is an antichain consisting
of elements $\geq a$, then $B = \{ b \}$ for a single element $b$.
If a longest $b$-chain in a monomial goes through $a$, the criterion
for a monomial ideal $I \sus \kr[x_P]$ being $P$-stable is that if 
$x_b m \in I$ then $x_a m \in I$.  This holds for $I = \mm^d$.
\end{proof}

\subsection{Primary decomposition}
Let $S$ be a subset of $P$. A monomial prime ideal in $\kr[x_P]$ is
of the form $\fp(S) = (x_p)_{p \in S}$. 

\begin{lemma}
Let $I \sus P$ be a poset ideal. Then $\fp(I)$ is a $P$-stable monomial
prime ideal. All $P$-stable monomial prime ideals are of this form.
\end{lemma}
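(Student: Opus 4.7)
The plan is to verify the two directions of the biconditional separately, using the concrete criterion for $P$-stability.

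For the forward direction, I would start with a monomial $m \in \fp(I)$ written as $m = n\cdot m_B$ with $B$ an antichain and $a \in P$ such that every $b \in B$ has a longest $b$-chain in $m$ going through $a$. Since $\fp(I)$ is a monomial prime, $m \in \fp(I)$ is equivalent to some variable $x_p$ with $p \in I$ dividing $m$. If such an $x_p$ already divides $n$, then $x_p \mid n\cdot x_a$, so we are done immediately. Otherwise every such $p \in I$ with $x_p \mid m$ appears among $B$, so there is some $b \in B \cap I$. By the definition of ``goes through'', $a \leq b$, and since $I$ is downward closed this gives $a \in I$, so $n\cdot x_a \in \fp(I)$.

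For the reverse direction, every monomial prime of $\kr[x_P]$ is of the form $\fp(S)$ for a unique $S \sus P$, so the task is to show that if $\fp(S)$ is $P$-stable then $S$ is a poset ideal. Given $b \in S$ and $a \leq b$, I would apply $P$-stability to the monomial $m = x_b \in \fp(S)$, factored as $m = 1\cdot m_{\{b\}}$ with $n = 1$ and $B = \{b\}$. The unique longest $b$-chain in $m$ is the singleton $(b)$, and since $a \leq b$ it may be prepended to this chain, so the $b$-chain does go through $a$. $P$-stability then forces $n\cdot x_a = x_a \in \fp(S)$, hence $a \in S$.

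I do not anticipate a substantive obstacle: the only point requiring care is the correct reading of ``longest $b$-chain going through $a$'' in the trivial test monomial $m = x_b$, where the chain $(b)$ is the unique longest $b$-chain and the insertion of $a$ at its front witnesses the ``goes through'' condition. Once that is pinned down, both directions are immediate from the definitions.
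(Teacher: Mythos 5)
Your proof is correct and takes essentially the same approach as the paper: the reverse direction (testing $P$-stability on the single variable $x_b$) is identical, and the forward direction simply spells out the easy verification that the paper dismisses with ``clearly''. The only point worth noting is that your dichotomy in the forward direction relies on $x_p$ being a prime element, so that $x_p \mid n\,m_B$ forces $x_p \mid n$ or $p \in B$; you use this implicitly and it is fine.
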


\begin{proof}
If $I \sus P$ is a poset ideal, then clearly $\fp(I)$ is $P$-stable. 
Conversely, let $S \sus P$ and suppose $\fp(S)$ is $P$-stable. Let $b \in S$
so $x_b \in \fp(S)$. Then for $a < b$, the longest $b$-chain in $x_b$
goes through $a$ (it is just $b$ itself), and so $x_a \in \fp(S)$. 
Thus $S$ is a poset ideal.
\end{proof}

\begin{proposition}
Let $L \sus \kr[x_P]$ be a $P$-stable ideal. If $\fp$ is an associated
prime ideal of $L$, then $\fp = \fp(I)$ for some poset ideal $I \sus P$.
\end{proposition}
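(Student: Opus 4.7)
\medskip

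\noindent The plan is to show that for any associated prime $\fp$ of $L$, writing $\fp = \fp(S)$ as a monomial prime (possible because $L$ is a monomial ideal), the subset $S \sus P$ is a poset ideal; the preceding lemma then finishes the proof. By the colon characterization of associated primes of monomial ideals, I fix a monomial witness $m$ with $(L : m) = \fp(S)$, so that $S = \{p \in P : m x_p \in L\}$ and, because $\fp(S)$ is prime, no power $x_q^k$ with $q \notin S$ lies in $(L:m)$.

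\medskip

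\noindent Now fix $b \in S$ and $a < b$ in $P$, with the aim of showing $a \in S$. Since $m x_b \in L$, every multiple $m a^k b$ ($k \geq 0$) also lies in $L$. The strategy is to apply $P$-stability to $m a^k b$ with antichain $B = \{b\}$ and decomposition $n = m a^k$, $m_B = x_b$: if the longest $b$-chain in $m a^k b$ goes through $a$, then $n x_a = m a^{k+1} \in L$, so $x_a^{k+1} \in (L : m) = \fp$, and primality of $\fp$ forces $x_a \in \fp$, i.e., $a \in S$.

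\medskip

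\noindent The hard part is to verify that, for $k$ sufficiently large, the longest $b$-chain in $m a^k b$ does go through $a$. Writing $\phi = \oLa^{-1}(m a^k b)$ and $\psi = \oLa^{-1}(m)$, the recursion expressing $\phi(p)$ as the multiplicity of $p$ in $m a^k b$ plus $\max_{r < p} \phi(r)$ shows that $\phi(a) = \psi(a) + k$ grows linearly in $k$, whereas for every $q < b$ that is either incomparable to $a$ or below $a$ the multiplicity of $q$ and the subposet beneath $q$ are unaffected by adjoining copies of $a$ and $b$, so $\phi(q) = \psi(q)$ is bounded independently of $k$. Taking $k$ larger than the finite maximum of these bounded values, the argmax in the greedy construction of a longest $b$-chain (pick $q^* < b$ with $\phi(q^*)$ maximal, then recurse on $q^*$) must lie in the up-set $\{q : q \geq a\}$ at every level; since $P$ is finite, the strictly descending sequence of argmax-achievers, contained in $[a, b]$, must terminate at $a$. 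Hence $a$ appears as an element of the constructed chain, another copy of $a$ may be inserted at that position, and the chain goes through $a$ as required, which completes the argument along the outline above.
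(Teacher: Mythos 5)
Your proof is correct and follows exactly the argument given in the paper: fix a monomial witness $m$ with $(L:m)=\fp(S)$, consider $m x_a^k x_b$ for $b\in S$ and $a<b$, apply $P$-stability with the antichain $\{b\}$ to conclude $m x_a^{k+1}\in L$, and then invoke primality to get $x_a\in\fp(S)$. The only difference is that you flesh out, via the recursion $\phi(p)=\mathrm{mult}(p)+\max_{r<p}\phi(r)$ and the boundedness of $\phi$ on elements not above $a$, the paper's terse claim that for $k$ large the greedy longest $b$-chain in $m x_a^k x_b$ must pass through $a$.
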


\begin{proof}
Let $\fp(S)$ be a prime ideal annihilating $\overline{m} \in \kr[x_P]/L$,
so $m x_b \in L$ for $b \in S$. Let $a < b$. Then $mx_a^r x_b \in L$.
For $r$ large the longest $b$-chain will go through $a$. Therefore
$mx_a^{r+1} \in L$. If $mx_a \not \in L$ then $x_a \not \in \fp(S)$ but
$x_a^{r+1} \in \fp(S)$. This contradicts $\fp(S)$ being prime.
\end{proof}

\section{Principal poset ideals} \label{sec:fin}

In this section $\cJ \sus \Hom(P, \NN)$ will be a finite poset ideal.
Then we actually have $\cJ \sus \Hom(P, [n])$ for some $n$ and 
this is the situation studied in the previous articles
\cite{FGH} 
 and \cite{DFN-COLP}. 

By Corollary \ref{cor:regLbar} the monomial  ideal
$L^p(\cJ,P) \sus \kr[x_P]$ is then an artinian monomial ideal,
and so a Cohen-Macaulay ideal.
Since this quotient ring is obtained by cutting down from 
$\kr[x_{\Supp(\cJ)}]/L(\cJ,P)$ by a regular sequence, we see,
\cite[Thm. 5.9]{FGH}, that $L(\cJ,P)$ is a Cohen-Macaulay ideal
when $\cJ$ is a finite poset ideal.

\begin{remark} In \cite{DFN-COLP} it is shown that when $\cJ$ is a
finite poset, the letterplace ideal $L(\cJ,P)$ defines a simplicial
ball (save a few exceptions). Its boundary is therefore a simplicial 
sphere, whose Stanley-Reisner ideal is also precisely described
in \cite[Section 5]{DFN-COLP}
\end{remark}

For a finite poset ideal $\cJ$ of $\Hom(P, \NN)$ we define the 
{\it hull map} $\alpha(p) = \max \{ \phi(p) \, | \, p \in \cJ \}$.

\begin{lemma}
When $\cJ \sus P \times \NN$ is a finite poset ideal, its hull
map $\alpha$ is an isotone map, and
\[ \Supp(\cJ) = \{ (p,i) \, | \, i \leq \alpha(p) \} \]
This support is a finite poset ideal in $P^{\op} \times \NN$. 
\end{lemma}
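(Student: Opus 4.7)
The plan is to handle the three assertions in order: isotonicity of $\alpha$, the explicit support description, and the poset-ideal property. Isotonicity of $\alpha$ is immediate from the definition: for $p \leq q$ in $P$ and any $\phi \in \cJ$ one has $\phi(p) \leq \phi(q) \leq \alpha(q)$, so maximizing over $\phi \in \cJ$ yields $\alpha(p) \leq \alpha(q)$.

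The main step, and the one I expect to be the crux, is to exploit finiteness of $\cJ$ to pin down the minimal markers. I claim every marker $\alpha_0 \colon I \pil \NN$ of $\cJ$ must have $I = P$. Indeed, if $I$ is a proper poset ideal of $P$, pick $q \in P \setminus I$. Since $I$ is downward closed, no element of $I$ lies above $q$, so one can extend $\alpha_0$ isotonically to some $\phi \colon P \pil \NN$ by assigning any sufficiently large constant value $N$ to every element of $P \setminus I$. Finiteness of $\cJ$ makes $\alpha(q)$ finite, and any choice $N > \alpha(q)$ produces $\phi \not\in \cJ$, contradicting the marker property. Hence the markers of $\cJ$ are exactly the $\phi \in \cJ$ viewed as markers with full domain $P$. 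All such graphs have cardinality $|P|$, so no proper containment between them is possible, and every $\phi \in \cJ$ is automatically a minimal marker. This gives
\[ \Supp(\cJ) = \bigcup_{\phi \in \cJ} \Gamma \phi. \]

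It remains to identify this union with $\{(p,i) \mid i \leq \alpha(p)\}$. The inclusion $\subseteq$ is clear since $\phi(p) \leq \alpha(p)$ for every $\phi \in \cJ$. For $\supseteq$, given $i \leq \alpha(p)$, I pick (by finiteness) some $\phi_0 \in \cJ$ with $\phi_0(p) = \alpha(p)$ and truncate: set $\phi(q) = \min(\phi_0(q), i)$ for $q \leq p$ and $\phi(q) = \phi_0(q)$ for $q \not\leq p$. A routine case-check confirms $\phi$ is isotone (the only subtle case, $q \leq p$ but $q' \not\leq p$ with $q \leq q'$, reduces to $\min(\phi_0(q), i) \leq \phi_0(q) \leq \phi_0(q')$; the configuration $q \not\leq p$ and $q' \leq p$ is impossible). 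Since $\phi \leq \phi_0$ and $\cJ$ is a poset ideal, $\phi \in \cJ$, and $\phi(p) = i$, so $(p,i) \in \Gamma \phi \sus \Supp(\cJ)$.

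Finally, for the poset-ideal property in $P^{\op} \times \NN$: if $(p', j) \leq (p, i)$ there, meaning $p' \geq p$ in $P$ and $j \leq i$, then the already-proved isotonicity of $\alpha$ gives $j \leq i \leq \alpha(p) \leq \alpha(p')$, so $(p', j) \in \Supp(\cJ)$. Finiteness is immediate from $|P| < \infty$ and $\alpha(p) < \infty$ for every $p$.
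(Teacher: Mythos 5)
Your proof is correct. The paper itself offers no argument (its proof consists of the single phrase ``This is immediate to verify''), so you have simply supplied the verification the author leaves to the reader, and you have done so soundly: the isotonicity check for $\alpha$ is right; the observation that for finite $\cJ$ every marker must have domain $P$ (via extending a proper-domain marker by a large constant on $P \setminus I$, using that $I$ is a poset ideal to keep the extension isotone) is the key structural point and is argued correctly; the identification of $\Supp(\cJ)$ with $\bigcup_{\phi \in \cJ} \Gamma\phi$ then follows because all full-domain markers have graphs of the same cardinality $|P|$, hence are automatically inclusion-minimal and give the minimal generators of $L(P,\cJ)$; the truncation construction $\phi(q) = \min(\phi_0(q),i)$ for $q \leq p$ produces an isotone map $\leq \phi_0$ realizing $(p,i)$ in the support; and the $P^{\op}\times\NN$ poset-ideal property and finiteness are handled correctly. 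One cosmetic point: you may wish to note explicitly that the argument assumes $\cJ \neq \emptyset$, since otherwise the hull map's defining maximum is vacuous, but this is surely implicit in the paper as well.
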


\begin{proof}
This is immediate to verify.
\end{proof}

\begin{remark}
When $\cJ$ is not a finite poset ideal, the hull map needs
not be isotone, and the support $\Supp(\cJ)$ needs not be a poset
ideal in $P^\op \times \NN$.
\end{remark}

A class of finite poset ideals now comes out as distinguished: The
poset ideals consisting av {\it all} isotone maps $\phi$ less than or equal to
the hull map.

\begin{definition}
Given an isotone map $\alpha : P \pil \NN$ it induces a finite
poset ideal in $\Hom(P,\NN)$:
\[ \cJ(\alpha) = \{ \psi  \, | \, \psi \leq \alpha \}. \]
The {\it principal} co-letterplace ideal $L(P,\alpha)$ is $L(P,\cJ(\alpha))$ and
the {\it principal} letterplace ideal $L(\alpha,P)$ is its Alexander dual 
$L(\cJ(\alpha),P)$. 
\end{definition}

\begin{example}
When $P$ is an antichain $\underline{m} = \{1,2,\cdots,m \}$, 
and $p : \underline{m} \times \NN \pil \underline{m}$ the projection
to the first coordinate, then $L^p(\alpha,P)$ is the complete intersection
of monomials $x_1^{\alpha(1)+1}, x_2^{\alpha(2)+1}, \ldots, x_m^{\alpha(m)+1}$. 
Thus principal letterplace ideals for general $P$ 
may in some way be considered as
generalizations of complete intersections.
\end{example}

\begin{example} When $\alpha$ is the constant function $\alpha(p) = n+1$, 
then the principal co-letterplace ideal $L(P,\alpha)$ 
is the co-letterplace ideal 
$L(P,n+1)$ of \cite{FGH} (note that we get $n+1$ here since in 
\cite{FGH} the chain $[n]$ starts with $1$ but here $\mathbb{N}$ starts
with $0$, so the chain $[n+1]$ is isomorphic to the chain $[0,n]$), 
and the Alexander dual $L(\alpha,P)$ is the
letterplace ideal $L(n+1,P)$, see Proposition \ref{pro:finPaiso} below.
In \cite{FN-Rigid} the author and
A. Nematbakhsh computes the full deformation families of the letterplace
ideals $L(2,P)$ when the Hasse diagram of $P$ is a rooted tree. The
deformation theory, in analog with complete intersections, is extremely nice.
\end{example}

In the case of a principal poset ideal $\cJ(\alpha)$ 
there is a quite nice description of the minimal
generators of the letterplace ideal $L(\alpha,P)$.

For an isotone map $\alpha : P \pil \NN$
let $[0,i]$ be the integers from $0$ to $i$, and
define the poset ideal $I_i = \alpha^{-1}([0,i])$.
We get a filtration of poset ideals of $P$:
\[ I_0 \sus I_1 \sus \cdots \sus I_n \sus \cdots  .\]

\begin{theorem} \label{pro:finPaiso}
The principal letterplace ideal 
$L(\alpha,P)$ is generated by the monomials $m_{\Gamma \phi}$
associated to isotone maps $\phi : [0,i] \pil P$
such that $\phi(i) \in I_i$. The minimal generators are such
$\phi$ with $\phi(j) \not \in I_j$ for $j < i$. 
\end{theorem}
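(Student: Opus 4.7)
The plan is to apply Alexander duality (Proposition~\ref{pro:defiAlexdual}) after first pinning down the markers of the principal ideal $\cJ(\alpha)$. I claim these are precisely the isotone maps $\beta : P \pil \NN$ with $\beta \leq \alpha$: if $I \subsetneq P$ were a proper poset ideal and $\beta : I \pil \NN$ isotone, then for any $p \in P \setminus I$ one can extend $\beta$ to an isotone $\phi : P \pil \NN$ with $\phi(p)$ arbitrarily large, so $\phi \not\leq \alpha$. Hence $L(P,\alpha)$ is minimally generated by the monomials $m_{\Gamma \beta} = \prod_{p \in P} x_{p,\beta(p)}$ for $\beta \leq \alpha$.

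For the containment $(m_{\Gamma \phi}) \subseteq L(\alpha,P)$, by Alexander duality it suffices to check that $\Gamma \phi \cap \Gamma \beta \neq \emptyset$ for every $\beta \leq \alpha$. Given $\phi : [0,i] \pil P$ isotone with $\phi(i) \in I_i$, the sequence $\beta(\phi(0)) \leq \beta(\phi(1)) \leq \cdots \leq \beta(\phi(i))$ is nondecreasing with final term at most $\alpha(\phi(i)) \leq i$; a short induction rules out $\beta(\phi(j)) > j$ for every $j$, so some index satisfies $\beta(\phi(j)) = j$, placing $(\phi(j),j)$ in the intersection.

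For the reverse inclusion, take a squarefree minimal generator $m = x^S \in L(\alpha,P)$, so $S$ meets every $\Gamma \beta$ with $\beta \leq \alpha$. I build an adversarial greedy map by processing $P$ in a linear extension and setting $\beta^*(p) := \min\{k \geq \max_{q<p} \beta^*(q) : (p,k) \notin S\}$. Then $\beta^*$ is isotone with $(p,\beta^*(p)) \notin S$ for every $p$, so $\beta^* \leq \alpha$ would force $\Gamma \beta^*$ to avoid $S$; hence $\beta^*(p^*) > \alpha(p^*)$ for some $p^*$. Set $i := \alpha(p^*)$ and $j_0 := \max_{q < p^*} \beta^*(q) \leq i$; greediness at $p^*$ gives $(p^*,k) \in S$ for $j_0 \leq k \leq i$, and I put $\phi(j) := p^*$ on $[j_0, i]$. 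I then backfill $\phi$ on $[0, j_0 - 1]$ inductively with the invariant that at each position $(q,j)$ one has $\beta^*(q) > j$ and $(q,j) \in S$: either $j - 1 \geq \max_{q' < q} \beta^*(q')$ and setting $\phi(j-1) := q$ preserves the invariant, or $j = \max_{q' < q} \beta^*(q')$ and one passes laterally to some $q' < q$ with $\beta^*(q') = j$, iterating this lateral step until the invariant is restored. The recursion terminates since $P$ is finite, yielding $\phi : [0,i] \pil P$ isotone with $\phi(i) = p^* \in I_i$ and $\Gamma \phi \subseteq S$, so $m_{\Gamma \phi}$ divides $m$.

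The minimality statement then follows from the observation that $\Gamma \phi' \subseteq \Gamma \phi$ forces $\phi'$ to be the restriction $\phi|_{[0, i']}$ for some $i' \leq i$, and such a restriction is itself one of the listed generators exactly when $\phi(i') \in I_{i'}$. The main technical hurdle is the backfill step of the reverse inclusion: one must verify that the mixed recursion (descent in level versus lateral descent in $P$) always finds an element of $S$ and terminates. This is handled by strong induction on the pair $(j, q)$, noting that each lateral step strictly shrinks $q$ in the finite poset $P$ while each level-descent step strictly shrinks $j$.
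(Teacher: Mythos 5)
Your route is genuinely different from the paper's. The paper works on the letterplace side directly: it characterizes the inclusion-minimal ascents $\Lambda\psi$ for $\psi\not\leq\alpha$ as exactly the multichains $\{(p_0,0),\ldots,(p_r,r)\}$ with $\alpha(p_r)=r$ and $\alpha(p_j)>j$ for $j<r$ (proved via three short lemmas: a step-down lemma, a ``top'' lemma, and a realization lemma), and the theorem then falls out of the definition of $L(\cJ,P)$. You instead work on the co-letterplace side: you identify the markers of $\cJ(\alpha)$ as precisely the isotone $\beta\leq\alpha$ on all of $P$, pass through Proposition~\ref{pro:defiAlexdual}, and then produce a $\Gamma\phi\subseteq S$ by a greedy ``adversary'' construction. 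Both the marker identification and the easy inclusion (via the weakly increasing sequence $\beta(\phi(j))$ whose difference $\beta(\phi(j))-j$ starts $\geq 0$, ends $\leq 0$, and can drop by at most $1$) are correct. The minimality discussion at the end is also fine. Conceptually this is an attractive dual argument and, with repairs, does give a valid proof.

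The backfill step, however, has genuine gaps as written. First, you assert $j_0:=\max_{q<p^*}\beta^*(q)\leq i$ without choosing $p^*$ minimal (in $P$, or equivalently first in the processing order) among elements with $\beta^*(p^*)>\alpha(p^*)$; without that choice, $j_0\leq i$ can fail. Second, the invariant ``$\beta^*(q)>j$ and $(q,j)\in S$'' is too weak to support the dichotomy you invoke: without also carrying $\max_{q'<q}\beta^*(q')\leq j$, the ``or'' branch $\max_{q'<q}\beta^*(q')=j$ need not hold (it could be $>j$). Third, the lateral step is described inconsistently: you pass to $q'<q$ with $\beta^*(q')=j$, which contradicts the stated invariant $\beta^*(q')>j$ rather than restoring it. What should happen is: among $q'<q$ with $\beta^*(q')=j$, pick one that is $P$-minimal; then (using the strengthened invariant) $\max_{q''<q'}\beta^*(q'')\leq j-1$, so greediness gives $(q',j-1)\in S$, and the strengthened invariant holds at $(q',j-1)$, where now $\beta^*(q')=j>j-1$. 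With these three repairs the recursion is well-founded and terminates, and your argument becomes a correct alternative proof of the theorem.
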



The notion of principal ideal in a ring is an ideal generated by one element.
In contrast, principal letterplace ideals may have many generators.

\begin{example}
Let $\alpha : [3] \pil \NN$ be the isotone map
\[ 1 \mapsto 1, \quad 2 \mapsto 1, \quad 3 \mapsto 2. \]
The variables involved in the hull of $\alpha$ are the variables in
\begin{equation*}
\left [ 
\begin{matrix}  &  & x_{32} \\
x_{11} & x_{21} & x_{31} \\
x_{10} & x_{20} & x_{30} 
\end{matrix} \right ], 
\qquad
\begin{matrix}  2 \\ 1 \\ 0 
\end{matrix}
\underset{\begin{matrix} 1 & 2 & 3 \end{matrix}}
{\left [ \begin{matrix}  &  & \po \\
\po & \po & \po \\
\po & \po & \po 
\end{matrix} \right ]. }
\end{equation*}
The letterplace ideal $L(\alpha, [3])$ is minimally generated 
by the following monomials
\[ \begin{matrix} x_{10}x_{11} \\
x_{10}x_{21} \\
x_{20}x_{21}
\end{matrix}, \quad 
\begin{matrix} x_{10}x_{31}x_{32} \\
x_{20}x_{31}x_{32} \\
x_{30}x_{31}x_{32}.
\end{matrix}
\]
In general for an isotone map $\alpha : [m] \pil \NN$
the minimal generators of $L(\alpha,[m])$ are 
\[x_{\phi(0),0}x_{\phi(1),1}\cdots x_{\phi(r),r}\]
with $\phi$ isotone and $x_{\phi(j),j}$ in column $\phi(j)$ and row $j$. The last
variable $x_{\phi(r),r}$ is in top of its column i.e. $\alpha \circ \phi(r) = r$, 
while the previous variables are not in top of their columns, i.e.
$\alpha \circ \phi(j) > j$.
\end{example}

Before proving the above theorem we develop
some lemmata.

\begin{lemma} \label{lem:DefiStepdown}
Let $(p,i) \in \Lambda \phi$ where $i \geq 1$. Then there is a 
$q \leq p$ such that $(q,i-1) \in \Lambda\phi$.
\end{lemma}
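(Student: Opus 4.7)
The plan is to unpack the definition of the ascent and exhibit $q$ as a minimal element of a suitable subset of the principal order ideal below $p$.

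First I would rewrite what both membership statements mean. Saying $(p,i) \in \Lambda \phi$ is the statement that $\phi(q') \leq i < \phi(p)$ for every $q' < p$. What we want, $(q, i-1) \in \Lambda \phi$, translates to $\phi(r) \leq i-1 < \phi(q)$ for every $r < q$; equivalently, $\phi(q) \geq i$ and $\phi(r) < i$ for every $r < q$.

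Next I would introduce the set
\[ S = \{\, r \in P \,|\, r \leq p \text{ and } \phi(r) \geq i \,\}. \]
Since $\phi(p) > i$, certainly $p \in S$, so $S$ is nonempty, hence has at least one minimal element $q$. By construction $q \leq p$ and $\phi(q) \geq i$, which gives the right-hand inequality $i-1 < \phi(q)$. For the left-hand inequality, let $r < q$. Then $r < q \leq p$, so $r \leq p$; and by minimality of $q$ in $S$ we must have $\phi(r) < i$, i.e.\ $\phi(r) \leq i-1$. Combining, $(q, i-1) \in \Lambda \phi$ as required.

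There is essentially no obstacle here: the only slightly delicate point is making sure the candidate $q$ lies in the correct principal ideal so that the chain $r < q$ still sits below $p$ (and so the hypotheses inherited from $(p,i) \in \Lambda \phi$ remain applicable), but this is handled automatically by defining $S$ inside $\{r \leq p\}$. Isotonicity of $\phi$ is not even needed for the argument, only the defining inequalities of $\Lambda \phi$.
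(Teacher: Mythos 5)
Your proof is correct and essentially the same as the paper's: the paper splits into the case where every $q<p$ has $\phi(q)<i$ (giving $(p,i-1)\in\Lambda\phi$) and the case where some $q<p$ has $\phi(q)=i$ (taking a minimal such $q$), while you fold both cases into one by including $p$ itself in the set $S$ and taking a minimal element. The unification is a slight stylistic improvement but the underlying idea, choosing a minimal element below $p$ where $\phi$ is still $\geq i$, is identical.
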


\begin{proof}
If every $q < p$ has $\phi(q) < i$ then $(p,i-1) \in \Lambda\phi$.
Otherwise there is a minimal $q < p$ with $\phi(q) = i$. 
Then $(q,i-1) \in \Lambda\phi$. \end{proof}

\begin{lemma} \label{lem:DefiTopp} 
Suppose $\psi$ is not $\leq \phi$. Then there is $(p,i)
\in \Lambda\psi$ such that $i \geq \phi(p)$. 
\end{lemma}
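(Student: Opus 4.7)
The plan is to locate a witness $(p,i)$ by picking $p$ to be minimal in the ``bad set'' where $\psi$ strictly exceeds $\phi$. Concretely, let
\[ T = \{ p \in P \, | \, \psi(p) > \phi(p) \}. \]
The hypothesis $\psi \not\leq \phi$ says exactly $T \neq \emptyset$, so since $P$ is finite we may choose $p$ minimal in $T$. My candidate witness is then
\[ (p, i) := (p, \phi(p)). \]

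To verify that this lies in $\Lambda \psi$, I must check both defining conditions: (i) $i < \psi(p)$ and (ii) $\psi(q) \leq i$ for every $q < p$. Condition (i) is immediate from $p \in T$: $\phi(p) < \psi(p)$ by definition of $T$. For condition (ii), take any $q < p$. By minimality of $p$ in $T$ we have $q \notin T$, i.e.\ $\psi(q) \leq \phi(q)$; and since $\phi$ is isotone and $q < p$, we have $\phi(q) \leq \phi(p) = i$. Chaining these gives $\psi(q) \leq i$, as required.

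Finally the conclusion $i \geq \phi(p)$ is automatic, since we chose $i = \phi(p)$. So $(p,\phi(p))$ is the desired element of $\Lambda \psi$.

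There is essentially no obstacle here; the statement is of the ``pick the lowest offending element'' variety, and the only ingredient beyond the definitions is that $\phi$ is isotone (used to pull $\phi(q) \leq \phi(p)$) and that $T$, being a subset of a finite poset, has a minimal element.
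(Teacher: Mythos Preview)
Your proof is correct and follows essentially the same approach as the paper: both choose $p$ minimal with $\psi(p) > \phi(p)$ and verify the ascent condition using isotonicity of $\phi$. The only cosmetic difference is that the paper takes $i = \psi(p)-1$ while you take $i = \phi(p)$; both choices work equally well.
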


\begin{proof}
Let $p$ be minimal such that $\psi(p) > \phi(p)$. 
Let $i = \psi(p)- 1 \geq \phi(p)$. Then if $q < p$  
\[ \psi(q) \leq \phi(q) \leq \phi(p) <  \psi(p). \] 
So we see that $(p,i) \in \Lambda \psi$.
\end{proof}

\begin{lemma} \label{lem:DefiMultichain} 
Suppose $\phi(p) = r$. For any multichain $p_0 \leq p_1 \leq
\cdots \leq p_r = p$, there is some $\psi$ which is not $\leq \phi$
such that 
\[ \Lambda\psi = \{ (p_0,0), (p_1,1), \ldots, (p_r,r) \}. \]
\end{lemma}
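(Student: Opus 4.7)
The plan is to define $\psi$ explicitly by counting how many of the chain elements sit below each point:
\[ \psi(q) := |\{i : 0 \leq i \leq r,\, p_i \leq q\}|. \]
Isotonicity is immediate, since $p_i \leq q \leq q'$ implies $p_i \leq q'$. Because the $p_i$ form a multichain, if $k$ is the largest index with $p_k \leq q$ then all of $p_0, \ldots, p_k$ lie below $q$, and so $\psi(q) = k+1$; in particular $\psi(p) = \psi(p_r) = r+1$, whereas $\phi(p) = r$, so $\psi \not\leq \phi$, which secures the second condition in the statement.

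To verify $\Lambda\psi = \{(p_0, 0), \ldots, (p_r, r)\}$ I would argue both inclusions. For the containment $(p_i, i) \in \Lambda\psi$: first, $\psi(p_i) \geq i+1 > i$; second, for any $q' < p_i$ and any $k \geq i$ one has $p_k \geq p_i > q'$, which forces $p_k \not\leq q'$ (else $p_i \leq p_k \leq q' < p_i$), so $\psi(q') \leq i$. Conversely, suppose $(q, j) \in \Lambda\psi$. Then $j < \psi(q)$ forces $p_j \leq q$, while $\psi(q') \leq j$ for every $q' < q$ forces $p_j \not\leq q'$; hence $q$ is a minimum of the principal filter generated by $p_j$, which is just $p_j$ itself, so $q = p_j$. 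That $j \in [0, r]$ is clear because $\psi$ is bounded by $r+1$.

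The main (mild) obstacle is handling possible coincidences in the multichain: if $p_i = p_{i+1} = \cdots = p_j$, then $\psi(p_i) = j+1$, strictly larger than $i+1$, and one must check that the ascent conditions still pin down the pair $(p_i, i)$ for each $i$ in the collapsed block. This works because the conditions $\psi(q') \leq i < \psi(q)$ only refer to the \emph{values} of $\psi$ and not to the index labelling; the minimality argument of the previous paragraph applies uniformly and extracts the correct $q = p_j$ from any candidate $(q, j)$. Once this point is absorbed, the two inclusions complete the proof.
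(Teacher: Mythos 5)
Your construction $\psi(q) = |\{i : p_i \leq q\}|$ is exactly the same function the paper builds via the filter chain $F_i = \{q : q \geq p_{i-1}\}$ (both give $\psi(q) = k+1$ where $k$ is the largest index with $p_k \leq q$), and your verification of the two inclusions for $\Lambda\psi$ mirrors the paper's argument. The proof is correct and takes essentially the same approach; the worry about coincidences in the multichain that you flag at the end is, as you conclude, already absorbed by the argument, so it is a non-issue.
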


\begin{proof}
Let $F_0 = P$ and for $i = 1, \ldots, r+1$ let $F_{i}$ consist of all elements 
$\geq p_{i-1}$. For $p \in F_i \backslash F_{i+1}$ define $\psi(p) = i$.
Note that if 
\[ p_{u-1} < p_u = \ldots = p_v < p_{v+1},\] 
then $\psi(p_u) = \cdots = \psi(p_v) = v+1$ and $\psi(p_{u-1}) = u$. 
We see that $\psi(p_r) > r$ and so $\psi$ is not $\leq \phi$.
We see that all $(p_i,i)$ are in $\Lambda \psi$, and will show that
these are precisely the elements of $\Lambda \psi$.

Let $(q,t) \in \Lambda \psi$
with $\psi(q) = i+1$. Then $t\leq i$ and $q \in F_{i+1} \backslash F_{i+2}$
so $q \geq p_i$. Then since $F_{i+1} \backslash F_{i+2}$ is nonempty,
$p_{i+1} > p_i$ and so $\phi(p_i) = i+1$. If $q > p_i$ we could not
have $(q,t) \in \Lambda\psi$. Thus $q = p_i$.
Let  $p_{j-1} < p_j = \cdots = p_i < p_{i+1}$. Then $\phi(p_{j-1}) = j$
and we must have $j \leq t \leq i$. Thus 
$(q,t) = (p_t,t)$.
\end{proof}

\begin{proposition} Given a principal poset ideal $\cJ(\phi)$.
The inclusion minimal elements of the sets 
$\Lambda\psi$ for $\psi \in  \cJ(\phi)^c$ are sets
\[ \{ (p_0,0), (p_1,1), \ldots, (p_r,r) \} \] 
where:
\begin{itemize}\item $p_0 \leq p_1 \leq \cdots \leq p_r$ is a multichain
\item $\phi(p_r) = r$ 
\item $\phi(p_i) > i$ for $i < r$. 
\end{itemize}
\end{proposition}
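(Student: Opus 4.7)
The plan is to prove both inclusion directions of the stated characterization, combining Lemmas \ref{lem:DefiStepdown}, \ref{lem:DefiTopp} and \ref{lem:DefiMultichain} as the main tools. Call a multichain satisfying the three bulleted conditions \emph{admissible}.

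For the ``admissible $\Rightarrow$ inclusion minimal ascent'' direction, start with an admissible multichain $p_0 \leq \cdots \leq p_r$. Lemma \ref{lem:DefiMultichain} (whose hypothesis is precisely $\phi(p_r) = r$) produces an isotone map $\psi$ with $\Lambda\psi = \{(p_0,0),\ldots,(p_r,r)\}$ and $\psi \not\leq \phi$, so $\psi \in \cJ(\phi)^c$. To verify inclusion minimality, I take any $\psi' \in \cJ(\phi)^c$ with $\Lambda\psi' \sus \Lambda\psi$ and deduce equality. By Lemma \ref{lem:DefiTopp} some $(p_j,j) \in \Lambda\psi'$ satisfies $\phi(p_j) \leq j$; the admissibility bounds $\phi(p_i) > i$ for $i < r$ force $j = r$, so $(p_r,r) \in \Lambda\psi'$. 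Now I apply Lemma \ref{lem:DefiStepdown} iteratively: for $i = r-1, r-2, \ldots, 0$ it yields some $q \leq p_{i+1}$ with $(q,i) \in \Lambda\psi'$, and the inclusion $\Lambda\psi' \sus \{(p_0,0),\ldots,(p_r,r)\}$ forces $q = p_i$. Hence $\Lambda\psi' = \Lambda\psi$.

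For the converse direction, I extract an admissible multichain from an inclusion minimal $\Lambda\psi$. By Lemma \ref{lem:DefiTopp} there exists $(p,i) \in \Lambda\psi$ with $i \geq \phi(p)$; I select such a pair with $i$ \emph{minimal} and rename it $(p_r, r)$. Iteratively applying Lemma \ref{lem:DefiStepdown} produces $p_{r-1} \leq p_r$, then $p_{r-2} \leq p_{r-1}$, and so on down to $p_0$, all with $(p_i, i) \in \Lambda\psi$. The minimality of $r$ gives immediately $\phi(p_i) > i$ for $i < r$. Finally, Lemma \ref{lem:DefiMultichain} produces $\psi' \in \cJ(\phi)^c$ whose ascent equals $\{(p_0,0),\ldots,(p_r,r)\} \sus \Lambda\psi$, and inclusion minimality of $\Lambda\psi$ then forces equality.

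The main obstacle is ensuring $\phi(p_r) = r$ in this last argument, since Lemma \ref{lem:DefiTopp} alone only yields $\phi(p_r) \leq r$. To close this gap I use the minimality of $r$ once more: if $\phi(p_r) < r$ and $r \geq 1$, then $p_{r-1} \leq p_r$ together with the isotonicity of $\phi$ gives $\phi(p_{r-1}) \leq \phi(p_r) \leq r-1$, so $(p_{r-1}, r-1)$ would be a pair in $\Lambda\psi$ with second coordinate smaller than $r$ still satisfying $i \geq \phi(p)$, contradicting the minimal choice of $r$. The degenerate case $r = 0$ is immediate from the minimal choice. This is the only nontrivial point; everything else follows by routine application of the three preceding lemmata.
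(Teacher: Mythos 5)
Your proposal is correct and rests on the same three lemmata as the paper's proof, but it is substantially more explicit. The paper's proof is a single terse sentence for each of the three lemmata and really only makes explicit that admissible multichains give inclusion-minimal ascents; the converse direction (that every inclusion-minimal $\Lambda\psi$ is an admissible multichain) is left implicit. You spell out both directions, and in particular you identify and close the one genuinely subtle point: Lemma \ref{lem:DefiTopp} only yields $\phi(p_r) \leq r$ rather than equality, and your argument that the minimal choice of $r$ together with Lemma \ref{lem:DefiStepdown} and isotonicity of $\phi$ forces $\phi(p_r) = r$ (with the separate $r=0$ case) is exactly what is needed. This is a valid elaboration of the paper's argument rather than a different route.
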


\begin{proof} By Lemma \ref{lem:DefiMultichain} every such chain is 
$\Lambda \psi$ for some $\psi$ not  $\leq \phi$. 
By Lemma \ref{lem:DefiTopp} we cannot omit $(p_r,r)$ from such a chain, and
by Lemma \ref{lem:DefiStepdown} we cannot omit $(p_i,i)$ for $i < r$.
\end{proof}

\begin{proof}[Proof of Theorem \ref{pro:finPaiso}.]
This is immediate from Definition \ref{def:defiLP} and the proposition above.
\end{proof}


\begin{example}
A class of determinantal ideals considered by Bruns and Vetter in \cite{BrVe} 
and Herzog and Trung in \cite{HeTr}
comes from a pair of integer sequences
$M$:
\begin{align*}
0 &  = a_0 <  a_1 < a_2 < \cdots < a_r < a_{r+1} = m+1 \\
0 &  = b_0 <  b_1 < b_2 < \cdots < b_r < b_{r+1} = n+1.
\end{align*}
The ideals $I_M$ is generated by the $t$-minors of the matrix $(x_{ij})$ whose
indices is the hook subset of $[1,m] \times [1,n]$ consisting of pairs $(a,b)$
where $a < a_t$ or $b < b_t$, and successively letting $t = 1,2, \ldots, r+1$.
We explain here how the initial ideal of 
$I_M$ for a diagonal term order is a regular quotient of certain
principal letterplace ideals.

\medskip
Given sequences
\begin{align} \label{eq:FinPosAlpha}
0 = \alpha_0 \leq & \alpha_1 \leq \alpha_2 \leq \cdots \leq \alpha_r 
\leq \alpha_{r+1} = m-r \\
0 = \beta_0 \leq & \beta_1 \leq \beta_2 \leq \cdots \leq \beta_r \leq 
\beta_{r+1} = n-r, 
\notag
\end{align}
for $i = 0, \ldots, r$ define $H_i$ to be the
hook subset of $[\alpha_i+1,m-i] \times [\beta_i + 1,n-i]$ consisting of all
elements $(a,b)$ with $a \leq \alpha_{i+1}$ or $b \leq \beta_{i+1}$. 
(So $H_0$ is the whole $[1,m] \times [1,n]$ matrix with the 
submatrix $[\alpha_1 + 1,m] \times [\beta_1 +1,n]$ removed.)
Let $C_i = H_0 \cup H_1 \cup \cdots \cup H_i$ and 
$C = \cup_{i = 0}^r H_i$ which is a subset of $[1,m] \times [1,n]$ with
the induced poset structure. The filtration 
$C_0 \sus C_1 \sus \cdots$ defines a map $\tau : C \pil \NN$.
The support of the principal letterplace ideal $L(\tau,C)$ is the
following subset of $C \times \NN$:
\[ \tilde{C} = (C \times \{0 \}) \cup ((C \backslash C_0) \times \{ 1 \})
\cup ((C \backslash C_1) \times \{ 2 \}) \cup \cdots .\]
This is a finite subset. 
Let $\phi: \tilde{C} \pil \NN \times \NN$ be the natural
map sending an element $(a,b,p)$ of $(C \backslash C_{p-1}) \times \{p \}$ to 
$(a+p,b+p)$. The map $\phi$ is an isotone map with bistrict chain fibers.
The image of $\phi$ is in $B = [1,m] \times [1,n]$ and we get the ideal
$L^\phi(\tau,C) \sus \kr[x_B]$. 
For the sequences \eqref{eq:FinPosAlpha} we let $a_i = \alpha_i + i$
and $b_i = \beta_i + i$. Then the initial ideal of $I_M$ with respect
to a diagonal term order, is the ideal $L^\phi(\tau,C)$.

\end{example}

\section{Determinantal ideals specializing to principal 
letterplace ideals} \label{sec:detmax}

We consider isotone maps $\phi : [m] \pil \NN$ where $[m]$ is the chain
on $m$ elements. We will find
determinantal ideals whose initial ideals are the principal letterplace
ideals $L(\phi,[m])$. 
This is a class of determinantal ideals which seemingly has not 
been considered before. It is a class of determinantal ideals
which naturally generalizes the ideals generated by the {\it maximal
minors} of a matrix of distinct variables: If $\phi : [m] \pil \NN$ is
the constant map sending each $i \mapsto n$, then $L(\phi, [m])$ is the
initial ideal of the ideal of maximal minors of an $(n+1) \times (m+n)$ matrix
of distinct variables.
  
The naturality of this wider class of ideals is shown by the following
analogy:  If $\underline{m}$ is the antichain on $m$ elements, 
and $\psi : \underline{m} \pil \NN$ the constant map sending $i \mapsto n$,
the letterplace ideal $L(\psi, \underline{m})$ is the complete intersection
of $m$ monomials all of the same degree $n$. On the other hand if $\psi$ sends
$i \mapsto d_i$, then the letterplace ideal is the complete intersection of
monomials of degrees $d_1+1, d_2+1, \ldots, d_m+1$.

\subsection{A new class of determinantal ideals}
\label{subsec:DetmaxDefi}
Given a sequence 
\begin{equation} \label{eq:Detl}
\ell : \ell_a \leq \ell_{a+1} \leq \cdots \leq \ell_b. 
\end{equation}
To this sequence we associate a matrix $M(\ell)$ with columns
indexed by $\ell_a + 1, \ell_a + 2, \ldots, \ell_b$ and 
rows indexed by $a, a+1, \ldots, b-1$. 
  For $p \in [\ell_c + 1, \ell_{c+1}]$ we put the variable $y_{p,i}$ in position
$(p,i)$ if $a \leq i \leq c$ and we put $0$ in position $(p,i)$ if
$i > c$. Let $k[Y;\ell]$ be the polynomial ring generated by 
these variables.

For $a < c \leq b$ denote by $J_a^c$ the ideal  generated by the
$(c-a)$-minors (maximal minors) of the submatrix of $M(\ell)$ whose columns are 
$\ell_a + 1, \ell_a + 2, \ldots, \ell_c$ and rows $a,a+1, \ldots, c-1$.
Let 
\[ I(\ell) = \sum_{c = a+1}^b J_a^c. \]

\begin{example} \label{ex:DetRun}
To the sequence
 \begin{tabular}{c|c|c|c|c}
$l_0$ & $l_1$ & $l_2$ & $l_3$ & $l_4$ \\
\hline 
$0$ & $0$ & $3$ & $4$ & $6$
\end{tabular}
there is associated a $4 \times 6$ matrix with variables in the 
indicated positions
\[ \left [ \begin{matrix}  & & & & \po & \po \\
                           & & & \po & \po & \po \\
                           \po &\po &\po &\po &\po &\po \\
                          \po &\po &\po &\po &\po &\po
\end{matrix} \right ] \]
The ideal $I(\ell)$ is the ideal generated by the $2$-minors of the lower
left $2 \times 3$ matrix, the $3$-minors of the lower left $3 \times 4$-matrix,
and the $4$-minors of the whole matrix.
\end{example}

\begin{example} \label{ex:DetIndstart}
Let 
\[ \ell : 0 = \ell_0 = \cdots = \ell_{n} \leq \ell_{n+1} = n+m. \]
Then $M(\ell)$ is an $(n+1) \times (n+m)$-matrix of distinct variables.
The ideal $I(\ell)$ is the ideal of maximal minors of $M(\ell)$.
It is shown in \cite[Subsec.3.3]{FGH}, or originally \cite{Stu}, that the 
initial ideal of $I(\ell)$ is the letterplace ideal $L(n+1,[m])$ (with the
variables suitably reindexed). This example is our induction start for
the proof of the main Theorem \ref{thm:DetMain} of this section
\end{example}

We shall show that $I(\ell)$ is a Cohen-Macaulay ideal of codimension
the maximum of $\ell_d - \ell_a - (d-a) + 1$ for $d = a+1,\ldots, b$,
and that its initial ideal is a principal letterplace ideal $L(\phi,[m])$. 
It is worth noting the following.

\begin{lemma}
Let  $a < c \leq b$ and suppose $l_d - l_c \leq d-c$ for $d \geq c$.
Then $I(\ell_a, \ldots, \ell_b) = I(\ell_a, \ldots, \ell_c)$.
\end{lemma}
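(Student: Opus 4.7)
The plan is to show the stronger claim that $J_a^d \subseteq J_a^c$ for every $d$ with $c < d \leq b$; combined with the obvious inclusion $I(\ell_a,\ldots,\ell_c) \subseteq I(\ell_a,\ldots,\ell_b)$ this yields the lemma. Fix such a $d$ and let $\Delta$ be an arbitrary $(d-a)$-minor generating $J_a^d$: its rows are $a, a+1, \ldots, d-1$ and its column set $C \subseteq [\ell_a+1,\ell_d]$ has size $d-a$. Write $C = C_1 \sqcup C_2$ with $C_1 \subseteq [\ell_a+1,\ell_c]$ and $C_2 \subseteq [\ell_c+1,\ell_d]$.

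The first step is to exploit the staircase zero pattern of $M(\ell)$: a column $p \in [\ell_{c'}+1,\ell_{c'+1}]$ with $c' < c$ carries a zero in every row $r \geq c$. Hence the sub-block of $\Delta$ with rows $c, \ldots, d-1$ and columns $C_1$ is identically zero.

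Next I would use a counting argument to pin down $C_2$. The hypothesis $\ell_d - \ell_c \leq d-c$ gives $|C_2| \leq d-c$; on the other hand, the $d-c$ rows $c, \ldots, d-1$ of $\Delta$ are supported entirely on the columns of $C_2$, so if $|C_2| < d-c$ those rows are linearly dependent and $\Delta = 0$. For any non-vanishing generator we therefore have $\ell_d - \ell_c = d-c$, $C_2 = [\ell_c+1,\ell_d]$ in full, and $|C_1| = c-a$.

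With the upper-left block of $\Delta$ forced to be zero and the column partition fixed, I would conclude by Laplace expansion along rows $c, \ldots, d-1$: only the complementary minor on columns $C_2 = [\ell_c+1,\ell_d]$ contributes, giving $\Delta = \pm \det(D) \cdot \det(A)$, where $A$ is the $(c-a)\times(c-a)$ submatrix on rows $a, \ldots, c-1$ and columns $C_1$, and $D$ is the $(d-c)\times(d-c)$ submatrix on rows $c, \ldots, d-1$ and columns $[\ell_c+1,\ell_d]$. Since $\det(A)$ is a maximal minor of the matrix defining $J_a^c$, this shows $\Delta \in J_a^c$, proving $J_a^d \subseteq J_a^c$. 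The main obstacle is the counting step that forces $C_2$ to be taken in full; once that is established, the block Laplace expansion yields the factorization essentially for free.
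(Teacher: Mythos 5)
Your proof is correct and is essentially the paper's own argument: both exploit the staircase zero pattern (columns $\leq \ell_c$ vanish in rows $\geq c$), use $\ell_d - \ell_c \leq d-c$ to force any non-vanishing $(d-a)$-minor to take exactly the columns $\ell_c+1,\ldots,\ell_d$ in the upper block, and then factor via Laplace/block expansion into a $(c-a)$-minor lying in $J_a^c$ times a complementary determinant. The paper phrases the counting step in terms of the column indices $p_c, p_{c+1}$ rather than the partition $C_1 \sqcup C_2$, but the content is identical.
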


\begin{proof}
Let $c < d \leq b$ and consider a determinant $D$ of size $d-a$ in $J_a^d$.
It is given by columns in $M(\ell)$ in positions $p_{a+1}, p_{a+2}, \ldots,
p_d$, and $p_d \leq \ell_d$. Then by hypothesis $p_c \leq \ell_c$. 
If $p_{c+1} \leq \ell_c$, then the upper $d-c$ rows has at most
$d-c-1$ nonzero columns, and the determinant is zero. If
$p_{c+1} > \ell_c$ then by the hypothesis we must have $p_r = \ell_r = \ell_c
+ r-c$, for $r = c+1, \ldots, d$. The upper $d-c$ rows then is only nonzero
in the last $d-c$ columns, which is a lower triangular matrix with
determinant $D_2$. 
Thus the determinant is $D$ is a product of $D_1 \cdot D_2$ where
$D_1$ is a $(c-a)$-determinant consisting of the first $c-a$ columns and
the rows $p_{a+1}, \ldots, p_c$.
\end{proof}

\subsection{Sequences and letterplace ideals}
For integers $s \leq t$ let $[s,t]$ be the interval of integers
$r$ such that $s \leq r \leq t$. From an isotone map $\phi:[1,m] \pil \NN$
we get the letterplace ideal $L(\phi,[m])$. To gain more flexibility
we can take an interval $[u+1,v]$ with $m = v-u$ and an integer $a \geq 0$ 
and identify 
$\phi$ with a map $[u+1,v] \pil \NN_{\geq a}$ and get a letterplace ideal 
$L(\phi,[u+1,v])$ which we can identify with $L(\phi,[m])$. 
The isotone map $\phi : [u+1,v] \pil \NN_{\geq a}$ may be given by a sequence
\begin{equation} \label{eq:Deti} i : u = i_a \leq i_{a+1} \leq \cdots 
\leq i_b = v,
\end{equation}
such that $\phi(p) = c$ for $p \in [i_c+1,i_{c+1}]$. 
Let $\kr[X;i]$ be
the polynomial ring with variables $x_{p,i}$ where $a \leq i \leq c$ for 
$p \in [i_c +1, i_{c+1}]$. We get a principal letterplace ideal
$L(\phi;[i_a +1, i_b])$ in $\kr[X;i]$, which we denote by $L(i)$. 

To the sequence $i$ we associate a sequence $\ell$ given by 
\[ \begin{cases} l_a = i_a+a, & \\
l_c = i_c + c-1, & i_c > i_{c-1} \\
l_c = l_{c-1}, & i_c = i_{c-1}.
\end{cases} \]
Let $1 \leq c < d \leq b$ be indices such that $l_d > l_{d-1}$ and 
$l_c > l_{c-1}$, so $l_d$ and $l_c$ are the last
columns in $M(\ell)$ with precisely $d$, resp. $c$ variables, then
\[ l_d - l_c = i_d - i_c + (d-c) > d-c. \]
A sequence with this property is called a {\it terrace sequence}.
It means that in the associated matrix $M(\ell)$, for a step the horizontal
part is $>$ the vertical part, except possibly for the first step where
one has $\geq$.  
Given a terrace sequence, we may associate a weakly increasing sequence
\eqref{eq:Deti} by 
\[ \begin{cases} i_a = l_a - a \\ 
i_c = l_c - c+1, & l_c > l_{c-1} \\
i_c = i_{c-1}, & l_c = l_{c-1}.
\end{cases} \]

This gives a one-to-one correspondence between weakly increasing sequences
$i$ and terrace sequences $\ell$.
To facilitate our argument later, we need to introduce some more 
notions on sequences.
Given any weakly increasing sequence $\ell$ we can associate a
terrace sequence $\ell^\prime$ as follows. Consider 
\begin{equation} \label{eq:DetmaxMax}
l_a - a, l_{a+1} -a, l_{a+2} - (a+1), \cdots, l_b - (b-1). 
\end{equation}
Let the successive maxima after $m_0 = l_a - a$ 
as we move from left to right be for indices $m_1, \ldots, m_r$
where by convention the last maximum is $\infty$ in position $m_r = b+1$. 
\begin{example}
Let 
\[ l_2 = 3, 3, 5, 7, 8, 11 = l_7. \]
The associated matrix $M(\ell)$ of $y$-variables is (with bullets indicating variables):
\begin{equation*}
\begin{matrix} 6 \\ 5 \\ 4 \\ 3 \\ 2
\end{matrix} 
\underset{ \begin{matrix} 4 & 5 & 6 & 7 & 8 & 9 & 10 & 11 \end{matrix} }
{\left [ 
\begin{matrix}
  &   &  &  &  & \po & \po & \po \\
  &   &  &  & \po & \po & \po & \po \\
  &   & \po & \po & \po & \po & \po & \po \\
\po & \po & \po & \po & \po & \po & \po & \po \\
\po & \po & \po & \po & \po & \po & \po & \po \\
\end{matrix}
\right ]}
\end{equation*}

The difference sequence is
\[ l_2 - 2 = 1, 1, 2,3,3,5 = l_7 - 6. \]
The successive maxima after $1$ as we move from left to right are $2,3,5$ 
and $\infty$ 
in positions $4,5,7$ and $8$ (by the convention).
\end{example}

Let $\ell^\prime_{m_i} = \ell_{m_i}$ and for $p \in [m_i, \ldots m_{i+1}-1]$ 
let $\ell^\prime_p = \ell_{m_i}$. This is a terrace sequence, actually the
largest terrace sequence which is $\leq  \ell$. And this terrace sequence
corresponds to a weakly increasing sequence $i$. 
Note that the matrix $M(\ell)$ is a submatrix of $M(\ell^\prime)$. 

\begin{example}
Continuing the example above we get
\[ l_2^\prime = 3, 3, 5, 7, 7, 11 = l^\prime_7. \]
The associated matrix $M(\ell^\prime)$ of $y$-variables is (with bullets indicating variables):
\begin{equation*}
\begin{matrix} 6 \\ 5 \\ 4 \\ 3 \\ 2
\end{matrix} 
\underset{ \begin{matrix} 4 & 5 & 6 & 7 & 8 & 9 & 10 & 11 \end{matrix} }
{\left [ 
\begin{matrix}
  &   &  &  & \po & \po & \po & \po \\
  &   &  &  & \po & \po & \po & \po \\
  &   & \po & \po & \po & \po & \po & \po \\
\po & \po & \po & \po & \po & \po & \po & \po \\
\po & \po & \po & \po & \po & \po & \po & \po \\
\end{matrix}
\right ]}
\end{equation*}
The associated $i$-sequence to $\ell$ and $\ell^\prime$ is 
\[ i_2 = 1, 1, 2, 3, 3, 5 = i_7. \] We may display the associated set of
$x$-variables as:
\begin{equation*}
\begin{matrix} 6 \\ 5 \\ 4 \\ 3 \\ 2
\end{matrix} 
\underset{ \begin{matrix} 2 & 3 & 4 & 5  \end{matrix} }
{\left [ 
\begin{matrix}
  &   & \po & \po \\
  &   & \po & \po \\
  & \po & \po &  \po \\
\po & \po & \po & \po \\
\po & \po & \po & \po \\
\end{matrix}
\right ]}
\end{equation*}
We may think of the columns of the matrix of $x$-variables above as 
being transported
to diagonals in the matrices $M(\ell)$ and $M(\ell^\prime)$, 
but we see that there will be
additional $y$-variables in these matrices which we use for determinants.
\end{example}

\subsection{The main statement}
We get a map of polynomial rings $\kr[X;i] \pil \kr[Y;\ell]$ given by 
$x_{p,i} \mapsto y_{p+i,i}$ so we are shifting the columns of the $x$-matrix
to north-east diagonals in the $y$-matrix. The image of the letterplace ideal $L(i)$ then
generates an ideal in $\kr[Y;\ell]$ which we denote $L^Y(i)$.

\begin{theorem} \label{thm:DetMain}
Given a weakly increasing sequence $\ell$, let $\ell^\prime$ the largest
terrace sequence $\leq \ell$, and $i$ its associated weakly increasing
sequence. For a diagonal term order on $\kr[Y;\ell]$ we have 
$\text{in} (I(\ell)) = L^Y(i)$. 

Hence $I(\ell)$ is a Cohen-Macaulay ideal of codimension equal to 
\[ i_b - i_a = l^\prime_b - l^\prime_a - (b-a) +1 
= \max \{l_d - l_a - (d-a) + 1\,|\, d = a+1, \ldots, b\}. \]
\end{theorem}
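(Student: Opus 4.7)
The strategy is an induction on the length $b-a$ of the sequence, with the base case being Example~\ref{ex:DetIndstart}, i.e.\ the classical theorem of Sturmfels (as recalled in \cite{FGH}) that the initial ideal of the ideal of maximal minors of a generic $(n+1)\times(n+m)$ matrix under a diagonal term order is the letterplace ideal $L(n+1,[m])$. The first step of the proof is a reduction to the case where $\ell = \ell'$ is itself a terrace sequence: if $\ell_d > \ell'_d$ for some $d$, the lemma preceding this subsection applies to each maximal ``flat'' stretch violating the terrace inequality $l_d - l_c > d-c$ and removes the corresponding redundant minors, so $I(\ell)$ reduces to a determinantal ideal of the matrix $M(\ell')$ sitting naturally inside $\kr[Y;\ell]$. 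I may therefore assume $\ell = \ell'$ is a terrace sequence with associated weakly increasing sequence $i$ satisfying $l_c = i_c + c - 1$ whenever $l_c > l_{c-1}$.

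For such $\ell$, I would compute the leading monomial (under the diagonal term order) of each generator of $J_a^c$: a maximal minor picks a strictly increasing sequence $p_{a+1} < p_{a+2} < \cdots < p_c$ of column indices in $[\ell_a+1, \ell_c]$, and its leading term is $y_{p_{a+1},a}\, y_{p_{a+2},a+1}\cdots y_{p_c,c-1}$. Under the substitution $y_{p+i,i}\leftarrow x_{p,i}$, this pulls back to $x_{q_{a+1},a}\, x_{q_{a+2},a+1}\cdots x_{q_c,c-1}$ where $q_j = p_j - (j-1)$, and $q_{a+1}\leq q_{a+2}\leq \cdots \leq q_c$ is a weakly increasing chain. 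Comparing with the explicit description of $L(\phi,[m])$ in Theorem~\ref{pro:finPaiso}, one sees that these are exactly the monomials $x_{\phi(a),a}\cdots x_{\phi(c-1),c-1}$ with $\phi(c-1) \in I_{c-1}$ and (by the minimality of the $(c-a)$-minor, i.e.\ the fact that the entry $y_{p_c, c-1}$ sits at the ``top of its column'') $\phi(j)\notin I_j$ for $j<c-1$. Thus every minimal generator of $L^Y(i)$ appears as $\ini{g}$ for some minor $g\in I(\ell)$, giving $L^Y(i) \subseteq \ini{I(\ell)}$.

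For the reverse inclusion I would argue by matching codimensions: $L^Y(i)$ is Cohen--Macaulay of codimension $i_b-i_a$, this being inherited from the Cohen--Macaulayness of the principal letterplace ideal $L(\phi,[m])$ noted at the beginning of Section~\ref{sec:fin} together with Theorem~\ref{thm:regLP}, which makes $\kr[Y;\ell]/L^Y(i)$ a regular quotient of $\kr[X;i]/L(i)$. Since the minors listed above already achieve all minimal generators of $L^Y(i)$ as leading terms, and since $\mathrm{codim}\, \ini{I(\ell)} = \mathrm{codim}\, I(\ell)$ for any Gröbner degeneration, it suffices to bound $\mathrm{codim}\, I(\ell) \geq i_b - i_a$, which can be done by specializing the ``extra'' variables (to reduce to a generic-matrix subcase handled inductively) or by a direct height estimate on determinantal ideals. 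Equality of codimensions together with the containment forces $\ini{I(\ell)} = L^Y(i)$, and Cohen--Macaulayness of $I(\ell)$ then lifts from that of $L^Y(i)$ via flatness of the Gröbner degeneration.

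The main obstacle I expect is the first reduction from $\ell$ to $\ell'$: while the preceding lemma handles truncations on the right, extending it to the interior ``non-terrace'' positions and verifying that extra variables in $\kr[Y;\ell]$ not appearing in any minor do not alter the initial ideal (via the translation $y_{p+i,i}\leftarrow x_{p,i}$) requires careful bookkeeping. A secondary technical point will be to match precisely the ``top-of-column'' condition $\alpha\circ\phi(r)=r$ in Theorem~\ref{pro:finPaiso} with the combinatorics of which row indices appear as the largest in a nontrivial minor of $M(\ell)$, particularly at the steps of the terrace.
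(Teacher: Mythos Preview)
Your proof has a genuine gap in the ``reverse inclusion'' step. Having $L^Y(i) \subseteq \ini{I(\ell)}$ together with equality of codimensions does \emph{not} force equality of the ideals: for instance $(x^2, xy, y^2) \subsetneq (x,y)$ are both Cohen--Macaulay of codimension $2$ in $\kr[x,y]$. What is needed is equality of Hilbert functions, not merely of codimensions, and that is substantially harder to establish. Note also that the inequality you propose to prove, $\mathrm{codim}\, I(\ell) \geq i_b - i_a$, is the direction that already follows for free from the containment $L^Y(i) \subseteq \ini{I(\ell)}$; the nontrivial bound would be $\mathrm{codim}\, I(\ell) \leq i_b - i_a$, but even that would not close the gap.

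The paper's approach is quite different from yours. Rather than inducting on $b-a$ and reducing first to terrace sequences, it fixes $a$ and $b$ and inducts by successively \emph{increasing} one entry $l_c$ by one unit at a time, starting from the flat sequence $l_a = \cdots = l_{b-1} \leq l_b$ (the classical maximal-minors case). At each step it compares the determinantal ideals before and after via short exact sequences built from Lemma~\ref{lem:DetIrel}, and matches Hilbert functions term by term against the corresponding letterplace sequences, using the inductive hypothesis applied to the truncated sequences $\ell_c$, $\ell_c^\prime$, $\ell^c$ (Lemma~\ref{lem:DetII}) and the nonzerodivisor observation of Lemma~\ref{lem:DetVary}. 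This Hilbert-function comparison is precisely what closes the gap your codimension argument leaves open. Your proposed reduction from a general $\ell$ to its terrace envelope is never carried out as a separate step in the paper; the case of non-terrace $\ell$ is handled uniformly by the same induction over all weakly increasing sequences.
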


Before proving this we need some lemmata.
If $i^\prime$ is a segment of the sequence $i$, then $\kr[X;i^\prime]$
is a subring of $\kr[X;i]$. The ideal generated by $L(i^\prime)$ in 
the latter ring is denoted $\hat{L}(i^\prime)$. Similarly if $\ell^\prime$
is a segment of $\ell$, then $\kr[Y;\ell^\prime]$ is a subring
of $\kr[Y,\ell]$. The ideal generated by $I(\ell^\prime)$ in the latter
ring, will be denoted by $\hat{I}(\ell^\prime)$.

\begin{lemma} \label{lem:DetII}
Given weakly increasing sequences $i$ and $\ell$ as in \eqref{eq:Deti}
and \eqref{eq:Detl}. Let $a \leq c \leq b$. 
\begin{itemize}
\item[a.] $\hat{L}(i_a, \ldots, i_c) \sus L(i_a,\ldots,i_b) \sus
\hat{L}(i_a,\ldots,i_c) + \hat{L}(i_c,\ldots,i_b)$
\item[b.] $\hat{I}(l_a, \ldots, l_c) \sus I(l_a,\ldots,l_b) \sus
\hat{I}(l_a,\ldots,l_c) + \hat{I}(l_c,\ldots,l_b)$
\end{itemize}
\end{lemma}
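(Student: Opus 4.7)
The plan is to exploit the explicit descriptions of the minimal generators of both sides.

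For part (a), by Theorem \ref{pro:finPaiso} a minimal generator of $L(i_a,\ldots,i_b)$ is a monomial $x_{p_a,a}\,x_{p_{a+1},a+1}\cdots x_{p_r,r}$ coming from a multichain $p_a \leq p_{a+1} \leq \cdots \leq p_r$ with $i_r < p_r \leq i_{r+1}$ and $p_j > i_{j+1}$ for $a \leq j < r$. The first inclusion is then immediate: such a generator of $L(i_a,\ldots,i_c)$ necessarily has $r \leq c-1$, and its defining conditions use only the initial segment $i_a,\ldots,i_{r+1}$ of the sequence, so it is also a minimal generator of the larger ideal. For the second inclusion, if $r \leq c-1$ the generator already lies in $\hat L(i_a,\ldots,i_c)$; otherwise $r \geq c$, and the truncation $x_{p_c,c}\cdots x_{p_r,r}$ satisfies all the defining conditions for a minimal generator of $L(i_c,\ldots,i_b)$ (the inequalities $p_j > i_{j+1}$ for $c \leq j < r$ and $i_r < p_r \leq i_{r+1}$ are inherited). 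Since the original monomial is a multiple of this truncation, it lies in $\hat L(i_c,\ldots,i_b)$.

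For part (b), the first inclusion is immediate since a $(d-a)$-minor generating $J_a^d$ for $d \leq c$ is the same minor whether computed inside $M(l_a,\ldots,l_c)$ or inside $M(l_a,\ldots,l_b)$. The substantive direction is the second inclusion. Take a generator $\Delta$ of $I(l_a,\ldots,l_b)$, namely a $(d-a)$-minor on rows $a,\ldots,d-1$ and some columns $p_1 < \cdots < p_{d-a}$ in $[l_a+1,l_d]$. If $d \leq c$ then $\Delta \in \hat I(l_a,\ldots,l_c)$ and we are done. Otherwise $d > c$, and the plan is to Laplace-expand along the row split $[a,c-1] \sqcup [c,d-1]$:
\[ \Delta = \sum_{|S|=d-c} \pm \det(M_{\text{top},S})\,\det(M_{\text{bot},S^c}), \]
where $S$ ranges over $(d-c)$-subsets of the column index set. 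The key observation is that the entry of $M(\ell)$ in position $(p,i)$ vanishes precisely when $p \leq l_i$. Consequently, any column $p \leq l_c$ placed in $S$ is identically zero on the top block of rows $i \in [c,d-1]$ (since $l_i \geq l_c \geq p$), killing that term of the Laplace sum. Only terms with $S \sus (l_c,l_d]$ survive, and for such $S$ the factor $\det(M_{\text{top},S})$ is precisely a $(d-c)$-minor of the submatrix with rows $c,\ldots,d-1$ and columns $l_c+1,\ldots,l_d$, i.e.\ a generator of $J_c^d \sus I(l_c,\ldots,l_b)$. Hence every surviving term lies in $\hat I(l_c,\ldots,l_b)$, and so does $\Delta$.

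The main obstacle I anticipate is bookkeeping with the staircase zero pattern of $M(\ell)$: the zeros accumulate in the upper-left, and it is exactly this shape that forces columns with $p \leq l_c$ to vanish on the upper block of rows. Once that is handled correctly, the identification of the surviving top factor with a generator of $J_c^d$ is mechanical, and the result follows.
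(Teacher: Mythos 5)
Your argument is correct and follows the same route as the paper's: for (a) you invoke the description of minimal generators from Theorem \ref{pro:finPaiso} and truncate at index $c$, and for (b) you Laplace-expand along the row block $[c,d-1]$ and observe that the staircase zero pattern kills every term whose top-block columns include any $p\leq l_c$, so the surviving top factors lie in $J_c^d$. The paper states part (b) more tersely ("by expanding the determinant by the upper $(d-c)$ rows"), leaving the vanishing analysis implicit; your write-up simply spells out that step.
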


\begin{proof}
The first inclusions are immediate from the definitions of these
ideals. We then prove the second inclusions.

a. By Proposition \ref{pro:finPaiso} a minimal generator of 
$L(i_a,\ldots,i_b)$ is the monomial associated to 
\[ (p_a,a),(p_{a+1},a+1), \ldots, (p_d,d)\] where $p_s > i_{s+1}$
for $s < d$ (due to minimality of the generator) and
$p_d \leq i_{d+1}$. If $d < c$, this monomial is in $L(i_a,\ldots,i_c)$.
If $d \geq c$ then the monomial associated to $(p_c,c),\ldots, (p_d,d)$
is in $L(i_c,\ldots,i_b)$.

\medskip
b. Suppose we have a
$(d-a)$-determinant in $J_a^d$ where $d \leq b$. 
If $d \leq c$ then clearly this determinant is in the first summand.
Suppose $d > c$. Let the columns of the determinant be in
positions $q_{a+1}, \ldots, q_d$. Since this determinant is in $J_a^d$ we
have $q_d \leq l_d$. Then by expanding the determinant by the upper
$(d-c)$ rows we see that the determinant is in the second summand.
\end{proof}

\begin{lemma} \label{lem:DetVary}
Suppose $l_c < l_{c+1}$ and let $i$ be associated to $\ell$.
Then the variable $y_{l_c +1,c}$ does not occur
in a minimal  generator of $L^Y(i)$. 
\end{lemma}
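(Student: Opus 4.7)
The plan is to argue by contradiction. Suppose $y_{l_c+1,c}$ divides some minimal generator $g$ of $L^Y(i)$. Then $g$ is the image, under $x_{p,d}\mapsto y_{p+d,d}$, of a minimal generator of the principal letterplace ideal $L(i)=L(\phi,[i_a+1,i_b])$, where $\phi^{-1}(d)=[i_d+1,i_{d+1}]$. By Theorem~\ref{pro:finPaiso} such a generator has the form $\prod_{d=a}^{e} x_{\psi(d),d}$ for some weakly increasing map $\psi:[a,e]\pil[i_a+1,i_b]$ satisfying $\psi(e)\leq i_{e+1}$ and $\psi(d)>i_{d+1}$ for each $a\leq d<e$. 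Divisibility by $y_{l_c+1,c}$ forces $a\leq c\leq e$ and $\psi(c)=l_c-c+1$.

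The first step is to show $c=e$. From $l_{c+1}>l_c$ the recursion defining $i$ in terms of $\ell$ gives $i_{c+1}=l_{c+1}-c$. If $c<e$ were to hold, then $\psi(c)>i_{c+1}$ would read $l_c-c+1>l_{c+1}-c$, i.e.\ $l_c\geq l_{c+1}$, contradicting the hypothesis. Hence $c=e$, and the upper bound $\psi(c)\leq i_{c+1}$ is automatic from $l_c<l_{c+1}$.

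It therefore suffices to contradict the predecessor constraint $\psi(c-1)>i_c$ (in the relevant range $c>a$). Let $j\in[a,c]$ be maximal with $l_j>l_{j-1}$, or $j=a$ if no such index exists. Unfolding the recursion for $i$ gives $i_c=l_j-j+1=l_c-j+1$ in the first case (using $l_j=l_{j+1}=\cdots=l_c$ by maximality of $j$), and $i_c=l_a-a=l_c-a$ in the second; a direct check shows $i_c\geq l_c-c+1=\psi(c)$ in both regimes (respectively because $j\leq c$ or because $a<c$). Combined with $\psi(c-1)\leq\psi(c)$ this yields $\psi(c-1)>i_c\geq\psi(c)\geq\psi(c-1)$, the desired contradiction. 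The main subtlety to manage carefully is the two-case recursion for $i_c$ across plateaus of $\ell$, but both regimes collapse uniformly to $i_c\geq\psi(c)$ because the recursion carries the last value $l_j-j+1$ forward along every plateau.
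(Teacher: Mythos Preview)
Your argument has a genuine gap: you apply the recursion
\[
i_{c+1}=l_{c+1}-c,\qquad i_c=l_j-j+1\ (\text{or }l_a-a),
\]
as though the sequence $i$ were defined directly from $\ell$. But the association $i\leftrightarrow\ell$ via that recursion is only set up for \emph{terrace} sequences; for a general weakly increasing $\ell$ (which is the setting of the lemma, as the paper's own proof makes clear by invoking the successive-maxima construction \eqref{eq:DetmaxMax}), one first passes to the largest terrace sequence $\ell'\leq\ell$ and then reads off $i$. Your formulas can fail: for $\ell=(0,10,10,11,1000)$ with $a=0$ and $c=3$, your last step is $j=3$ and you compute $i_3=l_3-3+1=9$, whereas the successive maxima give $\ell'=(0,10,10,10,1000)$ and $i_3=10$. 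So your justification of the key inequalities $i_{c+1}\geq l_{c+1}-c$ and $i_c\geq l_c-c+1$ is invalid, even though the inequalities themselves are correct.

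The paper's proof establishes exactly the inequality $i_c\geq l_c+1-c$, but directly via the successive maxima of \eqref{eq:DetmaxMax}: if $d\leq c$ is the last position where a new maximum occurs, then $i_c\geq i_d=l_d-d+1\geq l_c-c+1$; and if no such $d$ exists, then $l_a-a\geq l_c+1-c$ gives the same bound. Once you have $i_c\geq l_c+1-c$, the conclusion is immediate and simpler than your route through minimal generators: the variable $x_{l_c+1-c,c}$ does not even exist in $\kr[X;i]$ (the second index $c$ requires first index $>i_c$), so its image $y_{l_c+1,c}$ cannot occur in any generator of $L^Y(i)$. Your detour via Theorem~\ref{pro:finPaiso} ultimately needs the same inequality, so it buys nothing; to repair your argument you would in effect have to reproduce the paper's successive-maxima step. (A side remark: both arguments tacitly require $c>a$; for $c=a$ the variable $y_{l_a+1,a}$ genuinely is a minimal generator of $L^Y(i)$, but this case does not arise in the induction of Theorem~\ref{thm:DetMain}.)
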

\begin{proof} If there is a previous maximum $l_d + 1 -d$ in the sequence
\eqref{eq:DetmaxMax} at or before $l_c + 1 -c$, let $d$ be maximal such. 
Then $d \leq c$ and 
and $l_d +1 - d \geq l_c +1 -c$. Then $l^\prime_d = l_d$ and 
$i_d = l_d +1 -d$. But then there is no variable $x_{i_d,d}$ and so 
no variable $x_{l_c +1 -c, c}$ and so no variable $y_{l_c+1,c}$ in $L^Y(i)$. 

If there is no previous maximum in the sequence \eqref{eq:DetmaxMax} before
$l_c +1 - c$, then $l_a -a \geq l_c +1 -c$ and an analog argument implies
that there is no $y_{l_c +1,c}$ variable. 
\end{proof}


Given $\ell$ with $l_c < l_{c+1}$. Let 
\begin{align*} 
\ell^\prime :\,\, &  l_a \leq \cdots \leq l_c +1 \leq l_{c+1} \leq \cdots \leq l_b.\\
\ell_c :\,\,  & l_a  \leq \cdots \leq l_c \\
\ell^\prime_{c} : \,\, & l_a  \leq \cdots \leq l_c+1 \\
\ell^{c} :\,\,  & l_c + 1 \leq \cdots \leq l_b
\end{align*}

Let $i$ be associated to $\ell$. We then correspondingly have 
$i^\prime, i_c, i^\prime_c, i^c$. 
The ring $\kr[Y;\ell^\prime]$ is $\kr[Y;\ell]/(y_{l_c +1,c})$. Let 
$\oI(\ell)$ be the image of $I(\ell)$ in the latter ring. Also let
$\hI(\ell_c)$ be the ideal generated by $I(\ell_c)$ in this ring, and
so on.

\begin{lemma} \label{lem:DetIrel}
\begin{itemize}
\item[a.] $\oI(\ell) \sus \hI(\ell_c) + \hI(\ell^c)$
\item[b.] $I(\ell^\prime) = \oI(\ell) + \hI(\ell^\prime_c)$
\item[c.] $\hI(\ell^\prime_c) \cdot \hI(\ell^c) \sus \oI(\ell)$
\end{itemize}
\end{lemma}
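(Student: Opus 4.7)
The plan exploits the fact that $M(\ell)$ and $M(\ell^\prime)$ differ only in the single entry at position $(l_c+1,c)$: in $M(\ell)$ this is the variable $y_{l_c+1,c}$, in $M(\ell^\prime)$ it is zero, so $\kr[Y;\ell^\prime] = \kr[Y;\ell]/(y_{l_c+1,c})$. All three ideals in sight are generated by minors on specific windows of rows and columns, and for each of (a), (b), (c) the argument reduces to a Laplace expansion along the upper rows $c, c+1, \ldots$ followed by tracking which terms survive modulo $y_{l_c+1,c}$.

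For (a), take a generator $D \in J_a^d \sus I(\ell)$, a $(d-a)$-minor with columns $q_{a+1} < \cdots < q_d \leq l_d$. If $d \leq c$ then $D$ already lies in $\hI(\ell_c)$. If $d > c$, I expand $D$ along the upper rows $c, \ldots, d-1$; in $M(\ell^\prime)$ each of these rows has nonzero entries only in columns $\geq l_c+2$, so every nonzero upper $(d-c)$-minor uses columns from $[l_c+2, l_d]$ and rows $c, \ldots, d-1$, and is thus a generator of $J_c^d \sus I(\ell^c)$. Hence $\bar D \in \hI(\ell^c)$.

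For (b), the containment $\oI(\ell) + \hI(\ell^\prime_c) \sus I(\ell^\prime)$ is immediate from the definitions, using that a minor of $M(\ell)$ and the corresponding minor of $M(\ell^\prime)$ coincide modulo $y_{l_c+1,c}$. For the reverse direction, a generator $D \in J_a^d(\ell^\prime)$ with $d \leq c$ has columns $\leq l^\prime_c = l_c+1$ and already lies in $\hI(\ell^\prime_c)$; if $d > c$ then $l^\prime_d = l_d$, so the same row/column window defines a minor $\tilde D \in J_a^d(\ell) \sus I(\ell)$ whose image in $\kr[Y;\ell^\prime]$ is exactly $D$, giving $D \in \oI(\ell)$.

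Part (c) is the crux and the main obstacle. Given generators $A$ of $\hI(\ell^\prime_c)$ of size $c-a$ on columns $p_{a+1} < \cdots < p_c \leq l_c+1$ and $B$ of $\hI(\ell^c)$ of size $d''-c$ on columns $l_c+2 \leq r_{c+1} < \cdots < r_{d''} \leq l_{d''}$, the case $p_c \leq l_c$ is immediate since then $A \in J_a^c \sus I(\ell)$. When $p_c = l_c+1$, I consider the lifted $(d''-a)$-minor $M$ of $M(\ell)$ on rows $a, \ldots, d''-1$ and columns $p_{a+1}, \ldots, p_c, r_{c+1}, \ldots, r_{d''}$; since $r_{d''} \leq l_{d''}$ this lies in $J_a^{d''} \sus I(\ell)$. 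A Laplace expansion of $M$ along the upper rows $c, \ldots, d''-1$ has exactly one term that avoids column $l_c+1 = p_c$ in the upper block, and that term equals $\pm A \cdot B$; every other nonvanishing term places column $l_c+1$ in the upper block, where its only nonzero entry is $y_{l_c+1,c}$ in row $c$, so such terms carry a factor of $y_{l_c+1,c}$ and die in $\kr[Y;\ell^\prime]$. Thus $\bar M = \pm \bar A \bar B$, so the product lies in $\oI(\ell)$. The technical point to verify carefully is that the structural zeros in $M(\ell)$ above position $(l_c+1,c)$ really do force every such term to carry the factor $y_{l_c+1,c}$; this uses that column $l_c+1$ of $M(\ell)$ has nonzero entries only in rows $a, \ldots, c$, which is the defining shape of the matrix.
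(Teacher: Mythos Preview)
Your argument is correct and follows the same Laplace-expansion approach as the paper. In part (a) the paper quotes Lemma~\ref{lem:DetII}(b) and then reduces modulo $y_{l_c+1,c}$, whereas you reprove that lemma inline; in part (b) you are in fact a bit more explicit than the paper, which only writes out the inclusion $I(\ell^\prime)\subseteq\oI(\ell)+\hI(\ell^\prime_c)$.

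There is one small slip in part (c): you begin with ``given generators $A$ of $\hI(\ell^\prime_c)$ of size $c-a$'', but the generators of $I(\ell^\prime_c)$ are $(e-a)$-minors for all $a<e\le c$, not just $e=c$. The paper handles this by starting with an $(e-a)$-minor $E$ with last column $q_e\le l_c+1$ and observing that if $q_e\le l_c$ then $E\in I(\ell)$ already, while $q_e=l_c+1$ forces $e=c$ since $l^\prime_e=l_e\le l_c$ for $e<c$. Your ``case $p_c\le l_c$'' covers this in spirit, but as written it presupposes $e=c$; add a sentence noting that generators of size $e-a$ with $e<c$ have all columns $\le l_e\le l_c$ and hence lie in $J_a^e\subseteq I(\ell)$ automatically.
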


\begin{proof} 
a. By Lemma \ref{lem:DetII}.b we have 
\[ I(\ell) \sus \hI(\ell_c) + \hI(\ell^c) + (y_{l_c+1,c}) \]
in $\kr[Y;\ell]$. Then map this down to $\kr[Y;\ell^\prime]$.

b. Given a $(d-a)$-determinant in $J_a^d$ of $I(\ell^\prime)$ and let
$q_{a+1}, \ldots, q_d$ be the columns involved in this determinant. If
$d \leq c$ then $q_d \leq l_c +1$ and this determinant is in $\hI(\ell^\prime_c)$.
If $d > c$ then any such determinant of $I(\ell^\prime)$ and $I(\ell)$ are the
same modulo $y_{l_c + 1,c}$. 

c. Let $E$ and $F$ be determinants in the first and second factor.
Let $E$ be an $(e-a)$-determinant of $I(\ell^\prime_c)$ involving 
columns $q_{a+1}, \cdots, q_e$. So  $e \leq c$ and $q_e \leq l_c +1$. 
If $q_e \leq l_c$ then $E$ is 
in $I(\ell)$ and so in $\oI(\ell)$. Suppose $q_e = l_c+1$.
Then we are considering a $(c-a)$-determinant
(by the definition of the ideals in Subsection \ref{subsec:DetmaxDefi}), 
so $e = c$.
Let the determinant $F$ have columns in positions $q_{c+1}, \ldots, q_d$
where $l_c+1 < q_{c+1}$. Consider now the determinant with columns
$q_{a+1}, \ldots, q_c, \ldots, q_d$ in $M(\ell)$. We expand it by the upper
$(d-c)$ rows. Then this determinant is $E \cdot F$ modulo $y_{l_c +1,c}$ and
so $E \cdot F \in \oI(\ell)$.
\end{proof}

\begin{proof}[Proof of Theorem \ref{thm:DetMain}.]
We are going to use induction on $\ell$.
The induction start will
be as in Example \ref{ex:DetIndstart}, using the sequence
\[ \ell \, : \, l_a = \cdots =  l_{b-1} \leq l_b. \]
Then $M(\ell)$ is a $(b-a) \times (l_b - l_a)$ matrix $M(\ell)$ of distinct
variables, and the statement is classic, \cite{Stu}.
We will successively increase the $l_i$ for $i < b$. We then successively
replace variables by zero in the matrix $M(\ell)$.

By assumption we have $\text{in}(I(\ell)) = L^Y(i)$ for some $\ell$ and
associated $i$. We want to show
that $\text{in}(I(\ell^\prime)) = L^Y(i^\prime)$. 
Denote by $\overline{L}^Y(i)$ the image of $L^Y(i)$ in 
$\kr[Y;\ell^\prime] = \kr[Y;\ell]/(y_{\ell_c+1,c})$. 
We note that
$L^Y(i^\prime) = \overline{L}^Y(i) + \hat{L}^Y(i_c^\prime)$.
By Lemma \ref{lem:DetIrel}b. 
$I(\ell^\prime) = \overline{I}(\ell) + \hat{I}(\ell_c^\prime)$.
What we need to show is then
\begin{equation*} 
\text{in}(\oI(\ell) + \hI(\ell^\prime_c)) = 
\overline{L}^Y(i) + \hat{L}^Y(i^\prime_c). 
\end{equation*} 
By induction the following inclusion is clear
\begin{equation} \label{eq:DetInideal}
 \text{in}(\oI(\ell) + \hI(\ell^\prime_c)) \supseteq
\overline{L}^Y(i) + \hat{L}^Y(i^\prime_c). 
\end{equation}

\medskip
Consider the exact sequence
\begin{equation} \label{eq:DetSeqI}
0 \vpil \frac{\kr[Y;\ell^\prime]}{\hI(\ell^\prime_c) + \hI(\ell^c)} \vpil
\frac{\kr[Y;\ell^\prime]}{\hI(\ell_c) + \hI(\ell^c)} 
\overset{\alpha}{\longleftarrow} \oI(\ell) + \hI(\ell^\prime_c).
\end{equation}
We show that the kernel of $\alpha$ is $\oI(\ell)$. That the latter is in
the kernel follows by Lemma \ref{lem:DetIrel}.a. Let then 
$f^\prime_c \in \hI(\ell^\prime_c)$ be in the kernel of $\alpha$, so
$f^\prime_c = f_c + f^c$ where $f_c \in \hI(\ell_c)$ (and so
is in $\oI(\ell)$ by Lemma \ref{lem:DetII}b.), and $f^c \in \hI(\ell^c)$.
Then $f^\prime_c - f_c = f^c$. But the left side is in the ideal 
in $\kr[Y;\ell^\prime]$ generated
by $I(\ell^\prime_c) \sus \kr[Y;\ell^\prime_c]$, and $f^c$ is in the ideal
generated by $I(\ell^c) \sus \kr[Y;\ell^c]$. These polynomial rings
have distinct variables. Then 
\[ \hI(\ell^\prime_c) \cap \hI(\ell^c) = \hI(\ell^\prime_c) \cdot \hI(\ell^c). \]
Thus 
\[ f^\prime_c - f_c \in \hI(\ell^\prime_c) \cdot \hI(\ell^c) \sus
\oI(\ell) \quad \text{ by Lemma \ref{lem:DetIrel} }.\]
Hence $f^\prime_c \in \oI(\ell)$.

\medskip
There is also an exact sequence
\begin{equation} \label{eq:DetSeqL}
0 \vpil \frac{\kr[Y;\ell^\prime]}{\hat{L}^Y(i^\prime_c) + \hat{L}^Y(i^c)} \vpil
\frac{\kr[Y;\ell^\prime]}{\hat{L}^Y(i_c) + \hat{L}^Y(i^c)} 
\vpil
\overline{L}^Y(i) + \hat{L}^Y(i^\prime_c) \vpil \overline{L}^Y(i)
\vpil 0.
\end{equation} 

We now compare the Hilbert functions of the terms in the sequences 
\eqref{eq:DetSeqI} and \eqref{eq:DetSeqL}.

\noindent 1. By induction $L^Y(i_c)$ is the initial ideal of $I(\ell_c)$ and so
these ideals have the same Hilbert functions. The same also holds
true for the pairs $i^\prime_c, \ell^\prime_c$, for $i^c, \ell^c$ and for $i,\ell$.

\noindent 2. Since $L^Y(i^\prime_c)$ and $L^Y(i^c)$ involve distinct 
sets of variables,
the Hilbert functions of $\hat{L}^Y(i^\prime_c) + \hat{L}^Y(i^c)$ and 
$\hI(\ell^\prime_c) + \hI(\ell^c)$ are the same, and similarly with
$i^\prime_c, \ell^\prime_c$ replaced by $i_c,\ell_c$. 

\noindent 3. By Lemma \ref{lem:DetVary} $(L^Y(i):y_{\ell_c +1,c}) = L^Y(i)$.
Since $\text{in}(I(\ell)) = L^Y(i)$ it is an easy fact that
$(I(\ell):y_{\ell_c+1,c}) = I(\ell)$. Hence the Hilbert functions
of $\overline{L}^Y(i)$ and $\overline{I}(\ell)$ in
$\kr[Y;\ell^\prime] = \kr[Y;\ell]/(y_{\ell_c +1,c})$ are the same.

\noindent 4. Comparing the sequences
\eqref{eq:DetSeqI} and \eqref{eq:DetSeqL} 
\[ \overline{L}^Y(i) + \hat{L}^Y(i^\prime_c), \quad I(\ell^\prime) = \oI(\ell) +
\hI(\ell^\prime_c) \]
have the same Hilbert function. Taking the inclusion 
\eqref{eq:DetInideal} into account, this inclusion must be an equality.
\end{proof}

\section{Strongly stable ideals}
\label{sec:ststable}

We now assume $P$ is the totally ordered poset 
\[ P = [m] = \{1 < 2 < \cdots < m\}. \]
Let $\cJ \sus \Hom([m],\NN)$ be a poset ideal.
We shall show that the associated letterplace ideal gives
by projection a strongly stable ideal in $\kr[x_{[m]}]$, and
the co-letterplace ideal gives by projection a strongly stable
ideal in $\kr[x_{\NN}] = \kr[x_0,x_1,x_2, \ldots]$ generated in degree $\leq m$. 
Each of these correspondences are one-to-one
and putting them together we get a duality
between strongly stable ideals in $\kr[x_{[m]}]$ and finitely generated
$m$-regular
strongly stable ideals in $\kr[x_\NN]$. 
The results in this section are joint with
Alessio D'Ali and Amin Nematbakhsh.

If $S = \sum_{p \in P} s_p p$ we shall often by abuse of notation 
write $S$ as a short for the
monomial $\prod_{p \in P} x_p^{s_p}$.

\subsection{Strongly stable ideals from letterplace ideals}
Recall that an ideal $I$ of $\kr[x_1, \ldots, x_n]$ is {\it strongly
stable} if $x_jm \in I$ and $i < j$ implies $x_im \in I$.

The letterplace ideal $L(\cJ,P)$ is an ideal in $\kr[x_{[m] \times \NN}]$.
The projection onto the first factor $[m] \times \NN \mto{p_1} [m]$
has right strict chain fibers, and so we get the projected ideal $L^{p_1}(\cJ,P)
\sus \kr[x_{[m]}]$ such that $\kr[x_{[m]}]/L^{p_1}(\cJ,P)$ is a regular quotient
of $\kr[x_{S}]/L(\cJ,P)$ for suitable finite $S$. 

\begin{theorem}
The projected letterplace ideal $L^{p_1}(\cJ,[m])$ 
is a strongly stable ideal in $\kr[x_{[m]}]$. 
This correspondence is a one-one correspondence between poset ideals
in $\Hom([m],\NN)$ and strongly stable ideals  $\cJ$ in $\kr[x_{[m]}]$. 
\end{theorem}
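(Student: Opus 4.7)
My plan is to deduce the theorem from Proposition \ref{pro:MXPPstabil} by showing that for the chain $P = [m]$, the notion of $[m]$-stable coincides with strongly stable. The first step is to observe that by Corollary \ref{cor:regLbar} the monomials in $L^{p_1}(\cJ,[m])$ are precisely $\oLa(\cJ^c)$. Hence by Proposition \ref{pro:MXPPstabil}, this ideal is automatically $[m]$-stable; all that remains on the ``strong stability'' side is to translate the $[m]$-stable condition.

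For the translation, every antichain in a chain is a singleton $\{b\}$, so the defining hypothesis of $P$-stability reduces to: if $n x_b \in I$ and some longest $b$-chain in $n x_b$ passes through $a$, then $n x_a \in I$. The key observation is that when $P$ is totally ordered, every element $a \leq b$ can be inserted into every $b$-chain $C : p_1 \leq \cdots \leq p_r \leq b$ to yield a strictly larger multichain; simply place $a$ immediately after the largest $p_i$ with $p_i \leq a$, or at the front if $a \leq p_1$. Thus for the chain $[m]$, every longest $b$-chain in a monomial automatically goes through every $a \leq b$, and $[m]$-stability collapses to the condition ``$n x_b \in I$ and $a \leq b$ imply $n x_a \in I$'', i.e., strongly stable. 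The reverse implication is trivial, since any hypothesis of $[m]$-stability requires $a \leq b$ for a longest $b$-chain to exist through $a$, and then strong stability applies.

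For the bijection, I would combine the set bijection $\oLa : \Hom([m], \NN) \to \NN[m]$ of Proposition \ref{pro:regLbar} with the proposition that $\oLa$ sends poset ideals to poset ideals (so poset filters to poset filters, equivalently to monomial ideals). Proposition \ref{pro:MXPPstabil} identifies the resulting image precisely as the $[m]$-stable monomial ideals, which by the previous paragraph coincides with the strongly stable ideals. Since the map $\cJ \mapsto L^{p_1}(\cJ,[m])$ is $\cJ \mapsto \oLa(\cJ^c)$, this yields the claimed bijection between poset ideals in $\Hom([m],\NN)$ and strongly stable ideals in $\kr[x_{[m]}]$.

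The main ``obstacle'' is minor: checking the insertion argument in the middle paragraph, which is immediate from total orderedness. Everything else is bookkeeping using the results already established.
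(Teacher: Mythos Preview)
Your proof is correct and takes a genuinely different route from the paper. The paper argues directly: it writes $\oLa\phi = \prod_{a=1}^m x_a^{\phi(a)-\phi(a-1)}$ explicitly, and for each direction of the bijection it constructs by hand the relevant neighbouring isotone map $\phi'$ (increasing $\phi(a-1)$ by one to show strong stability of $L^{p_1}(\cJ,[m])$, increasing $\phi(a)$ by one at a maximal discrepancy to show that a strongly stable ideal comes from a filter). It never invokes Proposition~\ref{pro:MXPPstabil}.

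You instead deduce everything from Proposition~\ref{pro:MXPPstabil} by proving the lemma ``$[m]$-stable $=$ strongly stable'', which the paper states in a remark but proves only implicitly via the direct argument in this section. Your insertion observation (in a total order any $a\leq b$ fits into any $b$-chain, hence into any longest $b$-chain) is the correct and immediate way to see this equivalence. This approach is cleaner and makes better use of the machinery already built; the paper's direct approach has the virtue of being self-contained and of exhibiting the explicit formula for $\oLa\phi$ on the chain, which is also used in the companion theorem on co-letterplace ideals.
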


\begin{proof}
The monomials in $L^{p_1}(\cJ,[m])$ are the images $\oLa\phi$ where
$\phi \in \cJ^c$. For a map $\phi: [m] \pil \NN$ then 
$\Lambda \phi = \{ (a,i) \, | \, \phi(a) > i \geq \phi(a-1) \}$. This gives
$\oLa \phi = \prod_{a = 1}^m x_a^{\phi(a) - \phi(a-1)}$. Suppose 
that $a \geq 1$ and $\phi(a) > \phi(a-1)$ so $x_a$ is a factor of 
$\overline{\Lambda} \phi$. Define 
\[ \phi^\prime(i) = \begin{cases} \phi(i), & i \neq a-1 \\
\phi(a-1) + 1, & i = a-1 \end{cases}. \]
Then $\oLa{\phi^\prime} = \frac{x_{a-1}}{x_a} \oLa \phi$ and
$\phi^\prime \geq \phi$ and so $\phi^\prime \in \cJ^c$. This implies that 
$\oLa\cJ^c = L^{p_1}(\cJ,P)$ is strongly stable.

Since 
\[ \Hom([m], \NN) \mto{\overline{\Lambda}} \text{ monomials in }
\kr[x_1, \ldots, x_m] \]
is a bijection, this gives an injective map from poset ideals $\cJ$ to strongly
stable ideals in $\kr[x_1, \ldots, x_m]$. 

Consider now a strongly stable ideal $I \sus \kr[x_1, \ldots, x_m]$.
The monomials in $I$ correspond by Proposition \ref{pro:regLbar} 
via the inverse of $\oLa$ to a subset
$\cF \sus \Hom([m], \NN)$. We show that $\cF$ is a poset filter. 
Let $\phi \in \cF$ and $\psi > \phi$. 
For some $a$ we must then have $\phi(a) < \psi(a)$. Let $a$ be maximal such.
Define 
\[ \phi^\prime(i) = \begin{cases} \phi(i), & i \neq a \\
\phi(a) + 1, & i = a.
\end{cases} \]
Then $\psi \geq \phi^\prime > \phi$. But $\oLa \phi
= \prod_{a = 1}^m x_a^{\phi(a) - \phi(a-1)}$ and $\oLa \phi^\prime = 
\frac{x_a}{x_{a+1}} \cdot \oLa \phi$ if $a < m$ and 
$\oLa \phi^\prime = x_a \oLa \phi$ if $a = m$. 
Then $\oLa \phi^\prime \in I$ and so $\phi^\prime \in \cF$. 
Continuing we eventually get $\psi \in \cF$. Thus $\cF$ is poset filter
and so $\cF = \cJ^c$ for a poset ideal $\cJ$.
\end{proof}

\subsection{Strongly stable ideals from co-letterplace ideals}

Given the poset ideal $\cJ \sus \Hom([m],\NN)$ we get the co-letterplace
ideal $L([m],\cJ)$. The projection $[m] \times \NN \mto{p_2} \NN$ has
left strict chain fibers. We then get the ideal $L^{p_2}([m], \cJ)$ in 
$\kr[x_{\NN}]$ such that $\kr[x_\NN]/L^{p_2}([m], \cJ)$ is a regular
quotient of $\kr[x_{[m] \times \NN}]/L([m],\cJ)$. 

\begin{theorem} The projected co-letterplace ideal 
$L^{p_2}([m],\cJ)$ is a strongly stable $m$-regular ideal in $\kr[x_\NN]$.
This gives a one-one correspondence between poset ideals 
$\cJ \sus \Hom([m], \NN)$ and finitely generated strongly stable 
$m$-regular ideals in $\kr[x_\NN]$. 
\end{theorem}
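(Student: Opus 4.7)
The plan is to verify strong stability and $m$-regularity of $L^{p_2}([m], \cJ)$, then exhibit an explicit inverse $I \mapsto \cJ_I$ and check both round-trips are the identity.

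For strong stability, the key structural fact is: if $\alpha : [k] \pil \NN$ is a marker of $\cJ$ and $\alpha' : [k] \pil \NN$ is isotone with $\alpha' \leq \alpha$ pointwise, then $\alpha'$ is also a marker. Indeed, an arbitrary extension $\phi$ of $\alpha'$ is dominated by the extension $\phi^*$ of $\alpha$ given by $\phi^*(l) = \alpha(l)$ for $l \leq k$ and $\phi^*(l) = \max(\phi(l), \alpha(k))$ for $l > k$; since $\phi \leq \phi^*$ and $\phi^* \in \cJ$, also $\phi \in \cJ$. The generators of $L^{p_2}([m], \cJ)$ are the monomials $\prod_{l=1}^k x_{\alpha(l)}$ for markers $\alpha$. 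Given such a generator and $i < j$ with $x_j$ occurring in it, remove one copy of $j$ from the sorted value-sequence of $\alpha$ and insert $i$ at the unique position preserving sorted order; the resulting $\alpha'$ satisfies $\alpha' \leq \alpha$ componentwise (by the sorted-order verification), hence is a marker by the key fact, and its monomial equals $(x_i/x_j) \prod x_{\alpha(l)}$. The strong stability of arbitrary monomials reduces to this by writing $x_j u = m \cdot v$ with $m$ a minimal generator and considering whether $x_j$ divides $m$ or $v$. Since markers have domain $[k]$ with $k \leq m$, minimal generators have degree $\leq m$; by Eliahou-Kervaire the regularity of a strongly stable ideal equals the maximum degree of a minimal generator, so $L^{p_2}([m], \cJ)$ is $m$-regular.

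For the inverse, given a finitely generated $m$-regular strongly stable ideal $I \sus \kr[x_\NN]$, define
\[ \cJ_I = \{ \phi : [m] \pil \NN \text{ isotone} \, | \, \prod_{i=1}^m x_{\phi(i)} \in I \}. \]
Strong stability implies $\cJ_I$ is a poset ideal: if $\psi \leq \phi \in \cJ_I$, successively replace each $x_{\phi(i)}$ by $x_{\psi(i)}$ to remain in $I$. To verify $\cJ_{L^{p_2}([m], \cJ)} = \cJ$: if $\phi \in \cJ$ then $\phi$ itself is trivially a marker on $[m]$, so $\prod x_{\phi(i)} \in L^{p_2}([m], \cJ)$. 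Conversely, if $\prod x_{\phi(i)}$ is divisible by a generator $\prod_{j=1}^{k'} x_{\beta(j)}$ coming from a marker $\beta$, then since $\{\beta(j)\}$ is a sub-multiset of the sorted sequence $\{\phi(1), \ldots, \phi(m)\}$, comparing $j$-th smallest entries forces $\beta(j) \geq \phi(j)$ for $j \leq k'$; thus $\phi_{|[k']} \leq \beta$ componentwise, is isotone, and by the key fact is itself a marker of $\cJ$, so $\phi$ (extending it) lies in $\cJ$.

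For $L^{p_2}([m], \cJ_I) = I$: any minimal generator $M = \prod_{j=1}^{k'} x_{b_j}$ determines $\beta(j) = b_j$, which is a marker of $\cJ_I$ since every extension $\phi$ satisfies $\prod x_{\phi(i)} = M \cdot \prod_{i > k'} x_{\phi(i)} \in I$; hence $M \in L^{p_2}([m], \cJ_I)$. Conversely, for a marker $\alpha : [k] \pil \NN$ of $\cJ_I$, choose $N$ larger than every variable index appearing in the (finite) minimal generating set of $I$, and take the extension $\phi^N$ of $\alpha$ with $\phi^N(l) = N$ for $l > k$; then $\phi^N \in \cJ_I$, so $\prod x_{\phi^N(i)} = \prod_{i=1}^k x_{\alpha(i)} \cdot x_N^{m-k} \in I$, and any generator dividing this monomial cannot involve $x_N$, hence divides $\prod_{i=1}^k x_{\alpha(i)}$, giving $\prod x_{\alpha(i)} \in I$. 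The most delicate points are the sorted-sequence bookkeeping that makes the strong stability reordering work, and this final stripping argument, which crucially exploits finite generation of $I$ to discard arbitrarily large padding from extensions.
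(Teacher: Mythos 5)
Your proof is correct and follows essentially the same route as the paper: strong stability via the exchange property on the monomials $\overline{\Gamma\alpha}$ of markers, $m$-regularity from the degree bound $k\leq m$ on marker domains, and the inverse $I\mapsto\cJ_I$ built from the isotone maps $\phi:[m]\pil\NN$ with $\prod x_{\phi(i)}\in I$. You are somewhat more explicit than the paper in isolating the key fact that markers on $[k]$ are closed under pointwise $\leq$ and in verifying both round-trips; the only tiny point to tidy is that when you choose $N$ for the padding extension $\phi^N$ you should take it $\geq\alpha(k)$ as well (so that $\phi^N$ is isotone), which is of course harmless.
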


\begin{proof}
Let $I = [1,i] \sus [m]$ be a poset ideal and $\phi : [1,i] \pil \NN$
be a marker for $\cJ$. 
Then $\Gamma \phi = \{ (a,j) \, | \, \phi(a) = j, 1 \leq a \leq i \}$ and 
$\overline{\Gamma \phi}$ in $\kr[x_\NN]$ is $\prod_{a=1}^i x_{\phi(a)}$. 
Take a variable $x_j$ with $j\geq 1$ and dividing $\overline{\Gamma \phi}$.
Let $a$ be minimal with $\phi(a) = j$. Define
\[ \phi^\prime(i) = \begin{cases} \phi(i), & i \neq a \\
\phi(a) - 1 = j-1, & i = a. \end{cases} \]
Then $\phi^\prime < \phi$ and so $\phi^\prime \in \cJ$ and
$\overline{\Gamma \phi^\prime} = \frac{x_{j-1}}{x_j} 
\cdot \overline{\Gamma \phi} \in L^{p_2}([m],\cJ)$. 
Then $L^{p_2}([m], \cJ)$ is strongly stable.

\medskip
Conversely, given a finitely generated $m$-regular strongly stable ideal
$I \sus \kr[x_\NN]$. Each generator $x_{i_1} x_{i_2} \cdots x_{i_r}$
where $i_1 \leq i_2 \leq \cdots \leq i_r$ will have $r \leq m$ and
gives an isotone map $\alpha : [1,r] \pil \NN$. Let $\cJ$ be the set of isotone
map $\phi: [m] \pil \NN$ extending such $\alpha$'s. We claim that
$\cJ$ is a poset ideal. 

For this it is sufficient to show that if $\beta \leq \alpha$ then 
$\overline{\Gamma \beta}$ is in $I$. But $\overline{\Gamma \beta} 
= x_{j_1} x_{j_2} \cdots x_{j_r}$ where each
$j_k = \beta(k) \leq \alpha(k) = i_k$. Then by strong stability of $I$
we see that $\overline{\Gamma \beta}$ is in $I$ and so $\beta$ is in $\cJ$.
\end{proof}

\begin{corollary}
There are one-to-one correspondences:
\begin{align*} & \text{Strongly stable ideals in } \kr[x_{[m]}] \\
\overset{1-1} {\longleftrightarrow}  & 
\text{ Poset ideals } \cJ \sus \Hom([m],\NN) \\
\overset{1-1}{\longleftrightarrow}  & \text{ Finitely generated 
strongly stable $m$-regular ideals in } \kr[x_\NN].
\end{align*}
\end{corollary}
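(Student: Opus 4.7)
The plan is to observe that this corollary is an immediate consequence of the two preceding theorems. The first theorem establishes a bijection between poset ideals $\cJ \sus \Hom([m],\NN)$ and strongly stable ideals in $\kr[x_{[m]}]$ by sending $\cJ$ to $L^{p_1}(\cJ,[m]) = \oLa\cJ^c$. The second theorem establishes a bijection between poset ideals $\cJ \sus \Hom([m],\NN)$ and finitely generated strongly stable $m$-regular ideals in $\kr[x_\NN]$ by sending $\cJ$ to $L^{p_2}([m],\cJ)$. Composing these two bijections (with one inverted) yields the sought correspondence between strongly stable ideals in $\kr[x_{[m]}]$ and finitely generated strongly stable $m$-regular ideals in $\kr[x_\NN]$.

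So the only thing that requires justification is that the bijections are genuine bijections between the claimed classes. For the first correspondence, the theorem explicitly shows that $\oLa$ induces a bijection between $\Hom([m],\NN)$ and monomials in $\kr[x_{[m]}]$ (using Proposition \ref{pro:regLbar}), that poset ideals go to strongly stable ideals, and that strongly stable ideals pull back to poset ideals. For the second correspondence, the theorem shows that a poset ideal $\cJ$ yields a strongly stable ideal, while any finitely generated strongly stable $m$-regular ideal determines a poset ideal of isotone maps $[m] \pil \NN$ that extend the markers read off from its generators. Both directions are already verified in the two theorems above.

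No new obstacle arises: the work has been carried out in establishing the two bijections individually. The corollary simply collects them into the threefold statement. One minor point worth mentioning in the write-up is that the resulting direct duality between $\kr[x_{[m]}]$ and $\kr[x_\NN]$ is mediated entirely by the poset ideal $\cJ$, so it sends a strongly stable ideal $I \sus \kr[x_{[m]}]$ to the strongly stable ideal generated by the monomials $\overline{\Gamma\phi}$ as $\phi$ ranges over markers of the corresponding poset ideal $\cJ$. Thus the proof is just a one-line composition of the two theorems.
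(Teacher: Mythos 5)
Your proof is correct and matches the paper's intent exactly: the corollary is stated without a separate proof precisely because it is the composition of the two bijections established in the preceding theorems, which is what you observe.
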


\begin{example}
Let $m = 1$. The strongly stable ideal $(x_1^n) \sus \kr[x_1]$ corresponds
to the $1$-regular ideal $(x_0,x_1,x_2, \ldots, x_{n-1}) \sus \kr[x_\NN]$. 

Let $m = 2$. The strongly stable ideal 
\[ (x_1^a, x_1^{a-1}x_2^{b_1 + 1}, \ldots, x_1^{a-r}x_2^{b_r + r}, \ldots,
x_2^{b_a + a})\]
with $0 \leq b_1 \leq b_2 \leq \cdots \leq b_a$ corresponds to 
the smallest $2$-regular strongly stable ideal containing
\[ x_0 x_{b_a + a-1}, x_1x_{b_{a-1} + a-1}, \cdots, x_{a-1}x_{b_1 + a-1}. \]
\end{example}

In the following let $[n]_0 = \{0 < 1 < \cdots < n\}$ be the chain with 
$n+1$ elements. By the adjunction between $P \times -$ and
$\Hom(P,-)$ in the category of posets, we have 
\begin{align*} 
\Hom([m],[n]_0) = \Hom([m],\Hom([n],[1]_0)) = & \Hom([m] \times [n], [1]_0)) \\
= & \Hom([n], \Hom([m],[1]_0) \\
= & \Hom([n],[m]_0).
\end{align*}
This is the correspondence between a partition of $m$ parts into sizes $\leq n$,
and its dual partition of $n$ parts into sizes $\leq m$. We then get
the following.

\begin{corollary}
There are one-to-one correspondences:
\begin{align*} & \text{ Strongly stable $n$-regular ideals in } \kr[x_{[m]}] \\ 
\overset{1-1}{\longleftrightarrow} & \text{ Poset ideals } 
\cJ \sus \Hom([m],[n]_0) \\
\overset{1-1}{\longleftrightarrow} & \text{
Strongly stable $m$-regular ideals in } \kr[x_{[n]}].
\end{align*}
\end{corollary}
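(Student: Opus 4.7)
I would prove this corollary by restricting the two bijections of the preceding corollary to the finite sub-poset $\Hom([m],[n]_0) \sus \Hom([m],\NN)$, and then invoking the classical adjunction between posets.

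For the first of the two claimed bijections, I would begin by noting that, under the identification $\oLa$ of Proposition~\ref{pro:regLbar}, the total degree of $\oLa\phi$ telescopes to $\phi(m)$ (with the convention $\phi(0)=0$). By Corollary~\ref{cor:regLbar} the nonzero monomials in the quotient $\kr[x_{[m]}]/L^{p_1}(\cJ,[m])$ are precisely the $\oLa\phi$ with $\phi \in \cJ$. Consequently $\cJ \sus \Hom([m],[n]_0)$ if and only if every nonzero monomial of that quotient has total degree at most $n$. Since $\Hom([m],[n]_0)$ is finite, such a $\cJ$ is automatically finite and the associated ideal $L^{p_1}(\cJ,[m])$ is artinian; the degree bound on the quotient then expresses exactly the $n$-regularity condition. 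This identifies the first bijection as the restriction of the bijection from the preceding corollary.

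For the second bijection I would invoke the poset adjunction $\Hom(P,\Hom(Q,R)) = \Hom(P \times Q,R)$ together with the identification $\Hom([k],[1]_0) \cong [k]_0$ (an isotone map from a chain to $\{0,1\}$ is determined by its order ideal of zeros). Combining these yields
\[
\Hom([m],[n]_0) \;=\; \Hom([m],\Hom([n],[1]_0)) \;=\; \Hom([m]\times[n],[1]_0) \;=\; \Hom([n],[m]_0),
\]
the classical partition-conjugation bijection in the $m \times n$ box. Since this is an isomorphism of posets, it carries poset ideals bijectively to poset ideals. Applying the first step with the roles of $m$ and $n$ interchanged would then match poset ideals of $\Hom([n],[m]_0)$ with $m$-regular strongly stable ideals in $\kr[x_{[n]}]$, completing the second bijection.

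The main delicate point is the first step, where one must verify in both directions that $\cJ \sus \Hom([m],[n]_0)$ is equivalent to $n$-regularity of the strongly stable ideal $L^{p_1}(\cJ,[m])$. Once that is in place, the remainder of the argument is a formal consequence of the poset adjunction together with the symmetry of the construction in $m$ and $n$.
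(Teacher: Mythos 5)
Your overall strategy — restrict the preceding corollary's two bijections to the finite sub-poset $\Hom([m],[n]_0)$, invoke the adjunction $\Hom([m],[n]_0) \cong \Hom([n],[m]_0)$, and then apply the first step with $m$ and $n$ interchanged — is exactly the paper's intended derivation, and the telescoping $\deg\oLa\phi = \phi(m)$ together with the characterization ``$\cJ \sus \Hom([m],[n]_0)$ iff the nonzero monomials of $\kr[x_{[m]}]/L^{p_1}(\cJ,[m])$ all have degree $\leq n$'' is correct.

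The step you yourself flag as ``delicate'' is, however, where a genuine gap sits: the last equivalence, asserting that this degree bound ``expresses exactly the $n$-regularity condition,'' does not hold under the conventions the paper itself uses. For an artinian strongly stable ideal $I$ in $\kr[x_{[m]}]$, one has $\text{reg}(I) = \text{reg}(\kr[x_{[m]}]/I) + 1$, and for a module of finite length the regularity is the top nonzero degree; also, for strongly stable $I$ (Eliahou–Kervaire) the regularity equals the top degree of a minimal generator. So $\phi(m) \leq n$ for all $\phi\in\cJ$ gives $\text{reg}(\kr[x_{[m]}]/I)\leq n$, hence $\text{reg}(I)\leq n+1$, not $\leq n$. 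The discrepancy is symmetric: on the co-letterplace side the markers of a poset ideal $\cJ\sus\Hom([m],[n]_0)$ have graphs in $[m]\times[n]_0$, so the projected ideal $L^{p_2}([m],\cJ)$ naturally sits in the $n+1$ variables $x_0,\dots,x_n$ (i.e.\ in $\kr[x_{[n]_0}]$), not in the $n$ variables of $\kr[x_{[n]}]$. So either the paper's statement carries an implicit shift ($[n]_0 \leftrightarrow [n-1]_0$ or $\kr[x_{[n]}]\leftrightarrow\kr[x_{[n]_0}]$, with a corresponding adjustment of the regularity bound), or the correspondence must be formulated with $\Hom([m],[n-1]_0)$ in the middle. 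As written, your proof simply asserts the $n$-regularity equivalence without noticing this index mismatch, and that is the one place where the argument does not close. Once the shift is tracked explicitly, the rest of your argument — the poset adjunction preserving poset ideals, and the symmetry in $m$ and $n$ — is a correct and complete rendering of what the paper leaves implicit.
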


\section{Proof that the poset maps induce regular 
sequences}
\label{sec:proof}
The following is a refinement of Lemma 7.1 of \cite{FGH}, but
the proof is exactly the same, and we omit it.

\begin{lemma} \label{lem:regx01}
Let $I \sus k[x_0,\ldots,x_n]$ be a monomial ideal such that every minimal 
generator of $I$ is squarefree in the variable $x_0$, so no minimal generator
is divisible by $x_0^2$. If $f \in S$ is such that $x_0f = x_1f$ in 
$\kr[x_0, \ldots, x_n]/I$, then 
for every monomial $m$ in $f$ we have $x_1m = 0 = x_0 m$ in this
quotient ring.
\end{lemma}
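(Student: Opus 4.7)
The plan is to rewrite the hypothesis as $(x_0-x_1)f \in I$, decompose $f$ according to its $\kr[x_2,\ldots,x_n]$-content to reduce to the case $f = g(x_0,x_1)\cdot m'$, and then read off, monomial by monomial, what membership in the monomial ideal $I$ forces on the coefficients of $g$. The squarefree-in-$x_0$ hypothesis will collapse many conditions on monomials divisible by $x_0^A$ (with $A\geq 1$) to conditions on $x_0$ alone.

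Grouping the monomials of $f$ by their $x_2,\ldots,x_n$-parts, write $f = \sum_{m'} g_{m'}(x_0,x_1)\cdot m'$, where $m'$ ranges over monomials in $\kr[x_2,\ldots,x_n]$. Since distinct $m'$ produce disjoint monomial supports and $I$ is a monomial ideal, the containment $(x_0-x_1)f \in I$ splits into $(x_0-x_1)g_{m'}(x_0,x_1)\cdot m' \in I$ for each $m'$ separately. Hence it suffices to treat a single summand, and I may now assume $f = g(x_0,x_1)\cdot m'$ for a fixed $m'$.

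The key technical observation is that, because no minimal generator of $I$ is divisible by $x_0^2$, for any monomial $n \in \kr[x_1,\ldots,x_n]$ and any $A \geq 1$ one has $x_0^A n \in I \Leftrightarrow x_0 n \in I$: indeed, a minimal generator of $I$ dividing the left side has $x_0$-degree $0$ or $1$, so after dropping surplus $x_0$-factors it still divides $x_0 n$. Write $E_B$ for the statement ``$x_0 x_1^B m' \in I$'', and note that $E_B \Rightarrow E_{B+1}$. Expanding $g = \sum c_{a,b} x_0^a x_1^b$, the coefficient of $x_0^A x_1^B$ in $(x_0-x_1)g$ is $c_{A-1,B} - c_{A,B-1}$ (with out-of-range indices interpreted as zero). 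Fix a diagonal $a+b = d$ meeting the support of $g$ and let $b^*$ be the smallest $b\in[0,d]$ with $c_{d-b,b} \neq 0$. Then the coefficient of $x_0^{d+1-b^*} x_1^{b^*} m'$ in $(x_0-x_1)g\cdot m'$ equals $c_{d-b^*,b^*} \neq 0$, so this monomial lies in $I$. If $b^* = 0$ this reads $x_0^{d+1} m' \in I$, forcing $E_0$ and therefore every $E_B$; if $b^* \geq 1$, then $d+1-b^* \geq 1$ and the reduction yields $E_{b^*}$, hence $E_B$ for all $B \geq b^*$. Either way, $E_b$ holds for every $b$ with $c_{d-b,b} \neq 0$.

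For a monomial $m = x_0^a x_1^b m'$ of $f$, the statement $E_b$ combined with $a+1\geq 1$ gives $x_0 m = x_0^{a+1} x_1^b m' \in I$. For $x_1 m = x_0^a x_1^{b+1} m'$: when $a \geq 1$, the reduction rewrites the question as $E_{b+1}$, which follows from $E_b$. The main obstacle is the case $a = 0$, where $x_1 m = x_1^{b+1} m'$ does not involve $x_0$ at all and the squarefree reduction is unavailable; fortunately, the coefficient of $x_1^{b+1}$ in $(x_0-x_1)g$ equals $-c_{0,b} \neq 0$, so the monomial $x_1^{b+1} m'$ lies in $I$ directly. This closes the argument.
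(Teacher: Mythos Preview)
Your proof is correct. The decomposition by $\kr[x_2,\ldots,x_n]$-content, the key reduction $x_0^A n \in I \Leftrightarrow x_0 n \in I$ for $A\geq 1$, the diagonal analysis picking out the minimal $b^*$, and the separate treatment of the $a=0$ case via the coefficient $-c_{0,b}$ all go through cleanly.

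As for comparison: the paper does not actually prove this lemma. It states that the result is a refinement of Lemma~7.1 of \cite{FGH} with the same proof, and omits the argument entirely. So there is no in-paper proof to compare against. Your writeup therefore supplies what the paper leaves out, and does so by a direct elementary monomial-coefficient argument that requires nothing beyond the squarefree-in-$x_0$ hypothesis.
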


\begin{proof}[Proof of Theorem \ref{thm:regLP}.]
Recall by Lemma \ref{lem:regFilters} 
that an isotone map $\phi: P \pil \NN$ corresponds to a chain of 
poset filters in $P$:
\[ P = F_0 \supseteq F_1 \supseteq \cdots \supseteq 
F_n \supseteq F_{n+1} =
\emptyset, \]
where $\phi(p) = i$ if $p \in F_i \backslash F_{i+1}$. 
Then $\Lambda \phi$ is the set of all pairs $(a,i-1)$ where $a$ is a minimal
element in $F_i$ for $i \geq 1$.

\medskip
Let $\phi^{-1}(r)$ have cardinality $\geq 2$. Let $(p,i)$ be the element
in the fiber with minimal $i$ and so $\phi^{-1}(r) = R_1 \cup R_2
= \{(p,i)\} \cup R_2$ is a disjoint union. 
By Lemma 8.1 in \cite{FGH} we have a factorization into isotone maps of posets 
\begin{equation} \label{eq:ProofREF}
\xymatrix{ S \ar[rr] \ar[dr]^{\phi^\prime} & & R \\
  & R^\prime \ar[ur]^{\eta} & }
\end{equation}
where the cardinality of $R^\prime$ is one more than that of $R$,
$\eta^{-1}(r) = \{r_1, r_2 \}$, and
${\phi^{\prime}}^{-1}(r_1) =\{(p,i)\}$ and ${\phi^\prime}^{-1}(r_2) = R_2$.

For $(p,i) \in S$ denote by $\ov{p,i}$ its image in $R^\prime$ and
by $\ovv{p,i}$ its image in $R$. Note that all minimal generators 
of $L^{\phi^\prime}(\cJ,P)$ are squarefree with respect to 
$x_{r_1} = x_{\ov{p,i}}$. We need to show that $x_{r_1} - x_{r_2}$ is a 
non-zero divisor, so $x_{r_1} f = x_{r_2} f$ implies $f = 0$. By 
Lemma \ref{lem:regx01} it is enough to show that for any monomial $m$,
then $x_{r_1}m = 0 = x_{r_2}m$ in $\kr[x_{R^\prime}]/L^{\phi^\prime}(\cJ,P)$
implies $m = 0$ in this quotient ring.

\noindent {\bf Note.} In \cite[Theorem 2.1]{FGH} 
the proof given in Section 8 there had a minor gap, in that Lemma 7.1 in
\cite{FGH} was not quite sufficient to conclude as above that $m = 0$. However
Lemma \ref{lem:regx01} above rectifies this.

So assume $m$ is nonzero. We will derive a contradiction. Let $(p,i)$
map to $r_1$ and $(q,j)$ map to $r_2$.  For $\phi$ inclusion minimal
in the complement
$\cJ^c$, the ascent $\Lambda \phi$ lives
in $\NN S$, and denote by $\overline{\Lambda \phi}$ its image in the monoid
$\NN R^\prime$. There are then $\phi, \psi \in \cJ^c$ such that 
$m_{\overline{\Lambda \phi}}$ divides $x_{\ov{p,i}}m$ and 
$m_{\overline{\Lambda \psi}}$ divides $x_{\ov{q,j}}m$. 
Let $\phi$ and $\psi$ correspond to respectively
\[ P = F_0 \supseteq F_1 \supseteq \cdots, \quad 
P = G_0 \supseteq G_1 \supseteq \cdots. \]
Then there is $(a_s, s-1)$ with $a_s \in \min F_s$ such that 
$\overline{(a_s,s-1)} = \overline{(p,i)}$, and $(b_t, t-1)$ with 
$b_t \in \min G_t$ and $\overline{(b_t,t-1)} = \overline{(q,j)}$. 

Since $\ovv{a_s,s-1} = \ovv{b_t,t-1}$ and $\phi$ has right strict chain 
fibers, we have, say $s < t$ and $a_s \geq b_t$. 

Let $A_s$ be the minimal elements of $F_s$ and let $F_s^\prime$ be the filter
generated by $A_s \backslash \{a_s \}$. 
We claim that $F_s^\prime \cup G_s =  F_s \cup G_s$.
Let $p \in F_s$. If $p \geq $ some element in $A_s \backslash \{ a_s \}$
then clearly $p \in F_s^\prime$. If $p \geq a_s$ then $p \geq b_t$. But
since $t > s$ then $p \in G_s$. 
Now consider the following sequence of poset filters:
\begin{equation} \label{lig:RegChainIJ}
P = F_0 \cup G_0 \supseteq F_1 \cup G_1 \supseteq \cdots
\supseteq F_{s-1} \cup G_{s-1} \supseteq F_s^\prime \cup G_s 
\supseteq F_{s+1} \supseteq \cdots .
\end{equation}
This chain corresponds to an isotone map $\phi^\prime$ where
$\phi^\prime \geq \phi$ (since for each $p$ the $p$'th term above 
contains $F_p$), and so $\phi^\prime \in \cJ^c$. 

Now for two poset ideals $F, G \sus P$, we have $\min (F \cup G) \sus
(\min F) \cup (\min G)$. Thus for $i \in \NN$ 
the product $\prod_{p \in \min(F \cup G)} x_{(p,i-1)}$ divides
the least common multiple of $\prod_{p \in \min F} x_{(p,i-1)}$
and $\prod_{p \in \min G} x_{(p,i-1)}$.
But all the variables occurring in each of these monomials
will by $S \mto{\phi^\prime} 
R^\prime$ map to distinct variables, since $\phi$ has right strict
chain fibers. Thus we will also have that 
the product $\prod_{p \in \min(F \cup G)} x_{\ov{p,i-1}}$ divides
the least common multiple of $\prod_{p \in \min F} x_{\ov{p,i-1}}$
and $\prod_{p \in \min G} x_{\ov{p,i-1}}$.
But then $m_{\overline{\Lambda \phi^\prime}}$ constructed from the
chain \eqref{lig:RegChainIJ}, see Lemma \ref{lem:regFilters}, 
divides the least common multiple of 
$m_{\overline{\Lambda \phi}}/x_{\ov{a_s,s-1}}$ and 
$m_{\overline{\Lambda \psi}}/x_{\ov{b_t,t-1}}$ which both divide $m$. 
Hence $m_{\overline{\Lambda \phi^\prime}}$ divides $m$, contradicting
that $m$ is nonzero in $\kr[x_{R^\prime}]/L^{\phi^\prime}(\cJ,P)$.
\end{proof}

\begin{proof}[Proof of Theorem \ref{thm:regCOLP}.]
By induction on the cardinality of $\im \phi$. We may assume we have a
factorization 
\begin{equation}
\xymatrix{ S \ar[rr] \ar[dr]^{\phi^\prime} & & R \\
  & R^\prime \ar[ur]^{\eta} & }
\end{equation}
analogous to 
\eqref{eq:ProofREF}, with $\phi^{-1}(r)$ of cardinality $\geq 2$ and
with $\eta^{-1}(r) = \{r_1,r_2 \}$ and $\phi^{\prime -1}(r_1) 
= R_1 = \{(p_0,a) \}$ and $\phi^{\prime -1}(r_2) = R_2$. Furthermore
we have by induction established that
\begin{equation} \label{lig:regRprim}
\kr[x_{R^\prime}]/ L^{\phi^\prime}(P,\cJ)
\end{equation}
is obtained by cutting down
from $\kr[x_{S}]/L(P,\cJ)$ by a regular sequence
of variable differences. 

Let $(p_0,a)$ in $S$ map to $r_1 \in R^\prime$ and $(q_0,b)$ map to 
$r_2 \in R^\prime$. We will show that $x_{r_1} - x_{r_2}$ is a regular element in
the quotient ring 
(\ref{lig:regRprim}). 
So let $f$ be a polynomial of this quotient ring such that
$f(x_{r_1} - x_{r_2}) = 0$. 
Then by Lemma \ref{lem:regx01}, for any monomial $m$ in $f$
we have $m x_{r_1} = 0 = m x_{r_2}$ in the quotient ring 
$\kr[x_{R^\prime}]/ L^{\phi^\prime}(P,\cJ)$.
We assume $m$ is nonzero in this quotient ring
and shall derive a contradiction.

There is a minimal marker $i : I \pil \NN$  for $\cJ \sus \Hom(P, \NN)$ 
such that the
monomial $m^i = \prod_{p \in I} x_{\ov{p,i_p}}$ in 
$L^{\phi^\prime}(P,\cJ)$ divides $m x_{\ov{p_0,a}}$,
and similarly a minimal marker $j : J \pil \NN$ such that the monomial
$m^j = \prod_{p \in J} x_{\ov{p,j_p}}$ divides $m x_{\ov{q_0,b}}$.
Hence there are $s$ and $t$ in $P$ such that
$\ov{s,i_s} = \ov{p_0,a}$ and $\ov{t,j_t} = \ov{q_0,b}$. 
In $R$ we then get:
\[ \ovv{s,i_s} = \ovv{p_0,a} = \ovv{q_0,b} = \ovv{t,j_t}, \]
so $s = t$ would imply $i_t = j_t$ since $\phi$ has left strict chain fibers.
But then
\[ r_1 = \ov{p_0,a} = \ov{s,i_s} = \ov{t,j_t} = \ov{q_0,b} = r_2 \]
which is not so.
Assume then, say $s < t$.
Then $i_s \geq j_t$ since $\phi$ has left strict chain fibers, and so
\begin{equation}
\label{lig:regProofst}
i_t \geq i_s \geq j_t \geq j_s.
\end{equation}

In the following let $i_p = \infty$ if $p \not \in I$ and 
similarly $j_p = \infty$ if $p \not \in J$.
Form the monomials
\begin{itemize}
\item $m^i_{> s} =  \underset{p \in I, p > s} \prod x_{\ov{p,i_p}}$.
\item $m^i_{i > j} = \underset{p \in I, i_p > j_p, not \, (p > s)}{\prod} x_{\ov{p, i_p}}$.
\item $m^i_{i < j} = \underset{p \in I, i_p < j_p, not \, (p > s)}{\prod} x_{\ov{p, i_p}}$.
\item $m^i_{i = j} = \underset{p \in I, i_p = j_p, not \, (p > s)}{\prod} x_{\ov{p, i_p}}$.
\end{itemize}
Similarly we define $m^j_*$ for the various subscripts $*$. 
Then 
\[ m^i = m^i_{i=j} \cdot m^i_{i > j} \cdot m^i_{i < j} \cdot m^i_{>s} \]
divides $x_{\ov{s, i_s}} m$, and
\[ m^j = m^j_{i=j} \cdot m^j_{i > j} \cdot m^j_{i < j} \cdot m^j_{>s} \]
divides $x_{\ov{t, j_t}} m$.

Now let 
\[ \tm^j_{i > j}
= \underset{p \in I \cap J, i_p > j_p, not \, (p > s)}{\prod} x_{\ov{p, j_p}}, \]
which is the factor of $m^j_{i>j}$ where we take the product only over
$I \cap J$ and not over $J$.

There is now a map $\ell : I \pil \NN$ defined by
\[ \ell(p) = \begin{cases} i_p \, \mbox{ for } p \in I, p > s \\
                       \min(i_p, j_p) \, \mbox{ for } p \in I 
\mbox{ and not } (p > s) \\
\text{(recall that $j_p = \infty$ if $p \not \in J$)}
          \end{cases}.
\]
This is an isotone map as is easily checked. Its associated monomial
is 
\begin{equation} \label{eq:proof-ml}
 m^\ell = m_{i=j} \cdot \tm^j_{i > j} \cdot m^i_{i < j} \cdot m^i_{>s}. 
\end{equation}
We will show that this divides $m$. Since the marker $\ell$ is
$\leq$ the marker $i$, this will prove the theorem.

\medskip
\begin{claim} $\tm^j_{i> j}$ is relatively prime to 
$m^i_{i<j}$ and $m^i_{>s}$. 
\end{claim}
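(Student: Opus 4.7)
The plan is to verify that no variable of $\tm^j_{i>j}$ coincides with any variable of $m^i_{i<j}$ or of $m^i_{>s}$, by exploiting the left strict chain fibers hypothesis on $\phi$. Since $\phi = \eta \circ \phi^\prime$ and $\eta$ is a bijection off the two-element fiber $\{r_1,r_2\}$, the fibers of $\phi^\prime$ refine those of $\phi$, so $\phi^\prime$ inherits left strict chain fibers. Thus whenever $\ov{(p,u)} = \ov{(q,v)}$ in $R^\prime$ with $(p,u) \neq (q,v)$ in $S$, the two pairs lie in a common fiber of $\phi$, which forces $p \neq q$ and either $p > q$ in $P$ with $u \leq v$, or $p < q$ in $P$ with $u \geq v$.

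For $\tm^j_{i>j}$ versus $m^i_{i<j}$, suppose a shared variable arises from $p \in I \cap J$ with $i_p > j_p$ and $\neg(p > s)$, and from $q \in I$ with $i_q < j_q$ and $\neg(q > s)$, so that $\ov{(p,j_p)} = \ov{(q,i_q)}$. The case $p = q$ is excluded immediately, since it yields $i_p > j_p$ and $i_p < j_p$ simultaneously. If $p > q$ with $j_p \leq i_q$, then either $q \in J$, in which case isotonicity of the marker $j$ gives $j_q \leq j_p \leq i_q$, contradicting $i_q < j_q$, or $q \notin J$, in which case $J$ being a poset ideal containing $p$ and $q < p$ forces $q \in J$, again a contradiction. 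If instead $p < q$ with $j_p \geq i_q$, isotonicity of the marker $i$ on $I$ yields $i_p \leq i_q \leq j_p$, contradicting $i_p > j_p$.

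For $\tm^j_{i>j}$ versus $m^i_{>s}$, suppose $\ov{(p,j_p)} = \ov{(q,i_q)}$ with $p \in I \cap J$ satisfying $i_p > j_p$ and $\neg(p > s)$, and $q \in I$ satisfying $q > s$. Again $p = q$ is impossible, since then $\neg(p > s)$ and $q > s$ would contradict each other; $p > q$ combined with $q > s$ would give $p > s$, contradicting $\neg(p > s)$; and the remaining case $p < q$ with $j_p \geq i_q$ is killed exactly as before, via $i_p \leq i_q \leq j_p$ contradicting $i_p > j_p$.

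The only mildly delicate point I anticipate is the appeal to the poset ideal property of $J$ in the sub-case $q \notin J$ of the first argument; the rest of the proof is a direct unwinding of the left strict chain fiber hypothesis together with the isotonicity of the two markers $i$ and $j$.
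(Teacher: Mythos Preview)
Your proof is correct and follows essentially the same route as the paper's: use the left strict chain fiber hypothesis to compare $p$ and $q$, then invoke isotonicity of the markers $i$ and $j$ to derive a contradiction in each case. You are slightly more explicit than the paper in two places: you handle $p=q$ separately (the paper silently absorbs this into the comparability dichotomy), and you justify $q\in J$ via the poset-ideal property of $J$ in the sub-case $q<p$ (the paper writes $j_p\geq j_q$ without comment, implicitly using that $p\in J$ and $q<p$ force $q\in J$). Both additions are correct and harmless.
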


\begin{proof}
Let $x_{\ov{p, j_p}}$ be in $\tm^j_{i > j}$. 

1. Suppose it equals the variable
$x_{\ov{q, i_q}}$ in $m^i_{ i < j}$. Then $p$ and $q$ are comparable
since $\phi$ has left strict chain fibers.
If $p < q$ then $j_p \geq i_q \geq i_p$, contradicting $i_p > j_p$. 
If $q < p$ then $i_q \geq j_p \geq j_q$ contradicting $i_q < j_q$.

2 Suppose $x_{\ov{p, j_p}}$ equals $x_{\ov{q, i_q}}$ in $m^i_{> s}$. 
Then $p$ and $q$ are comparable and so $p < q$ since $q > s$ and we do not
have $p > s$. Then $j_p \geq i_q \geq i_p$ contradicting $i_p > j_p$. 
\end{proof}

\begin{claim} \label{claim:proofLcm}
$m_\ell$ divides $m x_{\ov{s,i_s}}$.
\end{claim}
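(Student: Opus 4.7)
The plan is to verify the divisibility $m^\ell \mid m \cdot x_{r_1}$ by comparing, for each variable $x_u$ of $\kr[x_{R^\prime}]$, the multiplicity $\nu_u(m^\ell)$ with $\nu_u(m \cdot x_{r_1}) = \nu_u(m) + \delta_{u, r_1}$. The basic observation is that $m^i_{i=j} = m^j_{i=j}$: both products range over the common index set $\{p \in I \cap J : i_p = j_p,\ \text{not}\ p > s\}$, and the corresponding variables $x_{\ov{p, i_p}}$ and $x_{\ov{p, j_p}}$ coincide. Combined with \eqref{eq:proof-ml} this yields the key identity
\[
\nu_u(m^\ell) \;=\; \nu_u(m^i) \;-\; \nu_u(m^i_{i>j}) \;+\; \nu_u(\tm^j_{i>j}).
\]

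I would then split into two cases. In Case~(A), where $\nu_u(\tm^j_{i>j}) \le \nu_u(m^i_{i>j})$, the identity immediately gives $\nu_u(m^\ell) \le \nu_u(m^i) \le \nu_u(m \cdot x_{r_1})$, using $m^i \mid m \cdot x_{r_1}$. In Case~(B), with the opposite strict inequality, Claim~1 (coprimality of $\tm^j_{i>j}$ with $m^i_{i<j}$ and $m^i_{>s}$) forces $\nu_u(m^i_{i<j}) = \nu_u(m^i_{>s}) = 0$; applying $m^i_{i=j} = m^j_{i=j}$ again,
\[
\nu_u(m^\ell) \;=\; \nu_u(m^j_{i=j}) + \nu_u(\tm^j_{i>j}) \;\le\; \nu_u(m^j) \;\le\; \nu_u(m \cdot x_{r_2}) \;=\; \nu_u(m) + \delta_{u, r_2}.
\]
For $u \neq r_2$, this already gives $\nu_u(m^\ell) \le \nu_u(m) \le \nu_u(m \cdot x_{r_1})$, finishing Case~(B) at all such $u$.

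The main obstacle, and the only remaining subcase, is $u = r_2$ in Case~(B). I would eliminate it by proving $\nu_{r_2}(\tm^j_{i>j}) = 0$, so that Case~(B) cannot occur there. Suppose $p$ is a contributor: then $p \in I \cap J$, $i_p > j_p$, $p$ is not $>s$, and $(p, j_p) \in R_2$. Consequently $(p, j_p)$ and $(s, i_s) \in R_1$ are two distinct elements of the fiber $\phi^{-1}(r)$, which by the left strict chain property of $\phi$ is a chain in $P^{\op} \times \NN$ with distinct first coordinates. This forces $p \neq s$ and $p$ comparable to $s$ in $P$; combined with the constraint that $p$ is not $>s$, we get $p <_P s$, and comparability of $(p, j_p)$ and $(s, i_s)$ in $P^{\op} \times \NN$ then forces $j_p \ge i_s$. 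But $p, s \in I$ with $p <_P s$, so isotonicity of the marker $i$ yields $i_p \le i_s \le j_p$, contradicting $i_p > j_p$. Hence no such $p$ exists, reducing $u = r_2$ to Case~(A) and completing the multiplicity check at every variable.
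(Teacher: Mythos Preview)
Your proof is correct. At its core it is the variable-by-variable unpacking of the paper's lcm argument: the paper sets $a=m^i_{i=j}=m^j_{i=j}$, $bc=m^i_{i<j}\cdot m^i_{>s}$, $b'=\tm^j_{i>j}$, observes that $abc\mid m x_{\ov{s,i_s}}$ and $ab'\mid m$, and invokes Claim~1 to conclude $m^\ell=abb'c$ divides their lcm. Your Cases (A) and (B) are precisely the two alternatives in computing $\max(\nu_u(abc),\nu_u(ab'))$.

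The one genuine difference is how the variable $x_{r_2}$ is dealt with. The paper simply notes that since $t>s$ the factor $x_{\ov{t,j_t}}=x_{r_2}$ sits inside $m^j_{>s}$, so $ab'=m^j_{i=j}\cdot\tm^j_{i>j}$ already divides $m$ (not just $m x_{r_2}$), and no special case is needed. You instead prove the sharper statement $\nu_{r_2}(\tm^j_{i>j})=0$ via the left-strict-chain-fiber structure of $\phi^{-1}(r)$. Your argument is valid (indeed $(s,i_s)=(p_0,a)$ since $\phi'^{-1}(r_1)$ is a singleton, so your two fiber elements are genuinely distinct), but it is more work than necessary: the paper's observation that $t>s$ immediately places the troublesome $x_{r_2}$ in $m^j_{>s}$, away from $ab'$, and finishes the claim in one line.
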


\begin{proof}
Let $abc =  m^i_{i=j} \cdot m^i_{i < j} \cdot m^i_{>s}$ which divides
$m x_{\ov{s,i_s}}$ and $a b^\prime = m^j_{i=j} \cdot \tm^j_{i > j}$
which divides $m$ since $x_{\ov{t,j_t}}$ is a factor of $m^j_{>s}$
since $t > s$. 
Now if the product of monomials $abc$ divides the monomial $n$ 
and $ab^\prime$ also divides $n$, and $b^\prime$ is relatively prime to $bc$,
then the least common multiple $abb^\prime c$ divides $n$.
We thus see that the monomial associated to the isotone map $\ell$
\[ m^\ell = m_{i=j} \cdot \tm^j_{i > j} \cdot m^i_{i < j} \cdot m^i_{>s} \]
divides $m x_{\ov{s,i_s}}$.
\end{proof}
 
We need now only show that the variable $x_{\ov{s,i_s}}$ occurs to
a power in the above  product \eqref{eq:proof-ml} 
for $m^\ell$ less than or equal to that of its
power in $m$. 

\begin{claim} $x_{\ov{s,i_s}}$ is not a factor of
$\tm^j_{i > j}$ or $m^i_{i < j}$. 
\end{claim}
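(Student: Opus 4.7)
The key structural fact to exploit is that $r_1 \in R^\prime$ has a singleton preimage under $\phi^\prime$, namely $\phi^{\prime -1}(r_1) = R_1 = \{(p_0,a)\}$. Combined with the earlier identification $\ov{s,i_s} = \ov{p_0,a} = r_1$, this forces the equality $(s,i_s) = (p_0,a)$ in $S$. So any other element $(p,k) \in S$ with $\ov{p,k} = r_1$ must in fact coincide with $(s,i_s)$. This collapses both sub-claims to a check for the single case $p = s$, and that case is ruled out by the inequality $i_s \geq j_s$ proved in \eqref{lig:regProofst}.

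First I will handle the factor $m^i_{i<j}$. Assume for contradiction that $x_{\ov{s,i_s}}$ divides $m^i_{i<j}$. Then there exists $p \in I$ with $i_p < j_p$ and $\neg(p>s)$ such that $x_{\ov{p,i_p}} = x_{\ov{s,i_s}}$. This means $\phi^\prime(p,i_p) = r_1$, so $(p,i_p) \in \phi^{\prime -1}(r_1) = \{(p_0,a)\} = \{(s,i_s)\}$, giving $p = s$ and $i_p = i_s$. The defining condition $i_p < j_p$ then reads $i_s < j_s$, which contradicts \eqref{lig:regProofst}.

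Next I will handle $\tm^j_{i>j}$. Suppose $x_{\ov{p,j_p}} = x_{\ov{s,i_s}}$ for some $p \in I \cap J$ with $i_p > j_p$ and $\neg(p>s)$. The same singleton-preimage argument gives $(p,j_p) = (s,i_s)$, hence $p = s$ and $j_p = i_s$. Substituting into $i_p > j_p$ yields $i_s > i_s$, which is absurd.

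The main (and only) conceptual obstacle is noticing that the singleton structure of $\phi^{\prime -1}(r_1)$ reduces both statements to the case $p = s$; once this is in hand the argument is a direct substitution using \eqref{lig:regProofst}. All the nontrivial geometric input (the left strict chain fiber property of $\phi$, minimality of the markers $i,j$, the choice of $s,t$ with $s<t$, and the inequality $i_t \geq i_s \geq j_t \geq j_s$) was already invested upstream in producing \eqref{lig:regProofst}; this final claim is the bookkeeping step that allows Claim \ref{claim:proofLcm} to be strengthened from ``$m^\ell$ divides $m \cdot x_{\ov{s,i_s}}$'' to ``$m^\ell$ divides $m$,'' which in turn contradicts the assumed nonvanishing of $m$ in $\kr[x_{R^\prime}]/L^{\phi^\prime}(P,\cJ)$.
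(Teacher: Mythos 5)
Your proof is correct, and it takes a genuinely shorter route than the paper's. You exploit the fact, set up at the start of the proof of Theorem \ref{thm:regCOLP}, that $\phi^{\prime -1}(r_1) = \{(p_0,a)\}$ is a singleton, so $\ov{s,i_s}=\ov{p_0,a}=r_1$ forces $(s,i_s)=(p_0,a)$, and any $(p,k)$ with $\ov{p,k}=r_1$ must also equal $(s,i_s)$. This collapses both assertions to the case $p=s$, which is killed by $i_s\geq j_s$ from \eqref{lig:regProofst}. The paper's proof instead argues entirely from the left strict chain fiber property of $\phi$ (the composite map to $R$): given $\ov{s,i_s}=\ov{p,i_p}$ (resp.\ $\ov{s,i_s}=\ov{p,j_p}$) with $\neg(p>s)$, it deduces $p\leq s$ from comparability in the fiber, then combines isotonicity of the markers $i,j$ with the chain-fiber inequalities to derive a contradiction, never invoking the singleton structure of $\phi^{\prime -1}(r_1)$. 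Your argument is cleaner in the specific setting at hand; the paper's argument is more self-contained in that it would still go through if the induction step had been organized with a non-singleton $R_1$, and it runs parallel to the analogous computations in the unnamed relative-primeness claim and Claim \ref{ProofClaimLik}, where the singleton shortcut is not available (those involve generic indices $p,q$, not $r_1$). Both are valid; you have found a legitimate simplification of this particular step.
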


\begin{proof}
1. Suppose $\ov{s, i_s} = \ov{p, i_p}$ where $i_p < j_p$ 
and not $p > s$. Since $p$ and $s$ are comparable 
(they are both in a fiber of $\phi$), we have $p \leq s$.
Since $\phi$ is isotone $i_p \leq i_s$ and since $\phi$ has
left strict chain fibers $i_p \geq i_s$. Hence $i_p = i_s$.
By (\ref{lig:regProofst}) $j_s \leq i_s$ and so 
$j_p \leq j_s \leq i_s = i_p$. This contradicts $i_p < j_p$. 

2. Suppose $\ov{s,i_s} = \ov{p, j_p}$ where $j_p < i_p$ and not $p > s$.
Then again $p \leq s$ and $i_p \leq i_s \leq j_p$, giving a contradiction.
\end{proof}

If now $i_s > j_s$ then $x_{\ov{s, i_s}}$ is a factor in
$m^i_{i>j}$ but by the above, not in $\tm^j_{i>j}$. Since $m^\ell$ is obtained from 
$m^i$ by replacing
$m^i_{i > j}$ with $\tm^j_{i > j}$, we see that $m^\ell$ contains
a lower power of $x_{\ov{s,i_s}}$ than $m^i$ and so $m^\ell$ divides $m$.


\begin{claim} \label{ProofClaimLik} Suppose $i_s = j_s$. Then the power of
$x_{\ov{s,i_s}}$ in $m^i_{ >s}$ is less than or equal to its power
in $m^j_{> s}$. 
\end{claim}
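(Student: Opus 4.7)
My plan is to show that both sides of the claimed inequality actually vanish, so the statement holds trivially as $0 \leq 0$. The content is entirely in pinning down $(s,i_s)$ as the unique element of the fiber $R_1$.

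First I would identify $(s,i_s)$ with $(p_0,a)$. By the way $s$ was introduced in the proof we have $\ov{s,i_s} = \ov{p_0,a} = r_1$, which means $(s,i_s) \in \phi^{\prime -1}(r_1) = R_1$. Since $R_1 = \{(p_0,a)\}$ is a singleton, this forces $(s,i_s) = (p_0,a)$, so we may rewrite $R_1 = \{(s,i_s)\}$.

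Next I would analyze the power of $x_{\ov{s,i_s}}$ in $m^i_{>s} = \prod_{p \in I,\, p > s} x_{\ov{p,i_p}}$. A factor $x_{\ov{p,i_p}}$ coincides with $x_{\ov{s,i_s}}$ precisely when $\phi^\prime(p,i_p) = r_1$, i.e.\ when $(p,i_p) \in R_1 = \{(s,i_s)\}$, forcing $p = s$. But the product is taken over $p > s$, so no such factor occurs and the power is $0$. The same reasoning, combined with the hypothesis $i_s = j_s$ to identify $x_{\ov{s,j_s}}$ with $x_{\ov{s,i_s}}$, shows that the corresponding power in $m^j_{>s}$ is also $0$: any $p \in J$ with $p > s$ and $\ov{p,j_p} = \ov{s,i_s}$ would satisfy $(p,j_p) \in R_1 = \{(s,i_s)\}$, again forcing $p = s$.

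With both powers equal to zero, the inequality is immediate. There is no real obstacle here; the only conceptual step is recognizing that the singleton structure of $R_1$, together with the definition of $s$, reduces the required comparison to a one-line fiber computation. Looking ahead, this Claim is the bookkeeping step that lets the $i_s = j_s$ case of the surrounding proof be closed by comparing $m^\ell$ with $m^j$, using that $\ov{t,j_t} = r_2 \neq r_1 = \ov{s,i_s}$ so that $m^j$ divides $m$ with the full power of $x_{\ov{s,i_s}}$ needed.
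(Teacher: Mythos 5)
Your proof is correct and takes a genuinely different, more elementary route than the paper's. The paper argues at the level of $\phi : S \pil R$: it supposes a factor $x_{\ov{p,i_p}}$ of $m^i_{>s}$ equals $x_{\ov{s,i_s}}$ (so $\ovv{p,i_p}=\ovv{s,i_s}=\ovv{t,j_t}$), splits into the cases $s < p < t$ and $s < t \leq p$, rules out the second, and in the first uses left strict chain fibers together with $i_s=j_s$ to force $i_p = j_p$, so the same factor appears in $m^j_{>s}$ — a delicate comparison. You instead work one level down, at $\phi' : S \pil R'$: since $(s,i_s)\in \Gamma i \sus \Supp(\cJ)\sus S$ and $\ov{s,i_s}=r_1$, the singleton fiber $R_1 = \phi'^{-1}(r_1)=\{(p_0,a)\}$ pins down $(s,i_s)=(p_0,a)$, and then the same singleton structure shows no $p>s$ in $I$ or $J$ can contribute $x_{\ov{s,i_s}}$, so both powers in question vanish and the inequality is $0\leq 0$. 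Your observation is sound: $\Gamma i$ lies in $S$ because $i$ is a minimal marker and the generators of $L^{\phi'}(P,\cJ)$ are exactly the $\phi'$-images of graphs of minimal markers. This is cleaner, exposes that the paper's case analysis in this particular claim is vacuous, and slightly simplifies the surrounding chain of inequalities (power in $m^\ell \leq$ power in $m_{i=j} \leq$ power in $m^j \leq$ power in $m$) without altering the conclusion.
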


\begin{proof}
Suppose $\ov{s, i_s} = \ov{p, i_p}$ where $p > s$. 
We will show that then $i_p = j_p$. This will prove the claim.

The above implies $\ovv{p, i_p} = \ovv{s, i_s} = \ovv{t, j_t}$,
so either $s < p < t$ or $s < t \leq p$.
If the latter holds, then since $\phi$ has left strict chain fibers,
$i_s \geq j_t \geq i_p$ and also $i_s \leq i_p$ by isotonicity, 
and so $i_s = i_p = j_t$. Thus
\[ \ov{s,i_s} \leq \ov{t, j_t} \leq \ov{p, i_p} \]
and since the extremes are equal, all three are equal contradicting
the assumption that the two first are unequal.

Hence $s < p < t$. By assumption on the fibre of $\phi$ we have 
$i_s \geq i_p$ and by isotonicity $i_s \leq i_p$ and so $i_s = i_p$.
Also by (\ref{lig:regProofst}) and isotonicity
\[ i_s \geq j_t \geq j_p \geq j_s .\]
By assumption $i_s = j_s$, and we get equalities everywhere and so $i_p = j_p$,
as we wanted to prove. 
\end{proof}

By \eqref{lig:regProofst} we know that $i_s \geq j_s$. 
In case $i_s > j_s$ we have shown before Claim \ref{ProofClaimLik} 
that $m^\ell$ divides $m$.
So suppose $i_s = j_s$. By the above two claims, the $x_{\ov{s,i_s}}$ 
in $m^\ell$ occurs only in $m_{i=j} \cdot m^i_{ > s}$ and to 
a power less than or equal to that in $m_{i = j} \cdot m^j_{>s}$. 
Since $m^j$ divides $m x_{\ov{t,j_t}}$ and 
$\ov{s,i_s} \neq \ov{t, j_t}$ the power of $x_{\ov{s,i_s}}$
in $m^j$ is less than or equal to its power in $m$. 
Hence the power of $x_{\ov{s,i_s}}$ in $m^\ell$ is less or
equal to its power in $m$ and so by Claim \ref{claim:proofLcm} 
$m^\ell$ divides $m$.

\end{proof}

\bibliographystyle{amsplain}
\bibliography{Bibliography}

\end{document}